 \def\@textbottom{\vskip \z@ \@plus 17pt}
 \let\@texttop\relax
\newtheorem{thm}{Theorem}[section]
\newtheorem{cor}[thm]{Corollary}
\newtheorem{lem}[thm]{Lemma}
\newtheorem{prop}[thm]{Proposition}
\theoremstyle{definition}
\newtheorem{defn}[thm]{Definition}%
\theoremstyle{remark}
\newtheorem{rem}[thm]{Remark} 
\theoremstyle{plain}
\numberwithin{equation}{section}
\newenvironment{tenumerate}[1][]
  {\enumerate[label=\textup(\alph*\textup),ref=(\alph*),#1]}
  {\endenumerate}
\renewcommand{\glossarysection}[2][]{} %
\DeclarePairedDelimiter{\abs}{\lvert}{\rvert}
\DeclarePairedDelimiter{\norm}{\lVert}{\rVert}
\DeclarePairedDelimiter{\bra}{(}{)}
\DeclarePairedDelimiter{\pra}{[}{]}
\DeclarePairedDelimiter{\set}{\{}{\}}
\DeclarePairedDelimiter{\skp}{\langle}{\rangle}
\DeclareMathAlphabet{\mathup}{OT1}{\familydefault}{m}{n}
\newcommand{\dx}[1]{\mathop{}\!\mathup{d} #1}
\newcommand{\pderiv}[3][]{\frac{\mathop{}\!\mathup{d}^{#1} #2}{\mathop{}\!\mathup{d} #3^{#1}}}
\newcommand{\cA}{\ensuremath{\mathcal A}}
\newcommand{\cD}{\ensuremath{\mathcal D}}
\newcommand{\cE}{\ensuremath{\mathcal E}}
\newcommand{\cG}{\ensuremath{\mathcal G}}
\newcommand{\cL}{\ensuremath{\mathcal L}}
\newcommand{\cM}{\ensuremath{\mathcal M}}
\newcommand{\cP}{\ensuremath{\mathcal P}}
\newcommand{\GCE}{\ensuremath{\mathrm {CE}}}
\newcommand{\cW}{\ensuremath{\mathcal W}}
\newcommand{\Leb}{\ensuremath{{L}}}
\DeclareMathOperator*{\argmin}{argmin}
\newcommand{\N}{{\mathbb N}}
\newcommand{\R}{{\mathbb R}}
\newcommand{\eps}{{\varepsilon}}
\newcommand{\ft}{{f_t}}
\newcommand{\Rdiag}{\R^2_{\!\scriptscriptstyle\diagup}}
\newcommand{\cm}{\ensuremath{\mathrm{cm}}}
\newcommand{\loc}{\ensuremath{\mathrm{loc}}}
\newcommand{\Lip}{\ensuremath{\mathrm{Lip}}}
\DeclareMathOperator{\supp}{supp}
\DeclareMathOperator{\AC}{AC}
\newcommand{\ds}{\displaystyle}
\renewcommand{\tilde}{\widetilde}
\renewcommand{\bar}{\overline}
\renewcommand{\eps}{\varepsilon}
\def\XXint#1#2#3{{\setbox0=\hbox{$#1{#2#3}{\int}$}
     \vcentre{\hbox{$#2#3$}}\kern-.5\wd0}}
\title[On a novel gradient flow structure for the aggregation equation]{On a novel gradient flow structure for the aggregation equation}
\author{A. Esposito, R. S. Gvalani, A. Schlichting, and M. Schmidtchen}
\address{Antonio Esposito -- Mathematical Institute, University of Oxford, Woodstock Road, Oxford, OX2 6GG, United Kingdom.}
\address{Rishabh S. Gvalani -- Max-Planck-Institut für Mathematik in den Naturwissenschaften, Inselstraße 22, 04103 Leipzig, Germany.}
\address{Andr\'e Schlichting -- Institute for Analysis and Numerics, University of Münster, Orléans-Ring 10, 48149 Münster, Germany}
\address{Markus Schmidtchen -- Institute For Scientific Computing, Technische Universit\"at Dresden, Zellescher Weg 12-14, 01069 Dresden, Germany.}
\email{antonio.esposito@maths.ox.ac.uk}
\email{gvalani@mis.mpg.de}
\email{a.schlichting@uni-muenster.de}
\email{markus.schmidtchen@tu-dresden.de}
\begin{document}
\begin{abstract}
    The aggregation equation arises naturally in kinetic theory in the study of granular media, and its interpretation as a 2-Wasserstein gradient flow for the nonlocal interaction energy is well-known. Starting from the spatially homogeneous inelastic Boltzmann equation, a formal Taylor expansion reveals a link between this equation and the aggregation equation with an appropriately chosen interaction potential. Inspired by this formal link and the fact that the associated aggregation equation also dissipates the kinetic energy, we present a novel way of interpreting the aggregation equation as a gradient flow, in the sense of curves of maximal slope, of the kinetic energy, rather than the usual interaction energy, with respect to an appropriately constructed transportation metric on the space of probability measures. 
\end{abstract}

\keywords{aggregation equation, gradient flow, inelastic collisions, inelastic Boltzmann, kinetic energy, nonlocal transport distance}

\subjclass[2020]{35A01, 35A15, 35Q20, 35Q70, 82C22}

\maketitle

\section{Introduction}

In this work we propose a novel, rigorous interpretation of the one-dimensional aggregation equation
\begin{equation}\label{eq:initial_eq_intro}
 \partial_t f_t = \partial_v \bra*{\ft\, \partial_v W* \ft},\quad  W(v):=c \abs{v}^3,
\end{equation}
  where the probability measure $f_t$ describes the distribution of velocities of the system at time $t>0$. Here,  $c>0$ is some constant to be specified later. Equation~\eqref{eq:initial_eq_intro} has been considered in \cite{BCP97,BenedettoCagliotiCarrilloPulvirenti1998,CMcCV03,LiToscani2004} as a kinetic model for the evolution of a granular medium undergoing inelastic collisions. As we shall see in \cref{subsec:derivation-aggr-eq}, such an equation can, indeed, be formally derived from the inelastic and spatially homogeneous Boltzmann equation.

More recently, equation~\eqref{eq:initial_eq_intro} has been studied as a nonlocal interaction equation with an attractive interaction kernel in, e.g.,~\cite{CDFLS11,BLR11} and references therein, which can be obtained as the mean-field limit of a set of interacting particles,~\cite{CCH14}, or as a zero inertia limit~\cite{FetSun_zeroinlimit2015}. In this context, the interaction between individuals is described in terms of their relative positions rather than their relative velocities (i.e., relabelling `$v$' by `$x$' in~\eqref{eq:initial_eq_intro}). Moreover, it is well-known that the nonlocal interaction equation can be viewed as a $2$-Wasserstein gradient flow of the nonlocal interaction energy,~\cite{AGS08}.

This paper focuses on the kinetic description provided in \cite{BCP97}. We show that~\eqref{eq:initial_eq_intro} is a gradient flow of the kinetic energy with respect to a metric that can be understood as a generalisation of the $2$-Wasserstein distance, inspired by the approach in~\cite{DolbeaultNazaretSavare2009,ErbarBoltz} and motivated by the formal link with the inelastic Boltzmann equation.

In recent years, gradient flow structures have been proposed for several kinetic equations: for the homogeneous (elastic) Boltzmann equation~\cite{ErbarBoltz}, the linear Boltzmann equation~\cite{BasileBenedettoBertini2020}, and the homogeneous Landau equation~\cite{carrillo2020landau,AnYing2021}. See also~\cite{AguehCarlier2016} for a different gradient flow description of the inhomogeneous granular medium equation. Recently, the authors of~\cite{carrillo2021boltzmannlandau} made a connection between the gradient flow structures of the (homogeneous) Boltzmann and Landau equations. These results indicate that an appropriate gradient flow structure can link the inelastic Boltzmann equation and the aggregation equation.

In the remainder of the introduction, we give a formal sketch of the main ideas and the intuition behind our approach, with the inelastic spatially homogeneous Boltzmann equation acting as the starting point of our discussions. We commence by introducing some necessary notation and  other preliminary notions in Section~\ref{sec:prelim}. Then, in Section~\ref{sec:derivboltzmann}, we discuss the inelastic homogeneous Boltzmann equation. Moreover, we propose a formal gradient flow structure for this equation with the kinetic energy  as the natural energy functional. This is important in order to draw the connection with the aggregation equation \eqref{eq:initial_eq_intro}, via a formal Taylor expansion which we describe in Section~\ref{subsec:derivation-aggr-eq}. As a consequence, we can obtain the gradient flow structure of equation \eqref{eq:initial_eq_intro} in Section~\ref{ss:GF:structure:Agg}. We conclude the introduction in Section~\ref{ss:outline:results} with a discussion of the main results and an outline of the rest of the manuscript.

\subsection{Notation and preliminaries}\label{sec:prelim}
We use the notation $\Rdiag$ to denote the set $ \set{\bra{v,v_*}\in \R^2: v \neq v_*}$. This set often acts as our state space since it is impossible for particles to collide if they move at the same velocity and in the same direction. Furthermore, we denote by $L^p(\Omega,\mu)$, $p \geq 1$, the Lebesgue spaces on some measure space $(\Omega,\mu)$ and by $L^p(\Omega)$, $p \geq 1$, the standard Lebesgue spaces when $\Omega$ is a smooth Euclidean subdomain\footnote{In all of our applications $\Omega \in \set{\R, \Rdiag, [0,T]\times \Rdiag}$, and so all Borel measures are Radon measures.} and $\mu$ is the Lebesgue measure. In the same setting, we denote by $C^k(\Omega)$ the space of $k$-times continuously differentiable real-valued functions on $\Omega$ and $C_c^k(\Omega)$ (resp. $C_0^k(\Omega)$, $C_b^k(\Omega)$) the subspace of $C^k(\Omega)$ functions that are compactly supported (resp. vanishing at infinity, with bounded derivatives up to order $k$). 

We denote by $\cP(\Omega)$ the set of Borel probability measures on $\Omega$, and we write $\cM(\Omega)$ (resp. $\cM^+(\Omega)$) to denote finite (resp. non-negative) Radon measures on $\Omega$, where $\Omega$ is some Euclidean subdomain. Besides, for $p\geq 1$, we denote by 
\begin{equation}\label{eq:def:P1}
\begin{split}
    \cP_p(\Omega) &= \set*{ f \in \cP(\Omega) : m_p(f):=\!\int_\Omega |v|^p\dx f(v) < \infty}, \\
    \cP_p^{\cm}(\Omega) &= \set*{ f\in \cP_p(\Omega):  \!\int_\Omega v \dx f (v) = 0  } .
\end{split}    
\end{equation} 
Additionally, will denote by $d_p$, $p\geq 1$, the $p$-Wasserstein distance, \cite{V2}.
For two sequences, $\set{f_n}_n \subset \cP(\Omega)$ and $\set{U_n}_n \subset \cM(\Omega)$ as well as two elements $f \in \cP(\Omega)$ and $U\in\cM(\Omega)$, we write $f_n \to f \in \cP(\Omega)$ if, by duality with continuous and bounded functions, $g\in C_b(\Omega)$, there holds 
\begin{align}
  \int_{\Omega} g \dx f_n \to \int_{\Omega} g \dx f ,
\end{align}
as $n\to \infty$. In this case, we say $\set{f_n}_n$ converges \emph{narrowly} or \emph{weakly} to $f$. Moreover, we write $U_n \to U$ in $\cM(\Omega)$ if, by duality with continuous functions that vanish at infinity, $g\in C_0(\Omega)$, there holds
\begin{align}
  \int_{\Omega} g \dx U_n \to \int_{\Omega} g \dx U ,
\end{align}
as $n\to \infty$. When satisfied, we say $\set{U_n}_n$ converges \emph{weakly-$^\ast$} to $U$.

Likewise, we write $U_n \to^c U$ in $\cM(\Omega)$ if, by duality with continuous functions with compact support, $g\in C_c(\Omega)$, there holds
\begin{align}
  \int_{\Omega} g \dx U_n  \to \int_{\Omega} g \dx U ,
\end{align}
as $n\to \infty$. In this case, the induced topology is the \emph{vague topology}.

\subsection{The inelastic Boltzmann equation \& decay of the kinetic energy}\label{sec:derivboltzmann}

We consider the time evolution of the velocity distribution, $f_t $, of a system of particles that undergo inelastic collisions with coefficient of restitution $e \in [0,1)$. Throughout this paper, we shall denote by  $v,v_*$,  the pre-collisional velocities and by $v',v'_*$, the post-collisional velocities, respectively, which  can be computed using the following two laws: the reduction of the relative velocity of the particles due to the inelastic collisions
\begin{align}
    v'-v_*' &= -e(v-v_*),
\end{align}
and the conservation of momentum, i.e.,
\begin{align}
    v' + v'_* &= v+ v_*.
\end{align}
 The limit $e\to1$ corresponds to elastic collisions, while $e=0$ models  sticky collisions. Solving for the post-collisional velocities, $v',v'_*$, we obtain
\begin{align}
\left\{
\begin{array}{rl}
v'   \!\!\! &= \dfrac{1- e}{2}v + \dfrac{1+ e}{2}v_*, \\[1em]
v'_* \!\!\! &= \dfrac{1+ e}{2}v + \dfrac{1- e}{2}v_*.
\end{array}
\right.
\end{align}

We now define the weak form of the Boltzmann equation. We refer to the appendix for a formal derivation of the equation from a simple gain-loss argument.
\begin{defn}[Nonlocal gradient and weak form for the inelastic Boltzmann equation]
  We define the nonlocal gradient of a function $\varphi\in C^0(\R)$ as follows
  \begin{align}\label{e:def:nabla:bar}
    \bar{\nabla} \varphi(v,v_*) := \frac{\varphi(v')+ \varphi(v'_*) -\varphi(v) -\varphi(v_*)}{\abs{v-v_*}^2}(v-v_*),
  \end{align}
  for $(v, v_*)\in\Rdiag$, i.e., $\bar{\nabla} \varphi(v,v_*) : \Rdiag \to \R$. A curve $f : [0,T] \to \cP(\R)$ is a weak solution of the inelastic Boltzmann equation with collision kernel $\sigma=\sigma(|v|)$ provided that for all $\varphi \in C_c^\infty(\R)$ and almost all $t\in [0,T]$, it holds
\begin{align}
  \label{eq:WeakFormBoltzmann}
  \skp*{\varphi, \partial_t \ft } = \frac{1}{2}\iint_{\Rdiag} \sigma( \abs{v-v_*})  \bra*{v-v_*}\bar{\nabla} \varphi \dx\ft(v) \dx\ft(v_*).
\end{align}
\end{defn}
The choice of~\eqref{e:def:nabla:bar} is made such that it has the units of inverse velocity and such that it generalises to higher dimensions in a straightforward manner.
By considering its negative adjoint in the weighted space $L^2(\Rdiag,\sigma)$, we obtain a divergence acting on nonlocal fluxes $U\in \cM(\Rdiag)$ such that
\begin{align}\label{eq:Boltz:div}
  \int \varphi(v) \dx(\bar\nabla \cdot U)(v) = -\frac{1}{2} \iint \sigma(\abs{v-v_*}) \bar\nabla \varphi(v,v_*)  \dx{U(v,v_*)} . 
\end{align}
In this sense, we obtain that the weak form~\eqref{eq:WeakFormBoltzmann} can be cast into the form of a nonlocal continuity equation
$$
    \partial_t f_t + \bar \nabla \cdot U_t = 0,
$$
where the associated flux, $U_t$, is given by 
\begin{align}
    \label{eq:Boltz:flux}
  \dx U_t(v,v_*) = (v-v_*) \dx f_t(v) \dx f_t(v_*) .
\end{align}

\subsubsection*{Decay of the kinetic energy}
For a given velocity distribution, $f$, we define the kinetic energy as follows
\begin{align}
    \label{eq:KineticEnergy}
    \cE(f) := \frac12 \int_\R v^2 \dx f(v) \, .
\end{align}
Due to the fact that collisions between particles are inelastic, one would expect that the post-collisional kinetic energy is less than the pre-collisional energy. In fact, one can see that the post-collisional kinetic energy is related to the pre-collisional kinetic energy  via
\begin{equation}
    \abs{v'}^2 +\abs{v'_*}^2 = \frac{1+ e^2}{2}(\abs{v}^2 + \abs{v_*}^2) + (1-e^2)v v_*  ,
    \label{eq:el}
\end{equation}
for $e \in [0,1)$. We now use the weak formulation,~\eqref{eq:WeakFormBoltzmann}, to show that the kinetic energy decays along a solution of the
inelastic Boltzmann equation. By noting that,  $\frac{\delta\cE}{\delta f} = \frac{1}{2} v^2$, we use~\eqref{eq:el} to obtain
\begin{align}\label{eq:barnablav2}
  (v-v_*)\bar\nabla\bra*{ \abs{v}^2}&=   \abs{v'}^2 +\abs{v'_*}^2 - \abs{v}^2 - \abs{v_*}^2 = - \frac{1-e^2}{2} \abs{v-v_*}^2,
\end{align}
which, upon substituting $\varphi =\frac{\delta\cE}{\delta f} $ into~\eqref{eq:WeakFormBoltzmann}, yields
\begin{align}
    \frac{\dx{}}{\dx{t}}\cE(\ft) 
    &= -\frac{1-e^2}{8}\iint_{\Rdiag}  \sigma( \abs{v-v_*})\abs{v-v_*}^2  \dx (\ft \otimes \ft)(v, v_*) \leq 0 \, .
\end{align}
For the specific case of Maxwell molecules, that is $\sigma(|a|)=|a|$, we have
\begin{align}\label{e:Boltz:dissipation}
    \frac{\dx{}}{\dx{t}}\cE(\ft) &= -\frac{1-e^2}{8}\iint_{\Rdiag}  \abs{v-v_*}^3  \dx(\ft \otimes \ft)(v,v_*).
\end{align}
\begin{rem}
From~\eqref{e:Boltz:dissipation}, we can heuristically obtain Haff's law by considering the evolution of a family local equilibria $m_\eta$ such that $\cE(m_\eta)= \eta$. One then obtains an equation for $\eta$ of the form
\begin{align}
 \pderiv{}{t}\eta(t) = \pderiv{}{t} \cE(m_{\eta(t)})\lesssim -\eta(t)^{\frac{3}{2}},
\end{align}
which leads to
\begin{align}
  \eta(t) \lesssim \frac{1}{1+t^2}. 
\end{align}
Hence, the solutions converge on an algebraic time scale to a Dirac measure. A rigorous proof of this convergence can be found in~\cite{MischlerMouhotRicard2006,MischlerMouhot2006}. From the decay of the kinetic energy in~\eqref{e:Boltz:dissipation}, it becomes, indeed, clear that the system loses kinetic energy in the long run, i.e., it cools down. This leads to the formation of a Dirac measure as time goes to infinity, which is at the same time a minimiser of~\eqref{eq:KineticEnergy} in the space of probability measures with a fixed centre of mass. Hence, the only stationary states of the system are Dirac measures.
\end{rem}

\subsubsection*{Identification of a novel gradient structure}
\label{sec:IdentificationOOBoltzmann}

From our analysis we know that the system is driven by the kinetic energy~\eqref{eq:KineticEnergy}, whose first variation $\frac{\delta\cE}{\delta f} = \frac{v^2}{2}$ can be identified in the flux~\eqref{eq:Boltz:flux} by re-expressing it as
\begin{align}\label{eq:Boltz:flux:energy:relation}
  \dx U_t(v,v_*) =- \frac{4}{1-e^2}  \bar\nabla \frac{\delta\cE}{\delta f}(v,v_*)  \dx \bra*{ f_t \otimes f_t}(v,v_*) \, .
\end{align}
In this way, we can reformulate the homogeneous inelastic Boltzmann equation in its weak form~\eqref{eq:WeakFormBoltzmann} as
\begin{align}
  \skp{\varphi, \partial_t f_t } =- \frac{2}{1-e^2} \iint_{\Rdiag} \sigma(|v-v_*|) \bar\nabla \frac{\delta\cE}{\delta f}(v,v_*) \bar\nabla \varphi \dx (f_t \otimes f_t) (v, v_*),
\end{align}
which by the definition of the divergence from~\eqref{eq:Boltz:div} becomes
\begin{equation}
  \partial_t f_t = \frac{4}{1-e^2} \bar\nabla \cdot \bra*{ f \otimes f \ \bar\nabla \frac{\delta\cE}{\delta f}} ,
\end{equation}
whence we can identify the \emph{kinetic relation}, also called \emph{Onsager operator}, between forces\footnote{In this setting the force is understood in the generalised sense as a derivative in phase space.} and fluxes as 
\begin{equation}
  K_f \psi = -\frac{4}{1-e^2}\bar\nabla \cdot\bra*{ f \otimes f \ \bar\nabla \psi} ,
\end{equation}
which in the weak form becomes
 \begin{align}\label{e:InElastBoltz:Onsager}
  \skp{\varphi, K_f \psi} =\frac{2}{1-e^2} \iint_{\Rdiag} \sigma(\abs{v-v_*}) \bar{\nabla} \varphi \bar{\nabla} \psi \dx (f \otimes f)(v, v_*).
  \label{e:InElastBoltz:Onsager}
\end{align}
\begin{rem}[The Onsager operator for elastic Boltzmann and physical kernels]
\label{rem:OnsagerForElasticBoltzmann}
In particular, we observe that $K_f$ is only defined for $e\in [0,1)$ and becomes meaningless in the elastic limit $e\to  1$. Nevertheless, it has structural similarities to the Onsager operator introduced in~\cite{ErbarBoltz} for the homogeneous elastic Boltzmann equation.
\end{rem}

\subsection{Formal derivation of the aggregation equation}\label{subsec:derivation-aggr-eq}
This section is dedicated to a formal derivation of the aggregation equation from the inelastic Boltzmann equation. To this end, we consider the weak formulation of the inelastic Boltzmann equation, \eqref{eq:WeakFormBoltzmann}. For $v'$ close to $v_*$ and $v'_*$ close to $v$, i.e., for almost elastic collisions, i.e., $e\approx 1$, by~\eqref{e:def:nabla:bar}, we have
\begin{align}
    \label{e:nablabar:local}
    \bar{\nabla} \varphi \sim \frac{1- e}{2}\bra*{\varphi'(v_*) - \varphi'(v)} + \mathcal{O}\bra*{\abs*{\frac{1-e}{2}}^2 |v-v_*|}.
\end{align}
Substituting this into~\eqref{eq:WeakFormBoltzmann}~and disregarding all higher order terms, we obtain
\begin{align}
    \label{eq:TaylorExpandedBoltzmannInWeakForm}
    \skp{\varphi, \partial_t f_t}=  \bra*{\frac{1- e}{4}} \iint_{\Rdiag}  \sigma( \abs{v-v_*})(v-v_*)( \varphi'(v_*) - \varphi'(v)) \dx(\ft \otimes \ft)(v, v_*).
\end{align}
Letting the function $\Sigma:\R\to\R$ be such that  $\partial_v\Sigma(v)=\sigma(|v|)\, v$, the above equation simplifies to
\begin{align}
    \skp{\varphi, \partial_t f_t}=  \bra*{\frac{1- e}{4}} \iint_{\Rdiag} \partial_v \Sigma(v-v_*) (\varphi'(v_*) - \varphi'(v)) \dx(\ft \otimes \ft)(v, v_*).
    \label{eq:taylorib}
\end{align}
Unsymmetrising in $v$ and $v_*$ yields
\begin{align}
    \label{eq:TowardsAggregation}
    \begin{split}
    \skp{\varphi, \partial_t f_t}
    &= - \bra*{\frac{1- e}{2}} \iint_{\Rdiag} \partial_v \Sigma(v-v_*) \varphi'(v) \dx(\ft \otimes \ft)(v, v_*)\\
    &= - \int_{\R} \varphi'(v)  \int_{\R}\bra*{\frac{1- e}{2}} \partial_v \Sigma(v-v_*) \dx(\ft\otimes \ft)(v, v_*).
    \end{split}
\end{align}
Choosing 
$$
    W(v)=\frac{1-e}{2} \Sigma(v),
$$
it is immediate to see that  \eqref{eq:TowardsAggregation} is the weak formulation of the aggregation equation
\begin{align}
    \label{eq:AggregationEquation}
    \partial_t f_t = \partial_v \bra*{\ft\, \partial_v W* \ft}.
\end{align}
Note that for the physical kernel, $\sigma(\abs{v})=\abs{v}$, the interaction potential for the aggregation equation becomes
\begin{align}\label{e:def:W}
    W: \R &\to \R_+,\quad 
    v \mapsto \frac{1-e}{6} \abs{v}^3 \, .
\end{align}
Furthermore, we stress that this expansion relies on the fact $e<1$ as otherwise the evolution is trivial, i.e., $\skp{\varphi,\partial_t f_t}=0$ in~\eqref{eq:TaylorExpandedBoltzmannInWeakForm}.

\subsection{Formal gradient flow structure of the aggregation equation}\label{ss:GF:structure:Agg}
As previously mentioned, the aggregation equation can be cast into a 2-Wasserstein gradient flow framework (cf. e.g.~\cite{AGS08, CDFLS11}) for the nonlocal interaction energy
\begin{align}
    \cW(f) = \frac12 \iint_{\Rdiag} W(v-v_*) \dx (f\otimes f)(v, v_*) \, ,
\end{align}
which is dissipated along the flow,   \eqref{eq:AggregationEquation}, in such a way that
\begin{align}
    \frac{\dx}{\dx{t}}\cW(\ft) = - \int_\R \abs*{\partial_v W * \ft}^2 \dx\ft(v).
\end{align}
As demonstrated above, the aggregation equation can be formally derived from the inelastic Boltzmann equation. It is therefore not unreasonable to expect that  the aggregation equation is also a gradient flow for the kinetic energy defined in~\eqref{eq:KineticEnergy}. 
To this end, we study its dissipation along the flow of equation~\eqref{eq:taylorib}. For convenience, we introduce the notation
\begin{align}
    \label{eq:sigma_e}
    \sigma_e(|v-v_*|) := \frac{1-e}{4} |v-v_*|,
\end{align}
which we shall use throughout this work. Setting $\varphi(v)=v^2/2$ we have
\begin{align}
    \label{eq:KineticEnergyDissipation}
    \frac{\dx}{\dx{t}}\cE(\ft)= - \iint_{\Rdiag}  \abs{v-v_*}^2 \sigma_e(v-v_*) \dx(\ft\otimes\ft)(v, v_*)
    =: - \cD(f_t) \leq 0 \, ,
\end{align} 
where $\cD: \cP(\R) \to [0,+\infty]$ is the so-called \emph{dissipation functional}. Thus, the kinetic energy is a Lyapunov function for the dynamics of the aggregation equation. 

The preceding computation reveals an energy-dissipation structure of the aggregation equation with respect to the kinetic energy, cf.  \eqref{eq:KineticEnergyDissipation}, which suggests there may exist an appropriate metric for which~\eqref{eq:AggregationEquation} is a gradient flow of $\cE(f)$. Next, we identify the Onsager operator for this metric and, using the new formalism, derive the weak form of the aggregation equation. %
More precisely, \eqref{eq:TaylorExpandedBoltzmannInWeakForm} becomes
\begin{align}
  - \skp{ \varphi, \partial_t f_t}
  &=-\iint_{\Rdiag}  \sigma_e(\abs{v-v_*})(v-v_*)\bra*{\varphi'(v_*) - \varphi'(v)} \dx(\ft\otimes \ft)(v_*)\notag\\
  &=\iint_{\Rdiag} \sigma_e(\abs{v-v_*})\bra*{\partial_v \frac{\delta \cE}{\delta f}(v_*) - \partial_v \frac{\delta \cE}{\delta f}(v)} \bra*{\varphi'(v_*) - \varphi'(v)} \dx(\ft\otimes \ft)(v, v_*) \label{eq:weakAgg}\\
  &=\skp*{\varphi, K^{\rm agg}_{\ft} \mathrm{D}\cE}\notag,
\end{align}
where $\sigma_e$ is as in  \eqref{eq:sigma_e}. Then, we can read off the appropriate Onsager operator in its weak form
\begin{align}
  \label{e:Aggregate:Onsager}
  \skp{\varphi, K^{\rm agg}_{f} \psi}
  \!=\! \iint_{\Rdiag} \sigma_e(|v-v_*|) \bra*{\varphi'(v_*) - \varphi'(v)} \, \bra*{\psi'(v_*) - \psi'(v)}\dx(\ft \otimes \ft)(v, v_*) \, ,
\end{align}
where $\sigma_e$ is as in  \eqref{eq:sigma_e}, $f\in \cP(\R)$, $\varphi \in C_c^1(\R)$ a test function, and $\psi\in C_c^1(\R)$  a driving vector field.\\

By virtue of~\eqref{e:Aggregate:Onsager}, we note that the Onsager operator induces a positive-definite ($\skp{\varphi, K_f^{\rm agg} \varphi} \geq 0$), bilinear form which is structurally similar to the operator in to  ~\eqref{e:InElastBoltz:Onsager}. The similarity with the Onsager operator of Section \ref{sec:IdentificationOOBoltzmann} is in particular seen since, up to a multiplicative constant, one can be obtained from the other by replacing $\tilde{\nabla}$ by $\bar{\nabla}$ or vice-versa, cf.  ~\eqref{e:nablabar:local}, i.e., $\bar\nabla \varphi \approx \frac{1-e}{2} \tilde\nabla \varphi$. This observation gives rise to the following definition.
\begin{defn}[Nonlocal-local gradient]\label{def:nl_grad}
  For any function $\varphi \in C^1(\R)$ we define its \emph{nonlocal-local gradient} $\tilde{\nabla} \varphi : \Rdiag \to \R$ by
  \begin{equation} 
    \label{eq:nl_grad}
    \tilde{\nabla}\varphi(v,v_*)
    =\varphi'(v_*)-\varphi'(v), \qquad \text{for all } (v,v_*)\in \Rdiag . 
  \end{equation}
\end{defn}
Using this definition, we revisit  \eqref{e:Aggregate:Onsager}, which now reads 
\begin{align}\label{eq:Onsager:tildenabla}
  \skp{\varphi, K^{\mathrm{agg}}_{f} \psi}
  = \iint_{\Rdiag} \sigma_e(|v-v_*|) \tilde \nabla \varphi(v,v_*) \, \tilde \nabla \psi(v,v_*)  \dx (f \otimes f)(v, v_*).
\end{align}
Based on this definition, \eqref{eq:KineticEnergyDissipation} can be written as
\begin{align}\label{eq:energy-dissipation:intro}
    \frac{\dx}{\dx{t}}\cE(\ft)= - \iint_{\Rdiag}  \abs*{\tilde \nabla \frac{\delta \cE}{\delta f}}^2 \sigma_e(v-v_*) \dx(\ft\otimes \ft)(v, v_*)
    =: - \cD(f_t) \leq 0.
\end{align} 
\begin{rem}[Connection to graphs]
    From Definition~\ref{def:nl_grad}, we can read a continuous graph structure $(\R,\Rdiag)$, where $\R$ is the set of vertices and $\Rdiag$ that of edges, equipped with an operator $\tilde\nabla:C^1(\R)\to C(\Rdiag)$ connecting test functions on vertices to test functions on edges. This gives rise to the negative dual operator, which we interpret as a divergence $\tilde\nabla\cdot: \cM(\Rdiag)\to \cM(\R)$ connecting a flux on the edge set $\Rdiag$ to an infinitesimal change of state, i.e., a tangential direction (see Definition~\ref{def:nl_div}). Moreover, note that the driving force field $\tilde\nabla\delta_f \cE(v,v_*)$ is in our case not just a difference of potential values at $\delta_f\cE(v)$ and $\delta_f\cE(v_*)$, as it is the case for simple graph gradients (see e.g.~\cite{EspPatSchSle}), but rather a difference of rates $(\delta_f\cE)'$. It is in this sense that $\tilde\nabla$ is nonlocal-local.
\end{rem}

\subsection{Outline and results}\label{ss:outline:results}

In this paper, we show that the kinetic energy~\eqref{eq:KineticEnergy} is not merely a Lyapunov functional for the aggregation equation as was shown in~\eqref{eq:energy-dissipation:intro}. Indeed, the aggregation equation can be cast into a rigorous metric gradient flow setting where a dynamical transport cost induces the metric in the spirit of  \cite{BenamouBrenier2000,DolbeaultNazaretSavare2009}, and the kinetic energy acts as the driving energy functional.

The variational description we propose provides a promising setting to make rigorous the link with the inelastic spatially homogeneous Boltzmann equation, i.e., to rigorously derive the aggregation of particles from the inelastic spatially homogeneous Boltzmann equation, as was formally shown in~\cite{BCP97}. 
This investigation is kept for future work, along with an extension of our results to more general and singular collision kernels, as well as to higher dimensions, following, e.g., \cite{Toscani04_d3}.

We start by introducing a generalised notion of the continuity equation based on the aforementioned nonlocal-local operators, $\tilde \nabla$, and its formal negative adjoint, $\tilde \nabla \cdot$ (cf. Definition~\ref{def:nl_div}). This consists of a pair $\set*{(f_t, U_t)}_{t\in [0,T]}\subset \cP(\R)\times \cM(\Rdiag)$ satisfying, in a suitable measure-valued sense (see Definition~\ref{def:CRE}), the equation
\begin{equation}\label{eq:gce}
  \partial_t f_t +\tilde{\nabla}\cdot U_t=0, \qquad \mbox{on } [0,T]\times\R . \tag{CE}
\end{equation}
Using the definition of the Onsager operator in~\eqref{eq:Onsager:tildenabla}, we then introduce an action-density functional, $\cA:\cP(\R)\times \cM(\Rdiag)\to [0,\infty]$, which gives rise to a dynamical transport cost, $d_\cA(\mu_0,\mu_1)$, by minimising the total action of a curve $\set*{(f_t, U_t)}_{t\in[0,1]}$ connecting two measures $\mu_0,\mu_1\in \cP(\R)$ and satisfying~\eqref{eq:gce}, cf.~Theorem~\ref{thm:metric}.

Moreover, in this metric setting, we are able to provide a characterisation of weak solutions to the aggregation equation in the form~\eqref{eq:weakAgg} as curves of maximal slope. To this end, we define along any curve $\set*{(f_t, U_t)}_{t\in[0,T]}$  of finite action staisfying~\eqref{eq:gce} the so-called \emph{De Giorgi functional}
\begin{align}
  \cG_T(f) = \cE(f_T)- \cE(f_0) + \frac12 \int_0^T \cA(f_t, U_t) \dx t + \frac12 \int_0^T \cD(f_t) \dx t \geq 0
\end{align}
where the non-negativity is the consequence of a suitable chain rule (see Lemma~\ref{lem:crule}).
The weak solutions to~\eqref{eq:weakAgg} are found to be elements of the zero locus of the De Giorgi functional, i.e., $\cG_T(f)=0$. Conversely, any element of the zero locus of the De Giorgi functional is necessarily a weak solution to the aggregation equation (see Theorem~\ref{thm:weak-curves}).
Finally, we prove that curves of maximal slope are stable with respect to convergence of the initial measures $\mu_0^n \to \mu_0$ such that $\cE(\mu_0^n)\to \cE(\mu_0)$ (cf.~Theorem~\ref{thm:stability}). This allows us to prove the existence of gradient flow solutions based on a finite-dimensional particle approximation (see~\cref{lem:particle}).

\section{The nonlocal-local continuity equation and the collision metric}

\subsection{A nonlocal-local continuity equation}
\label{sec:GCE}
For the subsequent analysis, we study arbitrary curves, $\set{f_t}_{t \in [0,T]} \subset \cP(\R)$, in the set of probability measures induced by a driving field, $\psi_t$, connecting two probability measures $f_0,f_T \in \cP(\R)$. By~\eqref{e:Aggregate:Onsager}~and~\eqref{eq:Onsager:tildenabla}, we have
\begin{align}
    \skp{\varphi,\partial_t \ft }&= -\skp{\varphi, K^{\rm agg}_{\ft} \psi_t}\\
    &=- \iint_{\Rdiag} \ft(v) \ft(v_*)\sigma_e(|v-v_*|) \tilde{\nabla }\varphi(v,v_*) \tilde{\nabla }\psi_t(v,v_*)  \dx{v} \dx{v_*},
\end{align}
which we take as the basis for the definition of a nonlocal-local continuity equation~\eqref{eq:gce}. To this end, we first define an appropriate divergence as the formal adjoint of the nonlocal-local gradient from Definition~\ref{def:nl_grad}.
\begin{defn}[Nonlocal-local divergence] 
  \label{def:nl_div}
  For any $U\in \cM(\Rdiag)$, its \emph{nonlocal-local divergence} $\tilde{\nabla}\cdot U \in \cM(\R)$ 
  is defined as negative dual with weight $\sigma_e$ of $\tilde{\nabla}$, i.e., for all $\varphi\in C^1_c(\R)$ it holds
  \begin{align} 
    \label{eq:nl_div}
    \int_\R \varphi(v) \dx{(\tilde{\nabla} \cdot U)}(v) &= -\iint_{\Rdiag}\tilde{\nabla}\varphi(v,v_*)\sigma_e(|v-v_*|)\dx U(v,v_*)\\
    &=  \iint_{\Rdiag} \varphi'(v) \sigma_e(|v-v_*|) \bra*{ \dx U(v,v_*) - \dx U(v_*,v)} .
  \end{align}
\end{defn}
Now, we can define the nonlocal-local continuity equation.
\begin{defn}[Weak solution to~\eqref{eq:gce}]\label{def:CRE}
A pair $\{(\ft,U_t)\}_{t\in[0,T]}$ is called (weak) solution of the nonlocal-local continuity equation~\eqref{eq:gce}
on $[0,T]$ if there exist two families of measures $\{f_t\}_{t\in[0,T]}\subset \cP(\R)$ and $\{U_t\}_{t\in[0,T]}\subset \cM(\Rdiag)$  such that the map $t \mapsto f_t$ (resp. $t \mapsto U_t$) is measurable with respect to the weak-$^*$ topology on finite Radon measures and they satisfy the following integrability condition 
\begin{align}\label{eq:integrability-cond}
    \int_0^T\iint_{\Rdiag}\sigma_e(|v-v_*|)\dx |U_t|(v,v_*)\dx{t}<+\infty .
\end{align}
along with the weak form of the \emph{nonlocal-local continuity equation~\eqref{eq:gce}} for every $C_c^1((0,T)\times\R)$
\begin{align}
\label{eq:CREweakform}
    \int_0^T \int_\R \partial_t \varphi_t(v) \dx{f_t}(v) \dx{t} + \int_0^T \iint_{\Rdiag} \sigma_e(|v-v_*|)  \tilde{\nabla}\varphi_t(v,v_*)  \dx{U_t(v, v_*)}\dx{t} =0.
\end{align}
We denote by $\set{(f_t,U_t)}_{t\in [0,T]} \in \GCE_T(\mu_0)$ a solution of the nonlocal-local continuity equation on $[0,T]$ starting at $\mu_0$, and we write $\set{(f_t,U_t)}_{t\in [0,T]} \in \GCE_T(\mu_0,\mu_T)$ for solutions connecting $\mu_0$ with $\mu_T$. We will drop the subscript $T$ whenever $T=1$.
\end{defn}
Note that the second term in the weak formulation~\eqref{eq:CREweakform} of the~\eqref{eq:gce} is well-defined under the integrability condition~\eqref{eq:integrability-cond}, since $|\tilde{\nabla}\varphi_t(v,v_*)|\le2\|\partial_v \varphi_t(\cdot)\|_{C^0(\R)}$, for all $t\in [0,T]$. 

\begin{rem}[Strong form of~\eqref{eq:gce}]
Note that, for $U_t\ll f_t\otimes f_t$ and $f_t\ll \dx v$ for any $t\in[0,T]$,
after an integration by parts in $v$ of~\eqref{eq:CREweakform}, we arrive at
\begin{align}
    \label{eq:WeakFormGeneralisedContinuityEquation}
    \skp{\varphi,\partial_t \ft }
    &=- \int_\R \varphi(v)2\, \partial_v \bra*{ \int_\R \ft(v) \ft(v_*)\sigma_e(|v-v_*|) \tilde{\nabla }\psi_t(v,v_*)   \dx{v_*}} \dx{v} .
\end{align}
From~\eqref{eq:WeakFormGeneralisedContinuityEquation}, we have that a couple, $(\ft, \psi_t)$, consisting of the curve, $\ft$, and the driving field, $\psi_t$,  satisfies the strong form of the \emph{nonlocal-local continuity equation}  provided that
\begin{align}
  \partial_t \ft  +  2\partial_v \int_\R \ft(v)\ft(v_*)\sigma_e\bra*{\abs{v-v_*}} \tilde \nabla \psi_t \dx{v_*} = 0, 
\end{align}
where $\tilde \nabla \psi = \psi'(v_*) - \psi'(v)$, as in Definition \ref{def:nl_grad}. In the following, we will always use the weak formulation in the sense of Definition~\ref{def:CRE}.
\end{rem}

As a matter of fact, the integrability condition,  \eqref{eq:integrability-cond}, allows us to infer additional time regularity in that we can prove the existence of a continuous representative for weak solutions to the nonlocal-local continuity equation as stated in the following proposition.

\begin{prop}[Continuous representative]
Let $\{(f_t,U_t)\}_{t\in[0,T]}$ be a solution to the~\eqref{eq:gce} in the sense of Definition~\ref{def:CRE}. Then, there exists a narrowly continuous curve $[0,T]\ni t\mapsto \tilde{f}_t\in\cP(\R)$ such that $f_t=\tilde{f}_t$ for $\mathcal{L}^1$-a.e. $t\in(0,T)$ and, for any test function $\varphi \in C_c^1(\R)$, there holds
\begin{align}\label{eq:GCE:weak_form}
    \frac{\dx}{\dx t} \int \varphi(v) \dx \tilde f_t(v) = \iint_{\Rdiag} \tilde\nabla \varphi(v,v_*) \sigma_e(|v-v_*|) \dx U_t(v, v_*).
\end{align}
\end{prop}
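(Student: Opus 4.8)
The plan is to follow the classical scheme for the time-regularity of solutions to the continuity equation (cf.~\cite[Lemma~8.1.2]{AGS08}, and~\cite{ErbarBoltz,EspPatSchSle} for nonlocal variants), adapted to the operator $\tilde\nabla$: first show that $t\mapsto\int_\R\varphi\,\dx f_t$ admits an absolutely continuous representative for every $\varphi$ in a countable determining family and extract a modulus of continuity uniform in $\varphi$; then prove tightness of $\{f_t\}_t$ so that no mass escapes to infinity; and finally build the narrowly continuous curve by extension from a dense set of times and identify its weak derivative.

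For the first part, I would fix a countable set $\mathscr D\subset C_c^1(\R)$ with the property that every $\varphi\in C_c^1(\R)$ is the limit, together with its derivative and with equi-compact supports, of a sequence in $\mathscr D$. For $\varphi\in\mathscr D$ put $\alpha_\varphi(t):=\int_\R\varphi\,\dx f_t$, which is bounded and $\cL^1$-measurable by the weak-$^\ast$ measurability of $t\mapsto f_t$, and $\beta_\varphi(t):=\iint_{\Rdiag}\tilde\nabla\varphi(v,v_*)\,\sigma_e(|v-v_*|)\,\dx U_t(v,v_*)$. Using $\abs*{\tilde\nabla\varphi}\le 2\|\varphi'\|_{C^0(\R)}$ and the integrability condition~\eqref{eq:integrability-cond}, the function $\beta_\varphi$ is defined for a.e.\ $t$, lies in $L^1(0,T)$, and is $\cL^1$-measurable (approximate $\tilde\nabla\varphi\,\sigma_e$ by functions in $C_c(\Rdiag)$ and pass to the limit by dominated convergence with envelope $2\|\varphi'\|_{C^0(\R)}\sigma_e$). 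Testing~\eqref{eq:CREweakform} with $\xi(t)\varphi(v)$, $\xi\in C_c^1((0,T))$, yields $\alpha_\varphi'=\beta_\varphi$ in $\mathcal D'((0,T))$, so $\alpha_\varphi\in W^{1,1}(0,T)$ has an absolutely continuous representative. Taking a single $\cL^1$-null set $N\subset(0,T)$ outside of which $\alpha_\varphi$ equals this representative for all $\varphi\in\mathscr D$, and then approximating an arbitrary $\varphi\in C_c^1(\R)$ by elements of $\mathscr D$ (bounded convergence for the $\alpha$'s, $L^1(0,T)$-convergence for the $\beta$'s), I obtain for every $\varphi\in C_c^1(\R)$ and all $s,t\in[0,T]\setminus N$ the identity $\int_\R\varphi\,\dx f_t-\int_\R\varphi\,\dx f_s=\int_s^t\beta_\varphi(r)\,\dx r$ and the uniform bound $\abs*{\int_\R\varphi\,\dx f_t-\int_\R\varphi\,\dx f_s}\le 2\|\varphi'\|_{C^0(\R)}\,\omega(s,t)$, where $\omega(s,t):=\int_{s\wedge t}^{s\vee t}\iint_{\Rdiag}\sigma_e(|v-v_*|)\,\dx|U_r|\,\dx r$ satisfies $\omega(s,t)\to0$ as $|t-s|\to0$ and $\omega(s,t)\le M:=\omega(0,T)<\infty$.

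Next comes tightness, which I expect to be the heart of the argument. Choose cutoffs $\chi_k\in C_c^1(\R)$ with $0\le\chi_k\le1$, $\chi_k\equiv1$ on $[-k,k]$, $\supp\chi_k\subset[-2k,2k]$ and $\|\chi_k'\|_{C^0(\R)}\le1/k$, and fix a reference time $t_0\in[0,T]\setminus N$. The identity above (valid for $\chi_k\in C_c^1(\R)$, with the same $N$) gives $f_t([-2k,2k])\ge\int_\R\chi_k\,\dx f_t\ge f_{t_0}([-k,k])-2M/k$ for every $t\in[0,T]\setminus N$, so the tightness of $f_{t_0}\in\cP(\R)$ upgrades to tightness of $\{f_t\}_{t\in[0,T]\setminus N}$. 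By Prokhorov its narrow closure $K$ is narrowly compact; being tight with constant total mass one, $K$ is narrowly metrisable with $\mathscr D$ determining narrow convergence on $K$. By the uniform modulus from the first part, $t\mapsto f_t$ is then uniformly continuous on the dense subset $[0,T]\setminus N$ with values in $K$, and hence extends uniquely to a narrowly continuous curve $[0,T]\ni t\mapsto\tilde f_t\in\cP(\R)$ with $\tilde f_t=f_t$ for $\cL^1$-a.e.\ $t$. For $\varphi\in C_c^1(\R)$ the two continuous functions $t\mapsto\int_\R\varphi\,\dx\tilde f_t$ and $t\mapsto\int_\R\varphi\,\dx f_{t_0}+\int_{t_0}^t\beta_\varphi$ coincide on $[0,T]\setminus N$, hence on all of $[0,T]$, and differentiating in $t$ gives~\eqref{eq:GCE:weak_form}; the classes $\GCE_T(\mu_0)$ and $\GCE_T(\mu_0,\mu_T)$ are then just notation for the corresponding solutions.

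The hard part is the tightness step: replacing an $\cL^1$-a.e.\ defined family of probability measures by a curve that is narrowly — and not merely vaguely — continuous forces one to rule out escape of mass at infinity, and this is exactly where the structure of $\tilde\nabla$ is used, through the estimate $\|\tilde\nabla\varphi\|_{C^0(\Rdiag)}\le 2\|\varphi'\|_{C^0(\R)}$, which makes the flux produced by slowly varying cutoffs negligible, combined with the integrability condition~\eqref{eq:integrability-cond}. The other ingredients — metrisability of narrow convergence on tight sets, extension of a uniformly continuous map from a dense subset, and the density argument in $\varphi$ — are routine and I would not dwell on them.
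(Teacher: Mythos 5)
Your argument is correct and takes essentially the same route as the paper's proof, which likewise follows the scheme of \cite[Lemma 8.1.2]{AGS08} (respectively \cite[Lemma 3.1]{Erb14}): an absolutely continuous representative of $t\mapsto\int\varphi\,\dx f_t$, a common exceptional null set, and the modulus of continuity $2\norm{\varphi'}_{C^0(\R)}\int_s^t\iint_{\Rdiag}\sigma_e(|v-v_*|)\,\dx|U_r|\,\dx r$, which the paper obtains via the piecewise-linear time cutoff $\psi^\eps$ and you obtain directly from the $W^{1,1}$ identity. Your explicit tightness step with the slowly varying cutoffs $\chi_k$ usefully spells out the passage from this modulus to \emph{narrow} (rather than merely vague) continuity of the extended curve, a point the paper's proof leaves implicit.
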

\begin{proof}
Let $\{(f_t,U_t)\}_{t\in[0,T]}$ be a solution in the sense of Definition \ref{def:CRE}  and $\varphi \in C_c^1((0,T)\times \R)$ be a test function. Following the argument of  \cite[Lemma 8.1.2]{AGS08} or \cite[Lemma 3.1]{Erb14} by setting $V(t):=\iint_{\Rdiag}\sigma_e(|v-v_*|) \dx{| U_t|}(v,v_*)$,
we arrive at 
\begin{align}
    \label{eq:continuous-repr}
    \begin{split}
    \int_\R&\varphi_{t_2}(v)\dx{\tilde{f}_{t_2}(v)}-\int_\R\varphi_{t_1}(v)\dx{\tilde{f}_{t_1}(v)}\\
    &=\int_{t_1}^{t_2} \int_\R \partial_t \varphi_t(v) \dx{f_t}(v) \dx{t} + \int_{t_1}^{t_2} \iint_{\Rdiag} \tilde{\nabla}\varphi_t(v,v_*)\sigma_e(|v-v_*|) \dx{U_t}(v,v_*)\dx{t},
    \end{split}
\end{align}
for any $0\leq t_1 < t_2 \leq T$.
In order to obtain the expression claimed in the statement of the proposition, let us choose a sequence of test functions that are in product form and whose time-component is an approximation of the indicator on an interval $(t_1,t_2)$ with $0<t_1<t_2<T$, i.e., 
\begin{align}
    \varphi^\eps(t,v) = \psi^\eps(t) \phi(v),
\end{align}
where $\supp \psi^\eps = [t_1-\eps,t_2+\eps]$ such that $\psi^\eps(t)= 1$ for $t\in [t_1,t_2]$ and $\psi^\eps\in C_c^1([0,T]), \phi \in C_c^1(\R)$. We may, for instance, choose the following approximating sequence
\begin{align}\label{eq:timecutoff}
    \psi^\eps(t) =
    \left\{
    \begin{array}{ll}
         0, & t\in(-\infty,t_1- \eps), \\
         \eps^{-1}(t-t_1+\eps), & t\in(t_1 - \eps, t_1),\\
         1, & t\in(t_1, t_2), \\
         \eps^{-1}(t_2 + \eps - t), & t\in(t_2, t_2 + \eps), \\
         0, & t\in(t_2 + \eps, \infty).
    \end{array}
    \right.
\end{align}
Upon substituting $\varphi_\eps(t,x)$ into  ~\eqref{eq:continuous-repr}, we obtain
\begin{align}
    \begin{split}
    \int_{t_1-\varepsilon}^{t_2+\varepsilon} \int_\R \partial_t \varphi^\varepsilon(t,v) \dx{f_t}(v) \dx{t} + \int_{t_1-\varepsilon}^{t_2+\varepsilon} \iint_{\Rdiag} \tilde{\nabla}\varphi^\varepsilon(t,v,v_*)\sigma_e(|v-v_*|) \dx{U_t}(v,v_*)\dx{t}=0,
    \end{split}
\end{align}
whence
\begin{align}
    \MoveEqLeft{\abs*{\frac1\eps \bra*{\int_{t_1-\eps}^{t_1}\iint_{\Rdiag} \phi(v) \dx{f_t^n}(v)\dx{t} - \int_{t_2}^{t_2+\eps}\iint_{\Rdiag} \phi(v) \dx{f_t^n}(v)\dx{t}}}} \\
    &\leq \int_{t_1 - \eps}^{t_2+ \eps} \iint_{\Rdiag} \abs*{\tilde{\nabla} \phi(v,v_*)} \sigma_e(|v-v_*|) \dx{\abs{U_t^n}}(v,v_*)\dx{t}\\
    &\leq 2\norm{\phi'}_{C^0(\R)} \int_{t_1-\eps}^{t_2+ \eps} \iint_{\Rdiag}  \sigma_e(|v-v_*|) \dx|U_t^n|(v,v_*)\dx{t},
\end{align}
where  \eqref{eq:integrability-cond} ensures that the right-hand side is $L^1$-integrable which then acts as the modulus of absolute continuity. Letting $\eps\to 0$, we have
\begin{align}
    &\abs*{\iint_{\Rdiag} \phi(v) \dx{f_{t_1}^n}(v)\dx{t} - \iint_{\Rdiag} \phi(v) \dx{f_{t_2}^n}(v)\dx{t}} \\
    &\qquad\leq 2\norm{\phi'}_{C^0(\R)} \int_{t_1}^{t_2} \iint_{\Rdiag}  \sigma_e(|v-v_*|) \dx |U_t^n|(v,v_*)\dx{t},
\end{align}
implying the narrow continuity of $\tilde f_t$.
\end{proof}
\begin{rem}[Extension of test function class]\label{rem:extension:test-functions}
  In view of~\eqref{eq:GCE:weak_form} and the integrability condition on the flux 
  we can choose $\varphi\in \Lip(\R)$ as test-function class.
  \label{rem:w1inf}
\end{rem}

We now show two peculiar properties of the solutions to the \textit{nonlocal-local continuity equation}.
\begin{prop}[Preservation of centre of mass and bounded first moments]\label{prop:GCE:centre_mass}
  Let $f_0\in \cP(\R)$ be such that $\int v \dx f_0(v)<\infty$. Then, any $\set{(f_t,U_t)}_{t\in [0,T]}\in \GCE_T(f_0)$ preserves the centre of mass, that is for all $t\in [0,T]$ it holds
  \begin{equation}\label{eq:centre-of-mass}
    \int_\R v \dx f_t(v) = \int_\R v \dx f_0(v). 
  \end{equation}
  Likewise, if $f_0\in \cP(\R)$ is such that $\int |v| \dx f_0(v)<\infty$, then any $\set{(f_t,U_t)}_{t\in [0,T]}\in \GCE_T(f_0)$ satisfies for all $t\in [0,T]$ the bound
  \begin{equation}
  \label{e:m1:GCE}
    \abs*{\pderiv{}{t} \int_\R |v| \dx f_t( v)} \leq   2 \iint_{\Rdiag}\sigma_e(|v-v_*|)\dx |U_t|(v,v_*).
  \end{equation}
\end{prop}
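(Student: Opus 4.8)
The plan is to test the generalised continuity equation — in the form \eqref{eq:GCE:weak_form} for the narrowly continuous representative $\tilde f_t$ supplied by the preceding proposition, integrated in time — against suitable truncations of the unbounded functions $v\mapsto v$ and $v\mapsto\abs v$, and then to let the truncation parameter tend to infinity. The structural fact driving everything is that the nonlocal–local gradient $\tilde\nabla\varphi(v,v_*)=\varphi'(v_*)-\varphi'(v)$ depends only on $\varphi'$: it therefore \emph{annihilates affine functions}, which will yield conservation of the centre of mass, and it satisfies $\abs{\tilde\nabla\varphi}\le2$ whenever $\abs{\varphi'}\le1$, which will control the first moment. Throughout one uses that, by the integrability condition \eqref{eq:integrability-cond}, the measure $\sigma_e(\abs{v-v_*})\dx{\abs{U_t}}(v,v_*)\dx t$ is finite on $[0,T]\times\Rdiag$, and that $t\mapsto\int_0^t\iint_{\Rdiag}\sigma_e\dx{\abs{U_s}}\dx s$ is absolutely continuous and thus supplies a modulus of absolute continuity.

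First I would fix a cutoff $\zeta\in C_c^\infty(\R)$ with $0\le\zeta\le1$, $\zeta\equiv1$ on $[-1,1]$ and $\supp\zeta\subset[-2,2]$, and set $\varphi_R(v):=v\,\zeta(v/R)\in C_c^\infty(\R)$; this obeys $\abs{\varphi_R(v)}\le\abs v$, $\abs{\varphi_R'}\le C_\zeta$ uniformly in $R$, and $\varphi_R(v)\to v$, $\varphi_R'(v)\to1$ for every $v$. I would also fix, for each $R$, a $C^1$ truncation $\Phi_R$ of $\abs{\cdot}$ that equals $\abs v$ on $R^{-1}\le\abs v\le R$, is smoothed near the origin and flattened to a constant for $\abs v\ge R+1$, and satisfies $0\le\Phi_R(v)\le\abs v+1$, $\abs{\Phi_R'}\le1$ everywhere, and $\Phi_R(v)\to\abs v$ for every $v$ — an elementary explicit construction. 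To obtain finiteness of the first moment I would insert $\varphi=\Phi_R$ into \eqref{eq:GCE:weak_form} integrated from $0$ to $t$ (admissible since $\Phi_R\in C_c^1\subset\Lip$, cf. \cref{rem:w1inf}): since $\abs{\tilde\nabla\Phi_R}\le2$ the flux term is bounded by $2M$ with $M:=\int_0^T\iint_{\Rdiag}\sigma_e\dx{\abs{U_s}}\dx s<\infty$, so $\int_\R\Phi_R\dx{\tilde f_t}\le1+\int_\R\abs v\dx{f_0}(v)+2M$ uniformly in $R$, and Fatou's lemma gives $\int_\R\abs v\dx{\tilde f_t}(v)<\infty$ for every $t\in[0,T]$.

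With finiteness in hand, $\abs{\Phi_R}\le\abs v+1\in L^1(\tilde f_t)$, so dominated convergence lets me pass $R\to\infty$ on the left of the integrated identity between times $t_1$ and $t_2$, while $\abs{\tilde\nabla\Phi_R}\le2$ controls the right; this produces
\begin{equation*}
  \abs[\Big]{\int_\R\abs v\dx{\tilde f_{t_2}}(v)-\int_\R\abs v\dx{\tilde f_{t_1}}(v)}\le 2\int_{t_1}^{t_2}\iint_{\Rdiag}\sigma_e(\abs{v-v_*})\dx{\abs{U_t}}(v,v_*)\dx t ,
\end{equation*}
whence $t\mapsto\int_\R\abs v\dx{\tilde f_t}$ is absolutely continuous with derivative bounded as in \eqref{e:m1:GCE}. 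For the centre of mass I would instead test with $\varphi=\varphi_R$: on the left, $\varphi_R(v)\to v$ pointwise with $\abs{\varphi_R}\le\abs v\in L^1(\tilde f_t)$, so the left side tends to $\int v\dx{\tilde f_{t_2}}-\int v\dx{\tilde f_{t_1}}$; on the right, $\tilde\nabla\varphi_R(v,v_*)=\varphi_R'(v_*)-\varphi_R'(v)\to1-1=0$ pointwise on $\Rdiag$ while $\abs{\tilde\nabla\varphi_R}\le2C_\zeta$, so dominated convergence against the finite measure $\sigma_e\dx{\abs{U_t}}\dx t$ kills the flux term and gives $\int_\R v\dx{\tilde f_t}(v)=\int_\R v\dx{f_0}(v)$ for all $t$, which is \eqref{eq:centre-of-mass}.

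The main obstacle is the order of quantifiers in the finiteness step: one is not permitted to substitute the unbounded functions $v$ or $\abs v$ directly into the weak formulation, so the truncations $\Phi_R$ must be chosen to be simultaneously (i) admissible test functions (at least Lipschitz, so that \eqref{eq:GCE:weak_form} together with \cref{rem:w1inf} applies), (ii) uniformly controlled in the nonlocal gradient, $\abs{\tilde\nabla\Phi_R}\le2$, and (iii) of at most linear growth, and one must exploit \eqref{eq:integrability-cond} to bound the flux term uniformly in $R$ and $t$ \emph{before} letting $R\to\infty$. Everything after that is a routine Fatou/dominated-convergence argument resting on the single fact that $\tilde\nabla$ sees only $\varphi'$, and hence carries no flux for affine test functions.
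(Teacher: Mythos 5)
Your proof is correct and takes essentially the same route as the paper: truncate the test functions so that $\tilde\nabla$, which only sees $\varphi'$, contributes nothing on a large box $[-R,R]^2$, bound the remaining flux term via the integrability condition \eqref{eq:integrability-cond}, and pass to the limit $R\to\infty$ — the paper does this with piecewise-linear tent functions (and their absolute values for the first moment) where you use smooth cutoffs $v\zeta(v/R)$ and a separate Lipschitz truncation of $|v|$ with explicit Fatou/dominated-convergence bookkeeping. The only nitpick is your parenthetical claim that $\Phi_R\in C_c^1$: it is constant, not zero, at infinity, but it is globally Lipschitz and hence admissible by Remark~\ref{rem:w1inf}, so the argument stands.
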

\begin{proof}
Let $R>0$ and let us consider the function $\varphi_R:\R \to \R$ defined as
\begin{align}\label{eq:space-cutoff}
    \varphi_R(v) =
    \left\{
    \begin{array}{ll}
         0, & v\in(-\infty, -2R), \\
         -2R-v, & v\in(-2R, -R),\\
         v, & v\in(-R, R), \\
         2R-v, & v\in(R, 2R)  \\
         0, & v\in(2R, \infty).
    \end{array}
    \right.
\end{align}
Note that
\begin{align}
    \abs*{ \tilde\nabla \varphi_R(v,v_*)} \leq 2 , \qquad\text{for almost all $(v,v_*)\in \Rdiag$};
\end{align}
while, at the same time
\begin{align}
    \abs*{ \tilde\nabla \varphi_R(v,v_*)} = 0, \qquad\text{for $ (v,v_*) \in [-R,R]^2 $.}
\end{align}
By~\cref{rem:w1inf}, this is an admissible test function in~\eqref{eq:GCE:weak_form} and  we can estimate
\begin{align}
    \biggl|\int \varphi_R(v) \dx f_t(v) &- \int \varphi_R(v) \dx f_0(v)\biggr| \\
    &= \biggl| \int_0^t \iint_{\Rdiag} \bra*{ \varphi'_R(v_*) - \varphi'_R(v)} \sigma_e(|v-v_*|) \dx U_s(v, v_*) \dx{s} \biggr| \\
    &\le 2 \int_0^t \iint_{\Rdiag \setminus [-R,R]^2} \sigma_e(|v-v_*|) \dx \abs*{U_s}(v,v_*) \dx s \to 0, \quad\text{as } R\to \infty.
\end{align}
Since $\int \varphi_R(v) \dx f_0(v) \to \int v \dx f_0(v) \in \R$, this concludes the proof of the preservation of the centre of mass. The bound for the first moment, follows from a similar construction, by choosing $|\varphi_R|$, with $\varphi_R$ as in~\eqref{eq:space-cutoff}, to be the test function in~\eqref{eq:GCE:weak_form}. Indeed, we note $\abs*{\tilde\nabla\abs*{\varphi_R}(v,v_*)} \leq 2$, for almost all $(v,v_*)\in \Rdiag$. Hence, for any $0\leq s < t\leq T$ we have
\begin{align}
	\biggl|\int \abs*{\varphi_R(v)} \dx f_t(v) - \int \abs*{\varphi_R(v)} \dx f_s(v)\biggr| \le 2 \int_s^t \iint_{\Rdiag} \sigma_e(|v-v_*|) \dx \abs*{U_s}(v,v_*) \dx s.
\end{align}
Then, we obtain the bound~\eqref{e:m1:GCE} after dividing by $t-s$, letting $t\to s$, and noting that $\abs*{\varphi_R(v)}\to \abs{v}$ as $R\to \infty$.
\end{proof}
In the following proposition, we provide a sufficient condition for the existence of a weak solution to the nonlocal-local continuity equation. In particular, any curve that is absolutely continuous with respect to $2$-Wasserstein distance, denoted by $d_2$, connecting two probability measures $\mu_0$ and $\mu_T$, and preserving the centre of mass, is also a weak solution to~\eqref{eq:gce}.
\begin{prop}[Existence of weak solutions]
Let $\mu_0,\mu_T \in \cP(\R)$ be with equal centre of mass, i.e., $\int v \dx \mu_0(v) = \int v \dx \mu_T(v)$, and $d_2(\mu_0,\mu_T)<\infty$. Then, there exists $\{(f_t, U_t)\}_{t\in[0,T]} \in \GCE_T(\mu_0, \mu_T)$.
\end{prop}
\begin{proof}
Since the $2$-Wasserstein distance is finite, i.e., $d_2(\mu_0,\mu_T)<\infty$, there exists an absolutely continuous curve $f_t: [0,T] \to \cP(\R)$ connecting $\mu_0$ and $\mu_T$ preserving the centre of mass and a vector field $V \in  \Leb^2(0,T; \Leb^2(\R, \dx{f_t}))$ such that the flux $\dx{C_t} =V_t \dx{f_t}$ satisfies for a.e.~$t\in[0,T]$
\begin{align}
    \frac{\dx}{\dx t} \int_\R \varphi(v) \dx{f_t}(v) =\int_{\R} \partial_v \varphi(v) \dx{C_t}(v) ,
\end{align}
for all $ \varphi \in C_c^1(\R)$. Note that we may simply take the 2-Wasserstein geodesic as such a curve. By a similar argument as in the proof of Proposition~\ref{prop:GCE:centre_mass} using the test-function~\eqref{eq:space-cutoff}, from the preservation of the centre of mass we obtain that $C_t$ has mean zero, that is for a.e. $t\in [0,T]$ it holds $\int_{\R} \dx C_t = 0$. The well-posedness of the weak form follows by noting that
\begin{equation}\label{eq:CE:bound}
  \int_0^T \int_\R \dx{|C_t|}(v)\dx{t} = \int_0^T \int_\R |V_t| \dx f_t(v) \dx{t} \leq T^{\frac{1}{2}} \norm{V}_{\Leb^2(0,T; L^2(\R, \dx f_t))} < \infty . 
\end{equation}
We define for all $t\in [0,T]$ the flux $U_t \in \cM(\Rdiag)$ by
\begin{equation}\label{e:def:Ct:Ut}
  \dx{U_t}(v,v_*) := \frac{1}{2\sigma_e(\abs{v-v_*})}\bra*{ \dx{f_t}(v) \dx{C_t}(v_*) -\dx{C_t}(v) \dx{f_t}(v_*)} .
\end{equation}
We can check that the resulting pair satisfies $(f_t, U_t)_{t\in [0,T]} \in \GCE_T(\mu_0,\mu_T)$. First, we check the weak form~\eqref{eq:GCE:weak_form} for which we take any $\varphi \in C_c^1(\R)$ and obtain
  \begin{align}
    \frac{\dx}{\dx{t}}\int_\R \varphi(v) \dx{\ft}(v) \dx{t} &= \iint_{\Rdiag} \tilde{\nabla}\varphi(v,v_*) \sigma_e(|v-v_*|) \dx{U_t(v, v_*)} \\
    &=  \iint_{\Rdiag}(\varphi'(v_*)-\varphi'(v))\frac{1}{2}\bra*{   \dx{C_t}(v_*) \dx{f_t}(v)- \dx{C_t}(v) \dx{f_t}(v_*)} \\
    &=  \int_{\R}\partial_v \varphi(v) \dx{C_t}(v),
  \end{align}
  where we have used the fact that $\int_\R \dx{C_t}(v)=0$.
  Second, we check the integrability condition~\eqref{eq:integrability-cond} and bound
  \begin{align}
      \int_0^T \iint_{\Rdiag} \sigma_e(|v-v_*|) |U_t(\dx v, \dx v_*)| \dx t &= \frac{1}{2} \int_0^T \iint_{\Rdiag} \bra*{ \dx f_t(v) \dx|C_t|(v_*)  + \dx|C_t|(v) \dx f_t(v_*)} \dx{t} \\
      &\leq \int_0^T \int_{\R} \dx | C_t|(v) < \infty ,
  \end{align}
  by the bound~\eqref{eq:CE:bound}.
\end{proof}

\subsection{The action-density functional and its properties}
This section is dedicated to introducing the action-density functional which plays a crucial role in the subsequent analysis.
We start by considering the auxiliary function $\alpha:\R_+ \times \R \to \R_+$ given by
\begin{equation}\label{eq:alpha-fun}
    \alpha( s, u):=
\begin{cases}
         \frac{u^2}{s},& \text{if }  s>0,  \\
         0, & \text{if }  u= 0,\\
         +\infty, & \text{if } u \neq 0, s = 0.
\end{cases}
\end{equation}
We observe that $\alpha$ is jointly convex, lower semicontinuous, and $1$-homogeneous.

Following the strategy of \cite{DolbeaultNazaretSavare2009, Erb14, ErbarBoltz, EspPatSchSle}, we define the action-density functional.
\begin{defn}[Action-density functional]\label{def:action-functional}
For any $f\in\cP(\R)$ and $U\in\cM(\Rdiag)$, set $|\lambda|=f\otimes f+|U|\in\cM^+(\Rdiag)$. We define the action-density functional by
\begin{align}
    \cA(f,U):=\iint_{\Rdiag}\alpha\biggl(\frac{\dx{f\otimes f}}{\dx{|\lambda|}},\frac{\dx{U}}{\dx{|\lambda|}}\biggr)\sigma_e(\abs{v-v_*})\dx{|\lambda|}(v,v_*) \, ,
\end{align}
where the function $\alpha$ is defined as in  \eqref{eq:alpha-fun}.
\end{defn}
Note that the above definition is independent of the choice of $|\lambda|$ as long as $f\otimes f+|U|\ll|\lambda|$. In the next lemma, we see that the flux of any couple, $(f,U)$, with finite action-density, takes a specific form. %
\begin{lem}
\label{prop:action}
Let $f \in \cP(\R)$ and $U\in\cM(\Rdiag)$ be such that $\cA(f,U)<+\infty$. Then, there exists a Borel function $\hat{U}:\Rdiag\to\R$ such that
\[
    \dx{U}(v,v_*)= \hat {U}(v,v_*)\dx{\bra{f \otimes f}}(v,v_*) \, ,
\]
and the action-density is given by
\begin{align}
\label{eq:action_final_form}
    \cA(f,U)&=\iint_{\Rdiag}|\hat{U}|^2(v,v_*) \sigma_e(\abs{v-v_*})\,\dx{\bra{f \otimes f}}(v, v_*)  \, .
\end{align}
In particular, if $f\ll \cL$ then $U\ll \cL \otimes \cL$, as well.
\end{lem}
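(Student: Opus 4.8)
The plan is to exploit the standard fact that the action-density functional built from a jointly convex, $1$-homogeneous integrand is the convex integral functional associated to $\alpha$, and to use the Radon--Nikodym / Lebesgue decomposition together with finiteness of $\cA(f,U)$. First I would decompose $U$ with respect to the reference measure $|\lambda| = f\otimes f + |U|$ and, simultaneously, with respect to $f\otimes f$, writing $U = U^{\mathrm{ac}} + U^{\perp}$ where $U^{\mathrm{ac}} \ll f\otimes f$ and $U^{\perp} \perp f\otimes f$. On the set where $f\otimes f$ vanishes but $U^{\perp}$ does not, the density $\frac{\dx{f\otimes f}}{\dx|\lambda|}$ is zero while $\frac{\dx U}{\dx|\lambda|}$ is nonzero, so by the definition of $\alpha$ in~\eqref{eq:alpha-fun} the integrand equals $+\infty$ on a set of positive $|\lambda|$-measure unless $U^\perp = 0$; since $\sigma_e(|v-v_*|)>0$ for all $(v,v_*)\in\Rdiag$, this contributes $+\infty$ to $\cA(f,U)$, contradicting finiteness. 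Hence $U^{\perp}=0$, i.e. $U \ll f\otimes f$, and we may take $\hat U := \frac{\dx U}{\dx (f\otimes f)}$ as a Borel function.

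Having established absolute continuity, I would compute $\cA(f,U)$ directly: choosing $|\lambda| = f\otimes f + |U|$ as before, on the set where $f\otimes f$ is nonzero one has $\frac{\dx f\otimes f}{\dx|\lambda|} = s$ and $\frac{\dx U}{\dx|\lambda|} = \hat U\, s$ for the appropriate density $s = \frac{\dx f\otimes f}{\dx|\lambda|}$, so that by $1$-homogeneity of $\alpha$,
\[
  \alpha\!\left(s,\hat U s\right)\sigma_e \dx|\lambda| = \alpha(1,\hat U)\, s\, \sigma_e\, \dx|\lambda| = |\hat U|^2 \sigma_e \dx(f\otimes f),
\]
using $\alpha(1,u) = u^2$ and $s\dx|\lambda| = \dx(f\otimes f)$. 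Integrating over $\Rdiag$ gives~\eqref{eq:action_final_form}. (One should note the integral is unaffected by the null set where $s=0$, since there $\hat U s = 0$ forces $\frac{\dx U}{\dx|\lambda|}=0$ too, consistent with $U\ll f\otimes f$.)

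For the final assertion, suppose $f \ll \cL$ and write $f = \rho\, \cL$. Then $f\otimes f = (\rho\otimes\rho)\,(\cL\otimes\cL)$, so $f\otimes f \ll \cL\otimes\cL$; combined with $U \ll f\otimes f$ just proved, transitivity of absolute continuity yields $U \ll \cL\otimes\cL$, and its density is $\hat U\,(\rho\otimes\rho)$ restricted to $\Rdiag$. The only mildly delicate point in the whole argument is the dichotomy step showing $U^\perp = 0$: one must argue carefully that the singular part of $U$ is concentrated on a set that is $f\otimes f$-null, so that the first argument of $\alpha$ there is genuinely $0$ while the second is genuinely nonzero on a set of positive $|\lambda|$-mass, and that $\sigma_e$ being strictly positive on $\Rdiag$ prevents the weight from killing this contribution; this is where I expect to spend most of the care, though it is entirely standard. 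The rest is bookkeeping with the Lebesgue decomposition and the homogeneity of $\alpha$.
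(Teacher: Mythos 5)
Your proposal is correct and follows essentially the same route as the paper: finiteness of $\cA(f,U)$ together with the definition of $\alpha$ (and strict positivity of $\sigma_e$ on $\Rdiag$) forces the part of $U$ singular with respect to $f\otimes f$ to vanish, and then the $1$-homogeneity of $\alpha$ combined with the chain rule for Radon--Nikodym densities yields \eqref{eq:action_final_form}; the paper phrases the absolute-continuity step directly via $(\sigma_e\, f\otimes f)$-null sets rather than through the Lebesgue decomposition, but the underlying argument is identical, and your transitivity remark for $f\ll\cL$ matches the paper's concluding claim.
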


\begin{proof}
Let $f \in \cP(\R)$, $U \in \cM(\Rdiag)$, and  $|\lambda|\in \cM^+(\Rdiag)$ be as in Definition ~\ref{def:action-functional} such that $\cA(f, U) < \infty$. Then, setting $\mu:=f\otimes f$, the action functional can be written as
\begin{align}
    \cA(f, U) 
    &= \iint_{\Rdiag} \alpha\bra*{\frac{\dx{\mu}}{\dx{\abs{\lambda}}}, \frac{\dx{U}}{\dx{\abs{\lambda}}}} \sigma_e(\abs{v-v_*}) \dx{\abs{\lambda}}
    = \iint_{\Rdiag} \alpha\bra*{\tilde{\mu}, \tilde{U}} \sigma_e(\abs{v-v_*}) \dx{\abs{\lambda}} \, ,
\end{align}
where $\tilde{\mu}, \tilde{U}$ are the Radon--Nikodym derivatives of $\mu,U$, respectively, with respect to $\abs{\lambda}$. In order to be able to use the $1$-homogeneity of the kernel, $\alpha$, we show that $U \ll \mu$. To this end, let $N\subset \Rdiag$ be a $(\sigma_e\mu)$-null set, i.e., $\tilde \mu(v, v_*) = 0$, for $v,v_*\in N$, $\sigma_e\abs{\lambda}$-a.e. in $\Rdiag$. Since the action of $(f, U)$ is  finite, we conclude, by definition of $\alpha$, cf. \eqref{eq:alpha-fun}, that $\tilde U(v, v_*) = 0$, $\sigma_e\abs{\lambda}$-a.e., which, in turn, implies $ U \ll \mu$. Upon an application of the chain rule we obtain
\begin{align}
    \frac{\dx{U}}{\dx{\abs{\lambda}}} = \frac{\dx{U}}{\dx{\mu}} \frac{\dx{\mu}}{\dx{\abs{\lambda}}} =: \hat U \tilde \mu.
\end{align}
Substituting this expression into the action density above in conjunction with the homogeneity of order one, we obtain
\begin{align}
    \cA(f,U) 
    &= \iint_{\Rdiag} \abs{\hat U}^2 \tilde \mu \,\sigma_e(|v-v_*|) \dx{\abs{\lambda}}
    = \iint_{\Rdiag} \abs{\hat U}^2\,\sigma_e(|v-v_*|) \dx{\mu} \\
    &= \iint_{\Rdiag} \abs{\hat U}^2 \sigma_e(|v-v_*|) \dx{\bra{f\otimes f}}(v,v_*),
\end{align}
which concludes the proof.
\end{proof}

\begin{prop}[Antisymmetric fluxes have lower action]\label{prop:antisymm-flux}
Let $f \in \cP(\R)$ and $U \in \cM(\Rdiag)$ be such that $\cA(f, U) < \infty$. Then, there exists an antisymmetric\footnote{That is to say $U(A)=-U(\Gamma(A))$, for all Borel $A \subset \Rdiag$, where $\Gamma(v,v_*)=(v_*,v)$.} measure $U^{\mathrm{as}} \in \cM(\Rdiag)$, $U^{\mathrm{as}} \ll \mu$, such that
\begin{align}
    \cA(f, U^{\mathrm{as}})\leq \cA(f, U),\quad \mbox{ and }\quad \tilde{\nabla} \cdot U^{\mathrm{as}}= \tilde{\nabla}\cdot U.
\end{align}
\end{prop}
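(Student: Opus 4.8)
The plan is to produce $U^{\mathrm{as}}$ by explicit antisymmetrisation of $U$. Since $\cA(f,U)<\infty$, \cref{prop:action} tells us that $\dx U = \hat U \dx(f\otimes f)$ for a Borel function $\hat U:\Rdiag\to\R$, and that $\cA(f,U) = \iint_{\Rdiag} |\hat U|^2 \sigma_e(|v-v_*|)\dx(f\otimes f)$. Write $\Gamma(v,v_*)=(v_*,v)$ for the flip map. The natural candidate is
\begin{align*}
  \hat U^{\mathrm{as}}(v,v_*) := \tfrac12\bra*{\hat U(v,v_*) - \hat U(v_*,v)}, \qquad \dx U^{\mathrm{as}} := \hat U^{\mathrm{as}} \dx(f\otimes f).
\end{align*}
Because $f\otimes f$ is symmetric under $\Gamma$, the measure $U^{\mathrm{as}}$ is antisymmetric in the sense of the footnote, and clearly $U^{\mathrm{as}}\ll \mu = f\otimes f$.

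For the action bound, I would use that $f\otimes f$ is $\Gamma$-invariant to write, after the change of variables $(v,v_*)\mapsto(v_*,v)$ (which also leaves $\sigma_e(|v-v_*|)$ unchanged),
\begin{align*}
  \cA(f,U^{\mathrm{as}}) &= \iint_{\Rdiag} \abs*{\tfrac12(\hat U(v,v_*)-\hat U(v_*,v))}^2 \sigma_e(|v-v_*|)\dx(f\otimes f)\\
  &\le \iint_{\Rdiag} \tfrac12\bra*{\abs{\hat U(v,v_*)}^2 + \abs{\hat U(v_*,v)}^2}\sigma_e(|v-v_*|)\dx(f\otimes f) = \cA(f,U),
\end{align*}
where the inequality is the elementary estimate $|a-b|^2 \le 2|a|^2+2|b|^2$ divided by $4$, i.e. $|(a-b)/2|^2 \le \tfrac12(|a|^2+|b|^2)$, and the last equality uses the symmetry of $\sigma_e(|v-v_*|)\dx(f\otimes f)$ under $\Gamma$ to identify the two integrals.

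Finally I would check that the divergence is unchanged. By \cref{def:nl_div}, for any $\varphi\in C_c^1(\R)$,
\begin{align*}
  \int_\R \varphi \dx(\tilde\nabla\cdot U) &= \iint_{\Rdiag} \varphi'(v)\,\sigma_e(|v-v_*|)\bra*{\dx U(v,v_*) - \dx U(v_*,v)}.
\end{align*}
Since only the antisymmetric part of $U$ enters this expression — replacing $\dx U(v,v_*)$ by its symmetric part contributes nothing to the difference $\dx U(v,v_*)-\dx U(v_*,v)$ — and since $\dx U^{\mathrm{as}}(v,v_*)-\dx U^{\mathrm{as}}(v_*,v) = (\hat U(v,v_*)-\hat U(v_*,v))\dx(f\otimes f) = \dx U(v,v_*)-\dx U(v_*,v)$ using $\Gamma$-invariance of $f\otimes f$ once more, the right-hand sides agree for every test function, hence $\tilde\nabla\cdot U^{\mathrm{as}} = \tilde\nabla\cdot U$.

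The only genuine subtlety — not really an obstacle — is making sure the flip $\Gamma$ is well-defined and measure-preserving on $\Rdiag$ (it is, being a linear involution of $\R^2$ preserving the diagonal's complement) and that the Radon--Nikodym representation from \cref{prop:action} is legitimately pushed through these manipulations; everything else is the elementary convexity inequality plus symmetry bookkeeping. I expect the write-up to be short.
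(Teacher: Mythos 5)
Your proposal is correct and follows essentially the same route as the paper's proof: antisymmetrise the density $\hat U$ from \cref{prop:action} against the $\Gamma$-invariant reference measure $f\otimes f$, bound the action by the elementary convexity inequality (the paper phrases this as expanding the square and applying Young's inequality to the cross term), and use the antisymmetry of $\tilde\nabla\varphi$, equivalently the second line of \cref{def:nl_div}, to see the divergence is unchanged. As a side note, your explicit formula $\hat U^{\mathrm{as}}(v,v_*)=\tfrac12(\hat U(v,v_*)-\hat U(v_*,v))$ is what the paper intends, its displayed definition containing an evident typo.
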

\begin{proof}
We define $\hat{U}^{\mathrm{as}} : \Rdiag \to \R$ to be
\begin{align}
    \hat{U}^{\mathrm{as}}(v,v_*) := \frac{1}{2}\bra*{\hat{U}(v,v_*)- \hat{U}(v,v_*)},
\end{align}
where $\hat{U}$ is as defined in the statement of~\cref{prop:action}. 
This defines a measure, $U^{\mathrm{as}} \in \cM(\Rdiag)$, via the relation
\begin{align}
    \dx{U^{\mathrm{as}}}(v,v_*):=\hat{U}^{\mathrm{as}}(v,v_*) \dx\bra*{f \otimes f}(v,v_*).
\end{align}
The proof then follows by substitution. We have that
\begin{align}
    \cA(f, U^{\mathrm{as}}) 
    &= \iint_{\Rdiag} \abs[\big]{\hat{U}^{\mathrm{as}}}^2 (v,v_*)  \sigma_e(\abs{v-v_*}) \dx\bra*{f \otimes f}(v,v_*) \\
    & = \frac{1}{2}\iint_{\Rdiag} \abs[\big]{\hat{U}}^2(v,v_*)   \sigma_e(\abs{v-v_*}) \dx\bra*{f \otimes f}(v,v_*)\\
    & \quad - \frac{1}{2} \iint_{\Rdiag} \hat{U}(v,v_*) \hat{U}(v_*,v)   \sigma_e(\abs{v-v_*}) \dx\bra*{f \otimes f}(v,v_*).
\end{align}
Applying Young's inequality, we obtain
\begin{align}
    \cA(f, U^{\mathrm{as}}) 
    &\leq \frac{1}{2}\iint_{\Rdiag} \abs[\big]{\hat{U}}^2(v,v_*)   \sigma_e(\abs{v-v_*}) \dx\bra*{f \otimes f}(v,v_*) \\
    &\quad  + \frac{1}{4} \iint_{\Rdiag} \abs[\big]{\hat{U}}^2(v,v_*)   \sigma_e(\abs{v-v_*}) \dx\bra*{f \otimes f}(v,v_*) \\
    &\quad +\frac{1}{4} \iint_{\Rdiag} \abs[\big]{\hat{U}}^2(v_*,v)   \sigma_e(\abs{v-v_*}) \dx\bra*{f \otimes f}(v,v_*) \\
     &= \iint_{\Rdiag} \abs[\big]{\hat{U}}^2(v,v_*)   \sigma_e(\abs{v-v_*}) \dx\bra*{f \otimes f}(v,v_*) \\
    &= \cA(f, U).
\end{align}
Finally, we can check that, for any test function $\varphi \in C^\infty_c(\R)$, it holds that
\begin{align}
    \MoveEqLeft\iint_{\Rdiag} \tilde{\nabla} \varphi(v,v_*)\sigma_e(|v-v_*|) \dx{U^{\mathrm{as}}}(v,v_*) \\
 	&= \frac{1}{2} \iint_{\Rdiag} \tilde{\nabla} \varphi(v,v_*)\sigma_e(|v-v_*|) \dx\bra*{U(v,v_*) - U(v_*,v)} \\
    &=\frac{1}{2} \iint_{\Rdiag} \tilde{\nabla} \varphi(v,v_*)\sigma_e(|v-v_*|)\dx{U(v,v_*)}
     - \frac{1}{2} \iint_{\Rdiag} \tilde{\nabla} \varphi(v,v_*)\sigma_e(|v-v_*|) \dx{U(v_*,v)} \\
    &=\frac{1}{2} \iint_{\Rdiag} \tilde{\nabla} \varphi(v,v_*)\sigma_e(|v-v_*|) \dx{U(v,v_*)}
    +\frac{1}{2} \iint_{\Rdiag} \tilde{\nabla} \varphi(v,v_*)\sigma_e(|v-v_*|) \dx{U(v,v_*)} 
    \\
    &=\iint_{\Rdiag} \tilde{\nabla} \varphi(v,v_*)\sigma_e(|v-v_*|) \dx{U}(v,v_*),
\end{align} 
where in the penultimate step we have used the fact that $\tilde{\nabla}\varphi(v,v_*)=-\tilde{\nabla}\varphi(v_*,v)$ from Definition~\ref{def:nl_grad}. Using~\cref{def:nl_div}, the result follows. 
\end{proof}

\begin{prop}[Lower semicontinuity of the action density]
\label{prop:lsc-action} The action-density functional is lower semicontinuous with respect to the weak-$^*$ convergence in $\cP (\R)\times \cM(\Rdiag)\subset \cM(\R \times \Rdiag)$.
\end{prop}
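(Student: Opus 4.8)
The plan is to follow the standard Benamou--Brenier-type argument adapted to this nonlocal setting, exploiting that $\alpha$ is jointly convex, lower semicontinuous, and $1$-homogeneous. First I would use the $1$-homogeneity to rewrite $\cA$ as a supremum of linear functionals. Concretely, by the Legendre--Fenchel representation of the $1$-homogeneous convex function $(s,u)\mapsto \alpha(s,u)s$-type quantity (more precisely of $\alpha$ itself), one has the pointwise identity
\begin{align}
  \alpha(s,u) = \sup\set*{a\,s + b\,u : (a,b)\in K},\qquad
  K := \set*{(a,b)\in\R^2 : a + \tfrac14 b^2 \le 0},
\end{align}
so that, writing $\mu_f := f\otimes f$ and recalling that the reference measure $\abs\lambda$ can be taken as $\mu_f + \abs U$,
\begin{align}
  \cA(f,U) = \sup\set*{\iint_{\Rdiag}\!\! \Bigl(a(v,v_*)\dx\mu_f + b(v,v_*)\dx U\Bigr)\sigma_e(\abs{v-v_*})
   : (a,b)\in C_b(\Rdiag;K)},
\end{align}
the supremum running over continuous bounded pairs $(a,b)$ with $a+\frac14 b^2\le 0$ everywhere. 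Establishing this duality formula is the first key step; it is a routine consequence of the scalar identity above together with a measurable-selection/approximation argument to pass from Borel to continuous bounded integrands, as in \cite[Lemma 2.9]{ErbarBoltz} or \cite[Thm.~3.4]{DolbeaultNazaretSavare2009}.

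Once the dual formula is in hand, lower semicontinuity is immediate: for each fixed admissible pair $(a,b)\in C_b(\Rdiag;K)$, the map
\begin{align}
  (f,U)\mapsto \iint_{\Rdiag}\bigl(a\,\dx(f\otimes f) + b\,\dx U\bigr)\,\sigma_e(\abs{v-v_*})
\end{align}
is continuous with respect to weak-$^*$ convergence in $\cP(\R)\times\cM(\Rdiag)$: indeed $f_n\to f$ narrowly implies $f_n\otimes f_n\to f\otimes f$ narrowly, and since $a(v,v_*)\sigma_e(\abs{v-v_*})$ and $b(v,v_*)\sigma_e(\abs{v-v_*})$ are continuous (though not bounded, due to the growth of $\sigma_e$), one needs a mild truncation argument or a tightness/uniform-integrability input to justify testing against these unbounded functions. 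A clean way around this is to restrict, in the dual formula, to pairs $(a,b)$ that additionally vanish for $\abs{v-v_*}$ large, which still suffices to recover $\cA$ by monotone approximation; then the test functions $a\sigma_e$, $b\sigma_e$ are genuinely in $C_b(\Rdiag)$ and continuity under weak-$^*$ convergence is automatic. Taking the supremum of a family of weak-$^*$ continuous functionals yields a weak-$^*$ lower semicontinuous functional, which is the claim.

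The main obstacle is the interplay between the unboundedness of the weight $\sigma_e(\abs{v-v_*}) = \frac{1-e}{4}\abs{v-v_*}$ and the fact that weak-$^*$ (as opposed to narrow) convergence on $\cM(\Rdiag)$ does not control mass escaping to infinity or concentrating on the diagonal $\set{v=v_*}$ which is excluded from $\Rdiag$. The weight $\sigma_e$ vanishes on the diagonal, which is helpful near $v=v_*$, but its linear growth at infinity means one cannot naively pair it with merely bounded integrands. The resolution, as sketched above, is to build the supremum only from admissible pairs $(a,b)$ supported where $\sigma_e$ is bounded (e.g.\ $\abs{v-v_*}\le R$) and argue that letting $R\to\infty$ recovers the full action by the monotone convergence theorem applied to the defining integral of $\cA$; each truncated functional is then manifestly weak-$^*$ continuous, and the supremum over all of them is lower semicontinuous. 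One should also record that lower semicontinuity on the product space $\cP(\R)\times\cM(\Rdiag)$ viewed inside $\cM(\R\times\Rdiag)$ is consistent with the statement, since narrow convergence of the first marginal and weak-$^*$ convergence of $U$ is exactly the topology under which the truncated linear functionals are continuous.
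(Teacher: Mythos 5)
Your overall strategy---the Legendre--Fenchel representation of $\alpha$ via the dual set $K=\set{(a,b):a+\tfrac14 b^2\le 0}$ (which is indeed the correct support set), rewriting $\cA$ as a supremum of linear functionals in $(f\otimes f,U)$, and concluding by lower semicontinuity of suprema of continuous functionals---is the classical duality route, and it is essentially the mechanism behind the black-box result the paper uses: the paper's own proof just checks that $g((v,v_*),(s,u))=\alpha(s,u)\sigma_e(\abs{v-v_*})$ is lower semicontinuous, jointly convex and $1$-homogeneous in $(s,u)$ and invokes \cite[Theorem 3.4.3]{But89}. So in spirit you are re-proving the cited theorem rather than taking a different road.

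There is, however, a genuine gap in the one step you flag as ``manifestly'' true. Truncating only in $\abs{v-v_*}$ does not make the truncated functionals weak-$^*$ continuous: after that truncation $b\,\sigma_e$ is bounded, continuous and vanishes near the diagonal, but it does not decay as $\abs{v}+\abs{v_*}\to\infty$ inside the strip $\set{\abs{v-v_*}\le R}$, so it lies in $C_b(\Rdiag)$ but not in $C_0(\Rdiag)$. Weak-$^*$ convergence of $U_n$ in $\cM(\Rdiag)$ is duality with $C_0(\Rdiag)$ (and in the application, Proposition~\ref{prop:compactCRE}, only vague convergence against $C_c$ is available), so the pairing $U\mapsto\iint_{\Rdiag} b\,\sigma_e\dx{U}$ is \emph{not} continuous along such sequences: take $U_n=\delta_{(n,n+1)}$, which converges weakly-$^*$ to $0$, while $\iint b\,\sigma_e\dx{U_n}=\sigma_e(1)\,b(n,n+1)$ need not vanish. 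Since your argument rests on $\cA$ being a supremum of weak-$^*$ continuous functionals, this step fails as written (note the issue is only for the flux: the $f\otimes f$ factor is fine because the $f_n$ are probability measures converging to a probability measure, so no mass escapes there). The repair is straightforward and keeps your architecture intact: take the supremum over pairs $(a,b)\in C_c(\Rdiag;K)$ with compact support in $\Rdiag$, i.e.\ bounded away from the diagonal \emph{and} from spatial infinity. The standard Reshetnyak/duality representation for convex, lsc, $1$-homogeneous integrands shows that this smaller supremum still equals $\cA(f,U)$ (exhaust $\Rdiag$ by compact sets and use monotone convergence, exactly as in your $R\to\infty$ argument, but in all variables); each such functional is then continuous under narrow convergence of $f_n\otimes f_n$ and vague (hence weak-$^*$) convergence of $U_n$, and lower semicontinuity follows.
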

\begin{proof}
Let us consider $\{f_n\}_{n\in\mathbb{N}}\subset \cP(\R)$ and $\{U_n\}_{n\in\mathbb{N}}\subset \cM(\Rdiag)$ such that
\begin{align}
   f_n\to f, \quad \mbox{in } \cP(\R),
\end{align}
as well as
\begin{align}
   U_n\to U, \quad \mbox{in } \cM(\Rdiag).
\end{align}
Obviously, convergence in $\cP(\R)$ of $\{f_n\}_{n\in\mathbb{N}}$ implies that $\{f_n\otimes f_n\}_{n\in\mathbb{N}}$ converges weakly-$^*$ in $\cP(\Rdiag)$. Let us define the function $g:\Rdiag\times (\R_+ \times \R)\to\R$ as
\[
    g((v,v_*), (s,u))=\alpha(s, u)\sigma_e(\abs{v-v_*}),
\]
which is lower semicontinuous in all its variables, jointly convex, and 1-positive homogeneous in $(s,u)$. Then, \cite[Theorem 3.4.3]{But89} implies the action is  weakly-$^*$ sequentially lower semicontinuous in $\cM(\R \times \Rdiag)$. 
\end{proof}

\begin{prop}[Convexity of the action density]
\label{prop:convexity-action}
Let $f^i\in\cP(\R)$ and $U^i\in\cM(\Rdiag)$ for $i=0,1$. For any $\tau\in[0,1]$, such that $f_\tau:=(1-\tau)f^0+\tau f^1$ and $U_\tau:=(1-\tau) U^0+\tau U^1$ it holds
\[
    \cA(f_\tau,U_\tau) \leq (1-\tau) \cA(f^0,U^0) + \tau \cA(f^1, U^1).
\]
\end{prop}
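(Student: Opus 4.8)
The plan is to \emph{localise} the inequality and reduce it to a pointwise convexity estimate for the integrand of $\cA$, in the spirit of the proof of \cref{prop:lsc-action}. If either $\cA(f^0,U^0)$ or $\cA(f^1,U^1)$ is infinite there is nothing to prove, so assume both are finite; by \cref{prop:action}, $U^i\ll f^i\otimes f^i$ for $i=0,1$, hence $U_\tau\ll f_\tau\otimes f_\tau$, and the measures $f^0\otimes f^0$, $f^1\otimes f^1$, $f_\tau\otimes f_\tau$, $|U_\tau|$ are all absolutely continuous with respect to $\nu:=\rho\otimes\rho$, where $\rho:=f^0+f^1$. Writing $f^i=\phi^i\rho$ and, on $\Rdiag$, $U^i=u^i\nu$, we have $\phi_\tau=(1-\tau)\phi^0+\tau\phi^1$ and $u_\tau=(1-\tau)u^0+\tau u^1$, and by the $1$-homogeneity of $\alpha$ each of the three actions may be computed against the single reference measure $\nu$,
\begin{equation*}
  \cA(g,V)=\iint_{\Rdiag}\alpha\biggl(\frac{\dx{g\otimes g}}{\dx{\nu}},\frac{\dx{V}}{\dx{\nu}}\biggr)\sigma_e(\abs{v-v_*})\dx{\nu}(v,v_*) .
\end{equation*}
Thus the proposition reduces to showing, for $\nu$-a.e.\ $(v,v_*)\in\Rdiag$ and with the abbreviations $a:=\phi^0(v)\phi^0(v_*)$, $b:=\phi^1(v)\phi^1(v_*)$,
\begin{equation*}
  \alpha\bigl(\phi_\tau(v)\phi_\tau(v_*),\,(1-\tau)u^0+\tau u^1\bigr)\ \le\ (1-\tau)\,\alpha(a,u^0)+\tau\,\alpha(b,u^1).
\end{equation*}

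For the pointwise step I would proceed as follows. Since $u^i$ vanishes wherever $f^i\otimes f^i$ does, only $a,b>0$ needs to be treated. Expanding $\phi_\tau(v)\phi_\tau(v_*)=\bigl((1-\tau)\phi^0(v)+\tau\phi^1(v)\bigr)\bigl((1-\tau)\phi^0(v_*)+\tau\phi^1(v_*)\bigr)$ and bounding the mixed term by the arithmetic--geometric mean inequality gives $\phi_\tau(v)\phi_\tau(v_*)\ge\bigl((1-\tau)\sqrt a+\tau\sqrt b\bigr)^2$; since $s\mapsto\alpha(s,u)$ is nonincreasing, it then suffices to bound $\alpha\bigl(((1-\tau)\sqrt a+\tau\sqrt b)^2,(1-\tau)u^0+\tau u^1\bigr)$. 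Writing $(1-\tau)u^0+\tau u^1=(1-\tau)\sqrt a\cdot\tfrac{u^0}{\sqrt a}+\tau\sqrt b\cdot\tfrac{u^1}{\sqrt b}$ exhibits the quotient $\tfrac{(1-\tau)u^0+\tau u^1}{(1-\tau)\sqrt a+\tau\sqrt b}$ as a convex combination of $u^0/\sqrt a$ and $u^1/\sqrt b$, and Jensen's inequality for $t\mapsto t^2$ would then be applied.

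The pointwise step is where I expect the genuine difficulty to lie. Jensen only yields a bound by a \emph{re-weighted} mean of $\alpha(a,u^0)$ and $\alpha(b,u^1)$, and the difference between those weights and $(1-\tau,\tau)$ carries the sign of $\sqrt a-\sqrt b$ and need not be favourable; structurally, this reflects the fact that the effective mobility $(\phi(v),\phi(v_*))\mapsto\phi(v)\phi(v_*)$ is not jointly concave, so the ``concave mobility $\Rightarrow$ jointly convex action'' mechanism of \cite{DolbeaultNazaretSavare2009,ErbarBoltz,EspPatSchSle} does not apply verbatim. What is immediately available is the pair of separate convexity properties: $U\mapsto\cA(f,U)$ is quadratic, hence convex, and $f\mapsto\cA(f,U)$ is convex because $(a,b)\mapsto(ab)^{-1}$ is jointly convex on $(0,\infty)^2$. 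To obtain the genuinely joint estimate I would either restrict to the antisymmetric flux representatives of \cref{prop:antisymm-flux}, which in the existence construction are explicitly bilinear in $f$ and a base-space flux $C$, using the mean-zero constraint on $C$ to absorb the residual Jensen term; or else apply the statement at the level of the measure pair $(f\otimes f,U)$, where joint convexity and $1$-homogeneity of $\alpha$ — already invoked via \cite{But89} in \cref{prop:lsc-action} — close the argument directly. Reconciling the Jensen re-weighting with the mean-zero constraint is, I expect, the technical heart of a complete proof.
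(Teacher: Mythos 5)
You have correctly located the crux, and the honest answer is that it cannot be overcome: the pointwise estimate you are missing is not merely difficult, it is false, and so is the proposition as stated. The paper's own proof commits exactly the substitution you were trying to avoid: after setting $\mu^i=f^i\otimes f^i$ it declares $\tilde\mu_\tau=(1-\tau)\tilde\mu^0+\tau\tilde\mu^1$ to be the density of $\mu_\tau$, whereas
\begin{equation*}
 f_\tau\otimes f_\tau-(1-\tau)\,f^0\otimes f^0-\tau\,f^1\otimes f^1=-\tau(1-\tau)\,(f^0-f^1)\otimes(f^0-f^1)
\end{equation*}
is a signed measure with no definite sign, so neither the monotonicity of $\alpha(\cdot,u)$ nor the joint convexity of $\alpha$ can absorb the discrepancy --- this is precisely the unfavourable Jensen reweighting you describe. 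Your second fallback, working at the level of the pair $(f\otimes f,U)$, therefore reproduces the paper's computation, but it proves a different (and correct) statement: convexity of the action along $\mu_\tau=(1-\tau)\mu^0+\tau\mu^1$, $U_\tau=(1-\tau)U^0+\tau U^1$, not along $f_\tau=(1-\tau)f^0+\tau f^1$.

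That the $(f,U)$-version genuinely fails can be seen on three atoms: take distinct $a,b,c\in\R$, $f^0=0.3\,\delta_a+0.3\,\delta_b+0.4\,\delta_c$, $f^1=0.45\,\delta_a+0.45\,\delta_b+0.1\,\delta_c$, $U^0=\delta_{(a,b)}$, $U^1=1.6\,\delta_{(a,b)}$, $\tau=\tfrac12$. For $U=u\,\delta_{(a,b)}$ the action reduces to $\sigma_e(\abs{a-b})\,u^2/(f(a)f(b))$, and with $\sigma:=\sigma_e(\abs{a-b})$ one finds $\cA(f_{1/2},U_{1/2})=\sigma\,(1.3)^2/(0.375)^2\approx 12.02\,\sigma$, while $\tfrac12\cA(f^0,U^0)+\tfrac12\cA(f^1,U^1)\approx\tfrac12(11.11+12.64)\,\sigma\approx 11.88\,\sigma$. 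Antisymmetrising as in \cref{prop:antisymm-flux} (placing $\mp u^i$ on $(b,a)$) merely doubles both sides, so your first fallback does not rescue the inequality either; the obstruction is, as you suspected, that the mobility $(f(v),f(v_*))\mapsto f(v)f(v_*)$ is not concave, and $(x,y,u)\mapsto u^2/(xy)$ is not jointly convex. What does survive is exactly what you list: convexity in $U$ for fixed $f$, convexity in $f$ for fixed $U$ (via convexity of $(x,y)\mapsto(xy)^{-1}$), and the joint convexity in the pair $(f\otimes f,U)$; since \cref{prop:convexity-action} is not invoked later in the paper (the direct method runs on \cref{prop:lsc-action} and \cref{prop:compactCRE}, and the tangent-space argument only needs strict convexity in $U$), restating the proposition in the $(\mu,U)$ form is the appropriate repair rather than completing your pointwise argument.
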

\begin{proof}
Let us set $\mu^i:=f^i\otimes f^i$ and consider $|\lambda|\in\cM^+(\Rdiag)$ such that $\dx{\mu^i}=\tilde{\mu}^i \dx{|\lambda|}$ and $\dx{U^i}=\tilde{U}^i\dx{|\lambda|}$, cf. Definition \ref{def:action-functional}, for instance. As consequence we have $\dx{\mu_\tau}=\tilde{\mu}_\tau \dx{|\lambda|}$ and $\dx{U_\tau}=\tilde{U}_\tau\dx{|\lambda|}$, where
\begin{align}
    &\tilde \mu_\tau:=(1-\tau)\tilde{\mu}^0+\tau \tilde{\mu}^1,\\
    &\tilde{U}_\tau:=(1-\tau)\tilde{U}^0 + \tau \tilde{U}^1.
\end{align}
The result follows by using the convexity of the function $\alpha$:
\begin{align}
    \cA(f_\tau,U_\tau)
    &=\iint_{\Rdiag}\alpha\left(\tilde \mu_\tau,\tilde U_\tau\right)\sigma_e(\abs{v-v_*})\,\dx{|\lambda|}(v,v_*)\\
    &\le(1-\tau)\iint_{\Rdiag}\alpha\left(\tilde{\mu}^0,\tilde{U}^0\right)\sigma_e(\abs{v-v_*})\,\dx{|\lambda|}(v,v_*)\\
    &\quad+\tau\iint_{\Rdiag}\alpha\left(\tilde{\mu}^1,\tilde{U}^1\right)\sigma_e(\abs{v-v_*})\,\dx{|\lambda|}(v,v_*)\\
    &=(1-\tau)\cA(f^0,U^0)+\tau\cA(f^1,U^1). \qedhere
\end{align}
\end{proof}

\subsection{Curves of finite action}
This section is dedicated to revisiting~\eqref{eq:gce} introduced in Definition~\ref{def:CRE} and presenting some of its properties.
\begin{lem}[Curves of finite action]\label{lem:finite_action:GCE}
  Let $\set{(f_t,U_t)}_{t\in [0,T]}$ be a solution to the nonlocal-local continuity equation in the sense of Definition~\ref{def:CRE} with initial datum $\mu_0 \in \cP(\R)$ not necessarily satisfying the integrability condition~\eqref{eq:integrability-cond}, but satisfying $\int_0^T \cA(f_t,U_t) \dx{t} < \infty$ and $\int_\R |v| \dx \mu_0(v)< \infty$, then $\set{(f_t,U_t)}_{t\in [0,T]}\in \GCE_T(\mu_0)$. 

  In particular, if $\mu_0\in \cP_1(\R)$, then $f_t \in \cP_1(\R)$ and the following estimate holds for all $t\in [0,T]$ 
  \begin{equation}\label{e:m1:action}
    \abs*{\pderiv{}{t} m_1(f_t)^{\frac{1}{2}}} \leq \bra*{\frac{1-e}{2}}^{\frac12} \cA(f_t,U_t)^{\frac{1}{2}}.
  \end{equation}
\end{lem}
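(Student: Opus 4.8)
The plan is to control the first moment $m_1(f_t):=\int_\R\abs{v}\dx{f_t}(v)$ by testing the generalised continuity equation against (truncations of) the function $\abs{\cdot}$ and combining the resulting identity with the structure of finite-action fluxes from \cref{prop:action}; once $m_1(f_t)$ is known to be finite and bounded on $[0,T]$, the integrability condition~\eqref{eq:integrability-cond} follows from a Cauchy--Schwarz estimate, which upgrades $\set{(f_t,U_t)}_{t\in[0,T]}$ to an element of $\GCE_T(\mu_0)$. Recall that $\sigma_e(\abs{v-v_*})=\tfrac{1-e}{4}\abs{v-v_*}$.

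Since $\int_0^T\cA(f_t,U_t)\dx{t}<\infty$ we have $\cA(f_t,U_t)<\infty$ for a.e.\ $t$, so by \cref{prop:action} there is a Borel $\hat U_t$ with $\dx{U_t}=\hat U_t\dx{(f_t\otimes f_t)}$ and $\cA(f_t,U_t)=\iint_{\Rdiag}\abs{\hat U_t}^2\sigma_e\dx{(f_t\otimes f_t)}$; this representation is what makes the Cauchy--Schwarz estimates below available. Next I would insert the Lipschitz, compactly supported functions $\abs{\varphi_R}$ built from~\eqref{eq:space-cutoff} into the weak form of~\eqref{eq:gce}; they are admissible by \cref{rem:w1inf}, satisfy $0\le\abs{\varphi_R}\le\abs{\cdot}$ and $\abs{\varphi_R}\uparrow\abs{\cdot}$ pointwise, and $\abs{\tilde\nabla\abs{\varphi_R}}\le2$. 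Arguing exactly as in the proof of the continuous-representative proposition (time mollification of~\eqref{eq:CREweakform}) yields, for $0\le t_1<t_2\le T$,
\[
  \int\abs{\varphi_R}\dx{f_{t_2}}-\int\abs{\varphi_R}\dx{f_{t_1}}=\int_{t_1}^{t_2}\iint_{\Rdiag}\tilde\nabla\abs{\varphi_R}(v,v_*)\,\sigma_e(\abs{v-v_*})\dx{U_s(v,v_*)}\dx{s} .
\]
Writing $\dx{U_s}=\hat U_s\dx{(f_s\otimes f_s)}$, using Cauchy--Schwarz in $L^2(\Rdiag,\sigma_e\dx{(f_s\otimes f_s)})$ together with the pointwise bound $\abs{\tilde\nabla\abs{\varphi_R}}^2\sigma_e(\abs{v-v_*})\le(1-e)(\abs{v}+\abs{v_*})$ and $\iint(\abs v+\abs{v_*})\dx{(f_s\otimes f_s)}=2m_1(f_s)$, the right-hand side is bounded — uniformly in $R$ — by $(2(1-e))^{1/2}\int_{t_1}^{t_2}\cA(f_s,U_s)^{1/2}m_1(f_s)^{1/2}\dx{s}$.

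Letting $R\to\infty$ by monotone convergence (and taking $t_1=0$ with $f_0=\mu_0$) produces the integral inequality
\[
  m_1(f_t)\le m_1(\mu_0)+\bigl(2(1-e)\bigr)^{1/2}\int_0^t\cA(f_s,U_s)^{1/2}m_1(f_s)^{1/2}\dx{s},\qquad t\in[0,T] ,
\]
to be read in $[0,+\infty]$. Comparison with the scalar initial value problem $\dot y=(2(1-e))^{1/2}\cA(f_t,U_t)^{1/2}\,y^{1/2}$, $y(0)=m_1(\mu_0)$ — a sublinear (Bihari) Gronwall argument that is legitimate because $\cA(f_\cdot,U_\cdot)^{1/2}\in L^2(0,T)\subset L^1(0,T)$ — gives
\[
  m_1(f_t)^{1/2}\le m_1(\mu_0)^{1/2}+\Bigl(\tfrac{1-e}{2}\Bigr)^{1/2}\int_0^t\cA(f_s,U_s)^{1/2}\dx{s}<\infty ,
\]
so $f_t\in\cP_1(\R)$ for every $t$ and $K:=\sup_{t\in[0,T]}m_1(f_t)<\infty$. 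Knowing $m_1(f_s)^{1/2}\le K^{1/2}$, the bound from the previous step (after $R\to\infty$) gives $\abs{m_1(f_t)-m_1(f_{t_1})}\le(2(1-e))^{1/2}\int_{t_1}^{t}\cA(f_s,U_s)^{1/2}m_1(f_s)^{1/2}\dx{s}$, so $t\mapsto m_1(f_t)$ is absolutely continuous with $\abs{\tfrac{\dx}{\dx{t}}m_1(f_t)}\le(2(1-e))^{1/2}\cA(f_t,U_t)^{1/2}m_1(f_t)^{1/2}$ a.e.; approximating $\sqrt{m_1(f_t)}$ by $\sqrt{m_1(f_t)+\eps}$ upgrades this to $\abs{\tfrac{\dx}{\dx{t}}m_1(f_t)^{1/2}}\le(\tfrac{1-e}{2})^{1/2}\cA(f_t,U_t)^{1/2}$, which is~\eqref{e:m1:action}. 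Finally, $\dx{U_t}=\hat U_t\dx{(f_t\otimes f_t)}$, Cauchy--Schwarz and $\iint_{\Rdiag}\sigma_e\dx{(f_t\otimes f_t)}\le\tfrac{1-e}{2}m_1(f_t)\le\tfrac{1-e}{2}K$ yield $\iint_{\Rdiag}\sigma_e\dx{\abs{U_t}}\le\bigl(\tfrac{1-e}{2}K\bigr)^{1/2}\cA(f_t,U_t)^{1/2}$, and integrating in $t$ and applying Cauchy--Schwarz once more shows $\int_0^T\iint_{\Rdiag}\sigma_e(\abs{v-v_*})\dx{\abs{U_t}}\dx{t}<\infty$, i.e.\ the integrability condition~\eqref{eq:integrability-cond} holds; hence $\set{(f_t,U_t)}_{t\in[0,T]}\in\GCE_T(\mu_0)$.

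The main obstacle is the circularity hidden in the moment estimate: the clean bound $\abs{\tfrac{\dx}{\dx{t}}m_1(f_t)}\lesssim\cA(f_t,U_t)^{1/2}m_1(f_t)^{1/2}$ already presupposes $m_1(f_t)<\infty$, which is part of what has to be proved, and since $\tilde\nabla\psi$ has unbounded support in $\Rdiag$ for every non-affine $\psi$ there is no way to localise the estimate so as to drop the factor $m_1(f_s)$ on the right. This forces the two-stage scheme: first work with the a priori finite truncated moments $\int\abs{\varphi_R}\dx{f_t}\le 2R$, obtain a self-consistent inequality for $m_1(f_t)$ only after the monotone limit $R\to\infty$, and then close it by a Gronwall comparison which tolerates $+\infty$ values yet still forces finiteness thanks to $\cA(f_\cdot,U_\cdot)^{1/2}\in L^2(0,T)$. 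A secondary, routine point is the admissibility of the non-$C^1$ functions $\abs{\varphi_R}$ in the weak form, which is handled by mollification: each $\abs{\varphi_R}$ has $v$-support in $[-2R,2R]$, so by the first step only a finite amount of flux mass enters the identity.
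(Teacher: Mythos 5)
Your core argument is the paper's own: the paper proves the lemma by combining the first-moment bound \eqref{e:m1:GCE} of Proposition~\ref{prop:GCE:centre_mass} (itself obtained there by testing with the same truncations built from~\eqref{eq:space-cutoff}) with the representation $\dx U_t=\hat U_t\,\dx(f_t\otimes f_t)$ of Lemma~\ref{prop:action}, Cauchy--Schwarz in $L^2(\sigma_e\dx(f_t\otimes f_t))$, and the bound $\sigma_e(|v-v_*|)\le\tfrac{1-e}{4}(|v|+|v_*|)$. Your constants, the resulting differential inequality for $m_1(f_t)^{1/2}$, and the concluding Cauchy--Schwarz step giving the integrability condition~\eqref{eq:integrability-cond} all coincide with that computation, so the route is the same, only written out in more detail.

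The place where your added detail does not deliver what it promises is exactly the circularity you single out as ``the main obstacle''. First, compact $v$-support of $|\varphi_R|$ does not imply that ``only a finite amount of flux mass enters the identity'': $\tilde\nabla|\varphi_R|$ is nonzero on an unbounded subset of $\Rdiag$ (e.g.\ $v\in(0,R)$ with $v_*$ arbitrary), so the right-hand side of the truncated identity is only known to be absolutely convergent once one already controls $\iint\sigma_e\dx|U_s|$, which is essentially the quantity being proven finite --- a point you yourself make when noting that the estimate cannot be localised. Second, the Bihari comparison ``read in $[0,+\infty]$'' does not force finiteness: the function equal to $m_1(\mu_0)$ at $t=0$ and to $+\infty$ for $t>0$ satisfies your integral inequality whenever $\cA(f_\cdot,U_\cdot)^{1/2}$ is not a.e.\ zero, so integrability of $\cA^{1/2}$ alone cannot rule it out; a comparison principle of this type needs the unknown to be finite (or at least locally bounded) a priori. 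In fairness, the paper's published proof is no more careful --- it differentiates $m_1(f_t)$ directly, tacitly assuming its finiteness and absolute continuity --- so your attempt is not weaker than the paper's; but the a priori finiteness of $m_1(f_t)$ for $t>0$, which you correctly identify as the crux, is not actually established by the two devices you invoke, and closing it rigorously would require an additional idea beyond what either you or the paper provide.
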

\begin{proof}
  The proof follows by applying the bound~\eqref{e:m1:GCE} in Proposition~\ref{prop:GCE:centre_mass} for which we  further need to bound, for almost every $t\in [0,T]$, the total variation norm of the flux by a suitable Cauchy-Schwarz inequality:
  \begin{align}
     \frac{1}{2} \abs*{\pderiv{}{t} m_1(f_t)} &\leq \iint_{\Rdiag} \sigma_e(|v-v_*|) \dx |U_t|(v,v_*) \\
     &= \iint_{\Rdiag} \sigma_e(|v-v_*|)  |\hat U_t(v,v_*)| \dx(f_t\otimes f_t)(v,v_*) \\
     &\leq \cA(f_t,U_t)^{\frac{1}{2}} \bra*{\iint_{\Rdiag} \sigma_e(|v-v_*|) \dx (f_t\otimes f_t)(v,v_*) }^{\frac{1}{2}} \\
     &\leq \bra*{\frac{1-e}{4}}^{\frac12} \cA(f_t,U_t)^{\frac{1}{2}} \bra*{\iint_{\Rdiag} (|v| + |v_*|)\dx (f_t\otimes f_t)(v,v_*)}^{\frac{1}{2}} \\
     &\leq \bra*{\frac{1-e}{2}}^{\frac12} m_1(f_t)^{\frac{1}{2}} \cA(f_t,U_t)^{\frac{1}{2}} . \qedhere
  \end{align}
\end{proof}
In the next result, we associate to a given curve $(U_t)_{t\in[0,T]}$ a measure $U\in\cM\bra{\pra{0,T} \times\Rdiag}$ by setting $\dx U(t,v,v_*)=\dx U_t(v,v_*)\dx{t}$, for $(t,v,v_*)\in[0,T]\times\Rdiag$.
\begin{prop}[Compact subsets of $\GCE_T$]\label{prop:compactCRE}
Let $\set{(f_t^n,U_t^n)_{t\in [0,T]}}_{n \in \N} \subset \GCE_T(f_0^n,f_T^n)$ 
and assume there exists a constant $0<C<\infty$ such that
\begin{align}\label{ass:compact:finite:action}
    \sup_{n \in \N}\int_0^T \cA(f_t^n,U_t^n) \dx{t}  < C , \qquad \text{and} \qquad \sup_{n \in \N}  \int |v| \dx(f_0^n+ f_T^n)(v) < C \, .
\end{align}
Then, there exists $\set{(f_t,U_t)}_{t\in [0,T]} \in \GCE_T(f_0,f_T)$, and, for all $t \in [0,T]$, along a subsequence (not relabelled)
\begin{align}
    f_t^n &\to f_t, \quad\text{ in } \cP(\R),
\intertext{as well as}
    U^n &\to^c U, \quad\text{ in } \cM_\loc\bra{\pra{0,T} \times\Rdiag}.
\end{align}
Moreover, the \emph{action} is lower semicontinuous along the above subsequences $\{f^n\}_n$ and $\{U^n\}_n$, i.e.,
\[
    \liminf_{n\to\infty}\int_0^T\cA(f_t^n,U_t^n)\dx t\ge\int_0^T\cA(f_t,U_t)\dx t.
\]
\end{prop}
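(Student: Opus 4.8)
The plan is to follow the now-standard Benamou–Brenier–style compactness scheme adapted to the nonlocal structure at hand, as done in \cite{DolbeaultNazaretSavare2009, Erb14, ErbarBoltz, EspPatSchSle}, exploiting the lower semicontinuity and convexity of $\cA$ proved in \cref{prop:lsc-action,prop:convexity-action} together with the first-moment bound \eqref{e:m1:action} from \cref{lem:finite_action:GCE}.

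\textbf{Step 1: tightness and compactness for the densities.} First I would extract a narrowly convergent subsequence of $\set{f^n_\cdot}$. The uniform action bound in \eqref{ass:compact:finite:action} combined with \eqref{e:m1:action} of \cref{lem:finite_action:GCE} gives a uniform bound on $\sup_{t}m_1(f^n_t)^{1/2} \le m_1(\mu_0^n)^{1/2} + (\tfrac{1-e}{2})^{1/2}\int_0^T\cA(f^n_t,U^n_t)^{1/2}\dx t$, hence $\sup_n\sup_{t\in[0,T]} m_1(f^n_t) < \infty$, which yields tightness of $\set{f^n_t}$ uniformly in $t$ and $n$. Moreover \eqref{e:m1:action} shows $t\mapsto m_1(f^n_t)^{1/2}$ is equi-H\"older-$1/2$-continuous (after a Cauchy–Schwarz in $t$), so the curves are equi-narrowly-continuous in a metrisable-on-tight-sets sense; a refined Arzel\`a–Ascoli argument (as in \cite[Prop.~3.3.1]{AGS08}) produces a subsequence and a limit curve $f_t \in \cP(\R)$ with $f^n_t \to f_t$ narrowly for \emph{every} $t\in[0,T]$, and in particular $\mu_0^n\to\mu_0$, $\mu_T^n\to\mu_T$.

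\textbf{Step 2: compactness for the fluxes.} Consider $U^n \in \cM([0,T]\times\Rdiag)$ defined by $\dx U^n(t,v,v_*) = \dx U^n_t(v,v_*)\dx t$. By \cref{prop:action} (or Cauchy–Schwarz as in the proof of \cref{lem:finite_action:GCE}), for any compact $K \Subset [0,T]\times\Rdiag$ one has $\iint_K \dx|U^n| \le C_K \, \big(\int_0^T\cA(f^n_t,U^n_t)\dx t\big)^{1/2}$, using that $\sigma_e$ is bounded below by a positive constant on $K$ and that $f^n_t\otimes f^n_t$ has mass one; this gives a uniform bound on $|U^n|(K)$, hence vague precompactness, so along a further subsequence $U^n \to^c U$ in $\cM_\loc([0,T]\times\Rdiag)$, and one checks $U$ disintegrates as $\dx U(t,v,v_*) = \dx U_t(v,v_*)\dx t$ with $U\in\Leb^1(0,T;\cM(\Rdiag))$ (the global $\Leb^1$-in-time bound on $\iint\sigma_e\dx|U_t|$ following from the lsc of the action, Step 4, plus the finite-action and moment bounds as in \cref{lem:finite_action:GCE}).

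\textbf{Step 3: passing to the limit in the weak formulation.} For fixed $\varphi\in C^1_c((0,T)\times\R)$ the test term $\iint_{\Rdiag}\sigma_e(|v-v_*|)\tilde\nabla\varphi_t(v,v_*)$ is a function in $C_c((0,T)\times\Rdiag)$ (it has compact support in $v$ and vanishes on the diagonal like $|v-v_*|$, so it extends continuously by $0$), hence the vague convergence $U^n\to^c U$ passes to the limit in the flux term of \eqref{eq:CREweakform}; the narrow convergence $f^n_t\to f_t$ with the uniform moment bound handles $\int_0^T\int\partial_t\varphi_t\dx f^n_t\dx t$. Thus $(f,U)$ solves \eqref{eq:gce} in the sense of \cref{def:CRE}, and together with the first-moment bound it lies in $\GCE_T(\mu_0,\mu_T)$.

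\textbf{Step 4: lower semicontinuity of the action.} Finally, $\int_0^T\cA(f^n_t,U^n_t)\dx t = \cA\big(f^n\otimes f^n, U^n\big)$ can be read as an integral functional of the type $\iint g\big((t,v,v_*),(\tfrac{\dx(f^n\otimes f^n)}{\dx|\lambda^n|},\tfrac{\dx U^n}{\dx|\lambda^n|})\big)\dx|\lambda^n|$ with $g$ lsc, jointly convex and $1$-homogeneous in the last pair, exactly as in \cref{prop:lsc-action}; since $f^n_t\otimes f^n_t \to f_t\otimes f_t$ narrowly for each $t$ (hence $\dx(f^n\otimes f^n)\dx t \to \dx(f\otimes f)\dx t$ narrowly on $[0,T]\times\Rdiag$) and $U^n\to^c U$, an application of \cite[Theorem 3.4.3]{But89} (Ioffe-type lsc, in the vague/narrow topology on a $\sigma$-compact space) gives $\liminf_{n}\int_0^T\cA(f^n_t,U^n_t)\dx t \ge \int_0^T\cA(f_t,U_t)\dx t$.

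\textbf{Main obstacle.} The delicate point is that $\sigma_e(|v-v_*|)=\tfrac{1-e}{4}|v-v_*|$ degenerates both on the diagonal $v=v_*$ (where $\Rdiag$ is not closed) and grows at infinity, so neither $\sigma_e$ nor $1/\sigma_e$ is bounded: the flux bound $\iint\sigma_e\dx|U^n|$ controls $|U^n|$ only \emph{locally} on $\Rdiag$ (away from the diagonal and from infinity), which is why the flux convergence can only be claimed in the vague topology $\to^c$ rather than weak-$^*$, and one must be careful that no mass of $U_t$ escapes to the diagonal or to infinity when identifying the limit and when verifying the global $\Leb^1$-in-time integrability \eqref{eq:integrability-cond}; this last point is recovered precisely from the lower semicontinuity of the action in Step 4 combined with $\iint\sigma_e\dx(f_t\otimes f_t) \le \tfrac{1-e}{2} m_1(f_t) < \infty$ and Cauchy–Schwarz, as in the proof of \cref{lem:finite_action:GCE}.
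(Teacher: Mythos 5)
Your skeleton is the same as the paper's (Cauchy--Schwarz bounds for the total variation of $U^n$ on compact subsets of $[0,T]\times\Rdiag$, Prokhorov/vague compactness and disintegration in time for the fluxes, an Arzel\`a--Ascoli argument for the densities, and Ioffe/Buttazzo-type lower semicontinuity as in \cref{prop:lsc-action}), but two of your steps do not hold as written.

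First, in Step 1 the equicontinuity of the curves is a non sequitur: the bound \eqref{e:m1:action} only controls the modulus of continuity of the scalar function $t\mapsto m_1(f^n_t)$, and equicontinuity of the first moments does not imply equicontinuity of $t\mapsto f^n_t$ in the narrow (or $d_1$) topology, which is what the refined Arzel\`a--Ascoli theorem needs. The missing ingredient is exactly the paper's estimate \eqref{eq:W1:comp}: apply the same Cauchy--Schwarz argument to an \emph{arbitrary} test function $\psi$ with $\norm{\psi'}_\infty\le 1$, use the uniform first-moment bound to control $\iint(|v|+|v_*|)\dx(f^n_t\otimes f^n_t)$, and conclude $d_1(f^n_{t_1},f^n_{t_2})\le C|t_2-t_1|^{1/2}$ uniformly in $n$; only then does Arzel\`a--Ascoli give $f^n_t\to f_t$ for all $t$. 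You have all the ingredients but apply them only to $\varphi(v)=|v|$.

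Second, in Step 3 the claim that $\sigma_e(|v-v_*|)\tilde\nabla\varphi_t(v,v_*)$ lies in $C_c((0,T)\times\Rdiag)$ is false: it does vanish like $|v-v_*|^2$ near the diagonal (since $|\tilde\nabla\varphi_t|\le \Lip(\partial_v\varphi_t)|v-v_*|$), but it is neither compactly supported nor bounded, because for $v\in\supp\partial_v\varphi_t$ fixed and $|v_*|\to\infty$ it behaves like $\tfrac{1-e}{4}|v-v_*|\,|\partial_v\varphi_t(v)|$, which grows linearly. Hence vague convergence $U^n\to^c U$ alone does not let you pass to the limit in the flux term of \eqref{eq:CREweakform}; you need a truncation of the test integrand together with a uniform (in $n$) estimate of the truncated-away contribution, e.g.\ via Cauchy--Schwarz against the action, $\iint_E\sigma_e|\tilde\nabla\varphi_t|\dx|U^n_t|\le \cA(f^n_t,U^n_t)^{1/2}\bigl(\iint_E|\tilde\nabla\varphi_t|^2\sigma_e\dx(f^n_t\otimes f^n_t)\bigr)^{1/2}$, which handles the near-diagonal region directly, while the region at infinity requires a further argument using the moment bounds; simply asserting membership in $C_c$ skips the genuinely delicate point you yourself flag in your ``main obstacle'' paragraph.
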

\begin{proof}
We first show that the total variation measure $\abs{U^n}$ is bounded on compact. We let $I \times K \subset \pra{0,T} \times \Rdiag$ be compacts. 
It is then relatively straightforward to see that
\begin{align}
  \abs{U^n} \bra{I \times K} \leq \int_I \abs{U_t^n}(K) \dx{t} \leq \int_I \int_K |\hat{U}_t^n(v, v_*)|\dx{\bra{f_t^n \otimes f_t^n}}(v,v_*) \dx{t} \, ,
\end{align}
where for the last inequality we have used finiteness of the action and the result of~\cref{prop:action}, which states that $U_t^n$ has a density with respect to $f_t^n\otimes f_t^n$. Upon applying the Cauchy--Schwartz inequality, we obtain the following bound
\begin{align}
    \label{eq:TV-bound}
    \begin{split}
  \abs{U^n} \bra{I \times K} & \leq \bra*{\int_I \int_K |\hat{U}_t^n(v,v_*)|^2 \sigma_e(\abs{v-v_*}) \dx{\bra{f_t^n \otimes f_t^n}}(v,v_*) \dx{t}}^{\frac12}\\ 
  &\qquad \times \bra*{\int_I \int_K \frac{ \dx{\bra{f_t^n \otimes f_t^n}}(v,v_*)}{\sigma_e(\abs{v-v_*})} \dx{t}}^{\frac12} \\
  & \leq  \bra*{\frac{1-e}{2} C_K |I|}^{\frac12} ,
  \end{split}
\end{align}
where $C_K=C \sup_{(v,v_*)\in K} \sigma_e(\abs{v-v_*})^{-1}< \infty$ with $C$ as in~\eqref{ass:compact:finite:action}, since $\sigma_e$ is continuous and positive on $\Rdiag$. 
Since \(I\times K\) was arbitrary, it is clear from the above estimate that we can obtain uniform local control on the total variation of the measures $U^n \in \cM\bra{\pra{0,T} \times\Rdiag}$. Thus by Prokhorov's theorem there exists a measure $U \in \cM\bra{\pra{0,T} \times\Rdiag}$ such that
$U^n \to^c U$, i.e., tested against $C_c([0,T]\times\Rdiag)$.

We now note that $U \in \cM_{\loc}(\pra{0,T} \times \Rdiag)$ can be disintegrated with respect to the Lebesgue measure on $\pra{0,T}$. Indeed, consider for any compact set, $K\subset \Rdiag$, the measure $\lambda^K:= \pi_{\#}^K U \in \cM\bra{\pra{0,T}}$, where $\pi^K: \pra{0,T} \times K \to \pra{0,T} $ is the projection map defined as $\pi^K(t,x):=t$, for $x\in K$. By the definition of the pushforward we have for any measurable $I\subset [0,T]$ from~\eqref{eq:TV-bound} the estimate
\begin{align}
    \lambda^K(I) = U(I \times K) \leq \bra*{\frac{1-e}{2} C_K |I|}^{\frac12}. 
\end{align}
Thus, $\lambda^K$ is absolutely continuous with respect for the Lebesgue measure on $I$, for any $K\subset \Rdiag$ compact. Additionally, for any 
$\varphi \in C_c(\pra{0,T}\times \Rdiag)$ choose $K\subset \Rdiag$ such that $\supp \varphi \subset [0,T]\times K$. By the disintegration theorem, cf. \cite[Theorem 5.3.1]{AGS08}, we have the existence of a family $\set{\mu_t^K}_{t\in [0,T]}$ such that $\dx U = \dx \mu_t^K \dx \lambda^K$. In particular
\begin{align}
    \int_0^T &\iint_{\Rdiag} \varphi(t,v,v_*)\sigma_e(|v-v_*|)\dx{U}(t,v,v_*)\\ 
    &=\int_0^T \bra*{\int_{\set{t} \times \Rdiag} \varphi(t,v,v_*)\sigma_e(|v-v_*|) \dx{\mu_t^K}(v,v_*)} \dx \lambda^K(t)\\
    &=\int_0^T \int_{\Rdiag} \varphi(t,v,v_*)\sigma_e(|v-v_*|) \dx{U_t^K}(v,v_*) \dx t, 
\end{align}
where $U_t^K:=  \frac{\dx{\lambda^K}}{\dx t} \mu_t^K$ and $\mu_t^K \in \cM(K)$ is the parametrised family of measures arising from the disintegration theorem.

We readily observe that integrating~\eqref{eq:GCE:weak_form} over $[t_1,t_2]$ gives for any $\psi\in C_c^1(\Rdiag)$
\begin{equation}\label{eq:W1:comp} \begin{split}
    \bigg|&\int_{\R} \psi(v) \dx{f_{t_1}^n}(v)- \int_{\R} \psi(v) \dx{f_{t_2}^n}(v)\bigg| \leq \int_{t_1}^{t_2} \iint_{\Rdiag} \abs*{\tilde{\nabla} \psi(v,v_*)} \sigma_e(|v-v_*|)  \dx{|U_t^n|}\dx{t}\\
    &\leq \int_{t_1}^{t_2} \iint_{\Rdiag} \abs*{\tilde{\nabla} \psi(v,v_*)} \sigma_e(|v-v_*|) \abs*{\hat{U}_t^n(v, v_*)}\dx{\bra{f_t^n \otimes f_t^n}}(v,v_*)\dx t \\
    &\leq \int_{t_1}^{t_2} \cA(f_t^n,U_t^n)^{\frac12} \bra*{ \iint_{\Rdiag} \abs*{ \tilde{\nabla} \psi(v,v_*)}^2 \sigma_e(|v-v_*|) \dx{\bra{f_t^n \otimes f_t^n}}(v,v_*)}^{\frac12} \dx t \\
    &\leq  \bra*{\frac{1-e}{4}}^{\frac12} \int_{t_1}^{t_2} \cA(f_t^n,U_t^n)^{\frac{1}{2}} \bra*{ \iint_{\Rdiag} \bra*{\psi'(v)-\psi'(v_*)}^2 \bra*{ |v| + |v_*|} \dx f_t^n(v) \dx f_t^n(v_*) }^{\frac12} \dx{t} \\
    &\leq C \norm{ \psi'}_{\infty} | t_2-t_1|^{\frac12}, 
\end{split}\end{equation}
according to \cref{eq:TV-bound}, having used the definition of $\sigma_e$, cf.  \eqref{eq:sigma_e} and applied the stability of the first moment~\eqref{e:m1:action} from Lemma~\ref{lem:finite_action:GCE}, which also ensures that $(f_t^n, U_t^n)_{t\in [0,T]}\in \GCE(f_0^n,f_T^n)$. Passing to the supremum in $\psi$ among all Lipschitz functions with Lipschitz constant 1, we recover the $1/2$-Hölder continuity in the 1-Wasserstein distance, i.e.,
\begin{align}
    d_1(f_{t_2}^n, f_{t_1}^n) \leq C |t_2-t_1|^{\frac12},
\end{align}
uniformly in $n\in \N$. An application of the generalised Arzela-Ascoli theorem concludes the proof of convergence of the densities, see \cite[Section 3]{AGS08}. In particular, we have that the limiting curve is absolutely continuous in time with values in probability measures and hence $(f_t,U_t)_{t\in [0,T]}\in \GCE(f_0,f_T)$. 
Finally, the lower semicontinuity property is a consequence of \cref{prop:lsc-action}.
\end{proof}

\subsection{The collision metric}\label{sec:metric}

In this section, we define and prove properties for an extended metric coming  from the nonlocal-local continuity equation. We start with the definition of the collision transportation cost.

\begin{defn}\label{def:metric}
Let $\mu_0,\mu_1\in\cP(\R)$. The \emph{collision transportation cost} is defined by
\begin{equation}
    \label{eq:def-metric}
    d_{\cA}(\mu_0, \mu_1)^2 := \inf\left\{\int_0^1\cA(f_t,U_t)\,\dx{t}: (f_t,U_t)_{t\in [0,1]}\in\GCE(\mu_0,\mu_1)\right\}.
\end{equation}
\end{defn}
Note that the minimisation problem above is well defined as consequence of the direct method of calculus of variations by means of \cref{prop:compactCRE}, whenever the action is bounded, i.e.,  $\int_0^1\cA(f_t,U_t)\,\dx{t}<\infty$. Moreover, by observing that $\alpha$ defined in~\eqref{eq:alpha-fun} is $2$-homogeneous in the second variable, we can apply the same reparametrisation argument used in \cite[Theorem 5.4]{DolbeaultNazaretSavare2009} to obtain the following result.
\begin{lem}[Reparametrisation]\label{lem:reparametrisation}
For any $T>0$, $\mu_0,\mu_1\in\cP(\R)$ it holds
\begin{align}
    d_{\cA}(\mu_0,\mu_1)=\inf\left\{\int_0^T \cA(f_t,U_t)^\frac12\,\dx{t}: (f_t,U_t)_{t\in [0,T]}\in\GCE_T(\mu_0,\mu_1)\right\}.
\end{align}
\end{lem}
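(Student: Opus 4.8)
The plan is to carry out the standard Benamou--Brenier reparametrisation argument, in the form used in \cite[Theorem~5.4]{DolbeaultNazaretSavare2009}; write $\cR_T(\mu_0,\mu_1)$ for the right-hand side. Two structural facts are the engine of the proof. \emph{First}, the action scales quadratically in the flux: by \cref{prop:action} together with the $2$-homogeneity of $\alpha$ from~\eqref{eq:alpha-fun} in its second argument, one has $\cA(f,\theta U)=\theta^2\,\cA(f,U)$ for all $\theta\ge 0$. \emph{Second}, the class $\GCE$ is stable under time-reparametrisation: if $\mathfrak{s}\colon[0,T']\to[0,T]$ is nondecreasing, absolutely continuous and onto with absolutely continuous inverse, then $\bigl(f_{\mathfrak{s}(\tau)},\,\mathfrak{s}'(\tau)\,U_{\mathfrak{s}(\tau)}\bigr)_{\tau\in[0,T']}\in\GCE_{T'}(\mu_0,\mu_1)$, which follows directly from the weak formulation in \cref{def:CRE} (the integrability condition~\eqref{eq:integrability-cond} is invariant under this change of variables, and the reparametrised curve is again narrowly continuous).

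For the bound $\cR_T\le d_\cA$, I would take any $(f_t,U_t)_{t\in[0,1]}\in\GCE_1(\mu_0,\mu_1)$ with $\int_0^1\cA(f_t,U_t)\dx t<\infty$, rescale time linearly onto $[0,T]$ (second fact with $\mathfrak{s}(\tau)=\tau/T$), and combine the first fact with the Cauchy--Schwarz inequality on the unit interval:
\begin{align*}
 \int_0^T\cA\bigl(f_{\tau/T},\,T^{-1}U_{\tau/T}\bigr)^{1/2}\dx\tau
 =\int_0^1\cA(f_t,U_t)^{1/2}\dx t
 \le\Bigl(\int_0^1\cA(f_t,U_t)\dx t\Bigr)^{1/2};
\end{align*}
taking the infimum over such curves yields $\cR_T\le d_\cA$.

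The reverse inequality $d_\cA\le\cR_T$ is the heart of the matter and uses the \emph{constant-speed} reparametrisation. Fix $(f_t,U_t)_{t\in[0,T]}\in\GCE_T(\mu_0,\mu_1)$ with $L:=\int_0^T\cA(f_t,U_t)^{1/2}\dx t<\infty$; the degenerate case $L=0$ forces the curve to be constant with $\mu_0=\mu_1$ and is trivial. To avoid dividing by zero, regularise: set $a_\delta(t):=\cA(f_t,U_t)+\delta$, $L_\delta:=\int_0^T a_\delta(t)^{1/2}\dx t\in(0,\infty)$, and define the strictly increasing, bi-Lipschitz time change $s\colon[0,T]\to[0,1]$, $s(t):=L_\delta^{-1}\int_0^t a_\delta(r)^{1/2}\dx r$, with inverse $\mathfrak{t}:=s^{-1}$ satisfying $\mathfrak{t}'(s(t))=L_\delta\,a_\delta(t)^{-1/2}$. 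By the second fact, $(g_s,V_s):=\bigl(f_{\mathfrak{t}(s)},\,\mathfrak{t}'(s)\,U_{\mathfrak{t}(s)}\bigr)_{s\in[0,1]}\in\GCE_1(\mu_0,\mu_1)$, and by the first fact together with the substitution $t=\mathfrak{t}(s)$,
\begin{align*}
 \int_0^1\cA(g_s,V_s)\dx s
 =\int_0^1\mathfrak{t}'(s)^2\,\cA\bigl(f_{\mathfrak{t}(s)},U_{\mathfrak{t}(s)}\bigr)\dx s
 =\int_0^T\mathfrak{t}'(s(t))\,\cA(f_t,U_t)\dx t
 =L_\delta\int_0^T\frac{\cA(f_t,U_t)}{a_\delta(t)^{1/2}}\dx t
 \le L_\delta^2 ,
\end{align*}
using $\cA(f_t,U_t)\le a_\delta(t)$ in the last step. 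In particular $(g_s,V_s)$ has finite action, hence is admissible in~\eqref{eq:def-metric}, so that $d_\cA(\mu_0,\mu_1)^2\le L_\delta^2$; letting $\delta\downarrow 0$ (dominated convergence, using $a_\delta(t)^{1/2}\le\cA(f_t,U_t)^{1/2}+1$ for $\delta\le1$ and $L<\infty$) gives $d_\cA(\mu_0,\mu_1)^2\le L^2$, and passing to the infimum over $(f_t,U_t)$ concludes $d_\cA\le\cR_T$.

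I expect the only real difficulty to be the bookkeeping in the constant-speed reparametrisation: dealing with the possible degeneracy of $t\mapsto\cA(f_t,U_t)$ on a set of positive measure (handled by the $\delta$-regularisation above), and verifying rigorously that the reparametrised pair remains an element of $\GCE$ — that is, that the weak formulation of \cref{def:CRE} and the integrability bound~\eqref{eq:integrability-cond} are both preserved under an absolutely continuous change of the time variable with bounded derivative. The remaining manipulations are routine.
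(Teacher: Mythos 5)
Your proposal is correct and is essentially the argument the paper has in mind: the paper proves nothing beyond noting the $2$-homogeneity of $\alpha$ in its second variable and invoking the reparametrisation argument of \cite[Theorem 5.4]{DolbeaultNazaretSavare2009}, which is exactly the Cauchy--Schwarz bound plus constant-speed (arc-length, $\delta$-regularised) reparametrisation you carry out. Your verification that the weak formulation, the integrability condition~\eqref{eq:integrability-cond}, and the quadratic scaling of $\cA$ in the flux are preserved under the monotone absolutely continuous time change fills in the details the paper leaves to the citation.
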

In the following proposition we see under which conditions the infimum in \cref{eq:def-metric} is a minimum.
\begin{prop}
    \label{prop:metric:min}
    Let $\mu_0,\mu_1\in\cP(\R)$ such that $d_\cA:=d_\cA(\mu_0,\mu_1)<+\infty$. Then the infimum in \cref{eq:def-metric} is attained by a curve $(f_t,U_t)_{t\in [0,1]}\in\GCE(\mu_0,\mu_1)$ such that 
$$
    \cA(f_t,U_t)=d_\cA^2(\mu_0,\mu_1),
$$ 
for a.e. $t\in[0,1]$. Such a curve is a constant speed geodesic for $d_\cA$, i.e.,
\[
    d_\cA(f_s,f_t)=|t-s|d_\cA(\mu_0,\mu_1),
\]
for all  $s,t\in[0,1]$.
\end{prop}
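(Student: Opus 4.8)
The plan is to follow the standard Benamou--Brenier scheme as in \cite{DolbeaultNazaretSavare2009, Erb14, ErbarBoltz}. First I would fix $\mu_0,\mu_1$ with $d_\cA(\mu_0,\mu_1)<\infty$ and work on the time interval $[0,1]$. Pick a minimising sequence $(f^n_t, U^n_t)_{t\in[0,1]}\in\GCE(\mu_0,\mu_1)$ so that $\int_0^1\cA(f^n_t,U^n_t)\dx t\to d_\cA^2$. In particular the actions are uniformly bounded, and since the endpoints $\mu_0,\mu_1$ are fixed the first moments of the endpoints are trivially bounded (we may reduce to $\mu_0,\mu_1\in\cP_1(\R)$ since otherwise $d_\cA=\infty$ by \cref{lem:finite_action:GCE}). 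Hence the compactness result \cref{prop:compactCRE} applies: along a subsequence $f^n_t\to f_t$ in $\cP(\R)$ for all $t$, $U^n\to^c U$ locally, the limit $(f_t,U_t)_{t\in[0,1]}\in\GCE(\mu_0,\mu_1)$, and by the lower semicontinuity statement therein $\int_0^1\cA(f_t,U_t)\dx t\le\liminf_n\int_0^1\cA(f^n_t,U^n_t)\dx t=d_\cA^2$. The reverse inequality is immediate since $(f_t,U_t)$ is an admissible competitor, so the infimum is attained by $(f_t,U_t)$.

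Next I would upgrade the minimiser to a constant-speed geodesic. The key tool is \cref{lem:reparametrisation}: since $d_\cA(\mu_0,\mu_1)=\inf\int_0^1\cA(f_t,U_t)^{1/2}\dx t$ over admissible curves, and since by Jensen (Cauchy--Schwarz in time) one always has $\big(\int_0^1\cA(f_t,U_t)^{1/2}\dx t\big)^2\le\int_0^1\cA(f_t,U_t)\dx t$ with equality iff $t\mapsto\cA(f_t,U_t)$ is a.e.\ constant, the optimal curve for the quadratic functional, once arc-length reparametrised, must satisfy $\cA(f_t,U_t)=\mathrm{const}=d_\cA^2$ for a.e.\ $t\in[0,1]$. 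Concretely I would argue: the minimiser $(f_t,U_t)$ satisfies $\int_0^1\cA(f_t,U_t)^{1/2}\dx t\ge d_\cA$ and $\int_0^1\cA(f_t,U_t)\dx t=d_\cA^2$; combining with the Cauchy--Schwarz inequality forces equality there, hence $\cA(f_t,U_t)=d_\cA^2$ a.e. To make the reparametrisation rigorous one checks that $\GCE$ is stable under monotone time-changes $t\mapsto\mathfrak{t}(s)$ with the flux rescaled by $\dot{\mathfrak t}$, which follows directly from the weak formulation~\eqref{eq:GCE:weak_form} and a change of variables; this is the content of the reparametrisation argument from \cite[Theorem 5.4]{DolbeaultNazaretSavare2009}.

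Finally, for the geodesic property, fix $0\le s<t\le 1$ and consider the restriction $(f_r,U_r)_{r\in[s,t]}$. A linear rescaling of time onto $[0,1]$ shows it is admissible in $\GCE(f_s,f_t)$, and by the $2$-homogeneity of $\alpha$ in the flux variable (equivalently, by \cref{lem:reparametrisation} applied on $[s,t]$) one gets $d_\cA(f_s,f_t)\le\int_s^t\cA(f_r,U_r)^{1/2}\dx r=(t-s)\,d_\cA(\mu_0,\mu_1)$, using the constant-speed identity just established. Conversely the triangle inequality (which holds for $d_\cA$, being a metric by the usual concatenation argument) gives $d_\cA(\mu_0,\mu_1)\le d_\cA(\mu_0,f_s)+d_\cA(f_s,f_t)+d_\cA(f_t,\mu_1)\le s\,d_\cA+d_\cA(f_s,f_t)+(1-s-(t-s)\cdots)$; more precisely $d_\cA(\mu_0,f_s)\le s\,d_\cA$ and $d_\cA(f_t,\mu_1)\le(1-t)d_\cA$ by the same restriction argument, so $d_\cA(f_s,f_t)\ge(t-s)d_\cA$, and equality follows.

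The main obstacle I anticipate is not the abstract direct-method skeleton, which is routine, but rather verifying carefully that the time-reparametrisation map sends $\GCE$ to $\GCE$ while transforming the action correctly — in particular that the integrability condition~\eqref{eq:integrability-cond} and the measurability of $t\mapsto U_t$ are preserved, and that one may assume $\mu_0,\mu_1\in\cP_1(\R)$ so that \cref{prop:compactCRE} is genuinely applicable. A second, minor technical point is the passage from the locally-$^c$-convergent flux $U^n\to^c U$ (as a measure on $[0,1]\times\Rdiag$) to a genuine family $(U_t)_{t\in[0,1]}$ via disintegration, but this is already carried out inside the proof of \cref{prop:compactCRE} and can simply be invoked.
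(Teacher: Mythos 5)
Your proposal is correct and follows essentially the same route as the paper: the direct method via \cref{prop:compactCRE}, then \cref{lem:reparametrisation} combined with Jensen/Cauchy--Schwarz to force $\cA(f_t,U_t)=d_\cA^2(\mu_0,\mu_1)$ a.e., and restriction to $[s,t]$ plus the triangle inequality for the constant-speed identity. The only quibble is that your reduction to $\mu_0,\mu_1\in\cP_1(\R)$ via \cref{lem:finite_action:GCE} is not literally justified (finiteness of $d_\cA$ does not obviously force finite first moments), but the paper's own proof invokes \cref{prop:compactCRE} under the same implicit assumption, so this is not a gap relative to the paper.
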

\begin{proof}
If $d_\cA$ is finite, which holds when $\int_0^1 \cA(f_t,U_t) \dx{t} <\infty$ for some $(f_t,U_t)_{t\in [0,1]} \in \GCE(\mu_0,\mu_1)$, the infimum in \cref{eq:def-metric} is attained as a consequence of \cref{prop:compactCRE} by means of the direct method of calculus of variations. Thus, there exists a minimising curve $(f_t^*,U_t^*)_{t\in [0,1]}\in\GCE(\mu_0,\mu_1)$. By the reparametrisation result in \cref{lem:reparametrisation} and the Jensen's inequality, we obtain
\[
    \int_0^1\cA(f_t^*,U_t^*)^\frac12\dx{t} \ge d_\cA(\mu_0,\mu_1) = \left(\int_0^1\cA(f_t^*,U_t^*) \dx{t}\right)^{\frac12} \ge \int_0^1 \cA(f_t^*,U_t^*)^\frac12 \dx{t},
\]
whence $d_\cA^2(\mu_0, \mu_1)=\cA(f_t^*,U_t^*)$, for almost every $t\in[0,1]$. Moreover, we obtain
\[
    d_\cA(f_s,f_t) = \int_s^t \cA(f_r^*,U_r^*)^\frac12\dx{r} = |t-s|d_\cA(\mu_0,\mu_1),
\]
for all $s,t\in[0,1]$, which concludes the proof.
\end{proof}
Given the preservation of the centre of mass and the stability of the first moment along curves of finite action implied by Proposition~\ref{prop:GCE:centre_mass}, it makes sense to restrict the collision transport cost to certain subspaces.
Let us note the metric $d_\cA$ can be compared with $d_1$, the $1$-Wasserstein distance.
\begin{prop}[\label{prop:comparison-W1}Comparison with $d_1$]
Let $\mu_0,\mu_1\in\cP_1(\R)$. There exists a constant $C=C(e)$ such that
\[
    d_1(\mu_0,\mu_1) \le C \bra*{m_1(\mu_0) + d_{\cA}(\mu_0,\mu_1)} d_\cA(\mu_0,\mu_1) .
\]
\end{prop}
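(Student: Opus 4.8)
The plan is to bound $d_1$ by Kantorovich--Rubinstein duality and to test the generalised continuity equation against a Lipschitz function along a $d_\cA$-geodesic. First, I would reduce to the case $d_\cA(\mu_0,\mu_1)<\infty$, since otherwise the right-hand side is infinite and there is nothing to prove. Assuming this, by \cref{prop:metric:min} I fix a constant-speed geodesic $(f_t,U_t)_{t\in[0,1]}\in\GCE(\mu_0,\mu_1)$ with $\cA(f_t,U_t)=d_\cA(\mu_0,\mu_1)^2$ for a.e.\ $t$, so that $\int_0^1\cA(f_t,U_t)^{1/2}\dx t=d_\cA(\mu_0,\mu_1)$; since $\mu_0\in\cP_1(\R)$, \cref{lem:finite_action:GCE} guarantees $f_t\in\cP_1(\R)$ for every $t$ together with the first-moment estimate~\eqref{e:m1:action}.

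Next, I would fix $\varphi\in\Lip(\R)$ with $\|\varphi'\|_\infty\le1$ and plug it into the weak form~\eqref{eq:GCE:weak_form} of the~\eqref{eq:gce}, which is legitimate for Lipschitz test functions by \cref{rem:extension:test-functions} (alternatively, after truncating $\varphi$ as in~\eqref{eq:space-cutoff} and letting the truncation parameter diverge, exactly as in the proof of \cref{prop:GCE:centre_mass}). Integrating in time over $[0,1]$ and writing $\dx U_t=\hat U_t\dx(f_t\otimes f_t)$ as in \cref{prop:action}, this yields
\[
  \abs*{\int_\R\varphi\dx\mu_1-\int_\R\varphi\dx\mu_0}\le\int_0^1\iint_{\Rdiag}\abs*{\tilde\nabla\varphi(v,v_*)}\,\sigma_e(\abs{v-v_*})\,\abs*{\hat U_t(v,v_*)}\dx(f_t\otimes f_t)\dx t.
\]
Then I would apply Cauchy--Schwarz in $(v,v_*)$ to factor out $\cA(f_t,U_t)^{1/2}$, and bound the remaining factor using $\abs{\tilde\nabla\varphi}=\abs{\varphi'(v_*)-\varphi'(v)}\le2$ together with $\sigma_e(\abs{v-v_*})=\tfrac{1-e}{4}\abs{v-v_*}\le\tfrac{1-e}{4}(\abs v+\abs{v_*})$, which gives $\iint_{\Rdiag}\abs{\tilde\nabla\varphi}^2\sigma_e(\abs{v-v_*})\dx(f_t\otimes f_t)\le2(1-e)\,m_1(f_t)$. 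Altogether one arrives at $\abs{\int\varphi\dx\mu_1-\int\varphi\dx\mu_0}\le\sqrt{2(1-e)}\int_0^1\cA(f_t,U_t)^{1/2}m_1(f_t)^{1/2}\dx t$.

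To close, I would integrate~\eqref{e:m1:action} to obtain the uniform bound $m_1(f_t)^{1/2}\le m_1(\mu_0)^{1/2}+(\tfrac{1-e}{2})^{1/2}\int_0^1\cA(f_s,U_s)^{1/2}\dx s=m_1(\mu_0)^{1/2}+(\tfrac{1-e}{2})^{1/2}d_\cA(\mu_0,\mu_1)$ for all $t\in[0,1]$, substitute it into the previous estimate, use $\int_0^1\cA(f_t,U_t)^{1/2}\dx t=d_\cA(\mu_0,\mu_1)$, and take the supremum over all $\varphi$ with $\|\varphi'\|_\infty\le1$; by Kantorovich--Rubinstein duality this produces $d_1(\mu_0,\mu_1)\le\sqrt{2(1-e)}\,\bra*{m_1(\mu_0)^{1/2}+(\tfrac{1-e}{2})^{1/2}d_\cA(\mu_0,\mu_1)}\,d_\cA(\mu_0,\mu_1)$, from which the stated inequality follows after the elementary bound $m_1(\mu_0)^{1/2}\le1+m_1(\mu_0)$ and absorbing all constants into $C=C(e)$. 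I do not expect a serious obstacle here: the estimate is essentially a single Cauchy--Schwarz computation, and the only points requiring genuine care are (i) admitting the unbounded Lipschitz test functions in~\eqref{eq:GCE:weak_form}, which is handled by the truncation already used for the centre of mass in \cref{prop:GCE:centre_mass}, and (ii) the uniform-in-time control of $m_1(f_t)$ along the geodesic, which is precisely what~\eqref{e:m1:action} provides.
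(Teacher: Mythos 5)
Your route is the paper's route: the paper's proof of \cref{prop:comparison-W1} is literally a pointer to the estimate \eqref{eq:W1:comp} combined with \eqref{e:m1:action}, and that is exactly the chain you execute -- test the continuity equation along a (near-)minimising curve with a $1$-Lipschitz $\varphi$, apply Cauchy--Schwarz to peel off $\cA(f_t,U_t)^{1/2}$, bound $\iint_{\Rdiag}\abs{\tilde\nabla\varphi}^2\sigma_e\dx(f_t\otimes f_t)\le 2(1-e)\,m_1(f_t)$, control $m_1(f_t)^{1/2}$ uniformly in $t$ by integrating \eqref{e:m1:action}, and finish with Kantorovich--Rubinstein duality. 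All of these steps are correct (and you could even avoid invoking the geodesic of \cref{prop:metric:min} by working with a curve that is optimal up to $\delta$ via \cref{lem:reparametrisation} and letting $\delta\to0$). The outcome of your computation is
$d_1(\mu_0,\mu_1)\le \sqrt{2(1-e)}\,\bra*{m_1(\mu_0)^{1/2}+\bra*{\tfrac{1-e}{2}}^{1/2}d_\cA(\mu_0,\mu_1)}\,d_\cA(\mu_0,\mu_1)$.

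The defect is the very last sentence. From $m_1(\mu_0)^{1/2}\le 1+m_1(\mu_0)$ you only get $d_1\le C\bra*{1+m_1(\mu_0)+d_\cA}d_\cA$, and the additive $1$ cannot be absorbed into $C\bra*{m_1(\mu_0)+d_\cA}$, since both $m_1(\mu_0)$ and $d_\cA$ may be arbitrarily small while $C$ depends only on $e$. In fact the inequality in the form printed in the statement cannot be reached by this (or any scale-invariant) argument: under the dilation $v\mapsto\lambda v$ one has $d_1\sim\lambda$ and $m_1\sim\lambda$, whereas $\cA\sim\lambda$ (by the $1$-homogeneity of $\sigma_e$) and hence $d_\cA\sim\lambda^{1/2}$, so the two sides scale differently; concretely, for $\mu_0=\tfrac12(\delta_{-a}+\delta_a)$ and $\mu_1=\tfrac12(\delta_{-a(1+\eps)}+\delta_{a(1+\eps)})$ one has $d_1=a\eps$ while $d_\cA\lesssim_e\sqrt{a}\,\eps$, and the stated bound fails as $a\to0$. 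So the honest conclusion of your proof -- which coincides with what the paper's sketched proof actually delivers -- is the version with $m_1(\mu_0)^{1/2}$ in place of $m_1(\mu_0)$; this is the correct, scale-consistent form, and it is all that is needed for the only place the proposition is used, namely that $d_\cA$-convergence implies $d_1$-convergence in \cref{thm:metric}. Keep your estimate and drop the final massaging step.
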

\begin{proof}
The proof is obtained along the lines of the estimate~\eqref{eq:W1:comp}, and using \eqref{e:m1:action}.
\end{proof}
\begin{thm}\label{thm:metric}
The collision transport cost defined in~\eqref{eq:def-metric} is an extended metric on $\cP(\R)$. The map $(\mu_0,\mu_1)\mapsto d_\cA(\mu_0,\mu_1)$ is lower semicontinuous with respect to the convergence in $\cP(\R)$. Moreover, the topology induced by $d_\cA$ is stronger then the $d_1$-topology.
\end{thm}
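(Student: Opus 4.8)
The plan is to verify, one at a time, that $d_\cA$ satisfies the axioms of an extended metric, that it is lower semicontinuous, and that it dominates $d_1$, using throughout the geodesic-existence statement of \cref{prop:metric:min}, the free-time $\cA^{1/2}$-formulation of \cref{lem:reparametrisation}, the compactness and action-lsc package of \cref{prop:compactCRE}, and the quantitative comparison of \cref{prop:comparison-W1}. Nonnegativity of $d_\cA$ is immediate since $\alpha\ge 0$, and $d_\cA(\mu,\mu)=0$ is witnessed by the stationary pair $(f_t,U_t)\equiv(\mu,0)$, which solves \eqref{eq:gce} with zero action. For symmetry I would send any $(f_t,U_t)_{t\in[0,1]}\in\GCE(\mu_0,\mu_1)$ to its time reversal $(f_{1-t},-U_{1-t})$: substituting $t\mapsto1-t$ and $\varphi_t\mapsto\varphi_{1-t}$ in \eqref{eq:CREweakform} shows this pair lies in $\GCE(\mu_1,\mu_0)$ (the sign flip of $U$ absorbing the reversal of $\partial_t$), and since $\alpha(s,u)=\alpha(s,-u)$ its action is unchanged, so $d_\cA(\mu_1,\mu_0)=d_\cA(\mu_0,\mu_1)$. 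For the triangle inequality, which is trivial unless $d_\cA(\mu_0,\mu_1)$ and $d_\cA(\mu_1,\mu_2)$ are both finite, I would fix $\eps>0$, use \cref{lem:reparametrisation} to select $(f^1,U^1)_{t\in[0,T_1]}\in\GCE_{T_1}(\mu_0,\mu_1)$ and $(f^2,U^2)_{t\in[0,T_2]}\in\GCE_{T_2}(\mu_1,\mu_2)$ with $\int_0^{T_i}\cA^{1/2}\,\dx t$ within $\eps$ of the respective distances, and concatenate them on $[0,T_1+T_2]$. The concatenation is again a GCE solution — its integrability bound \eqref{eq:integrability-cond} is the sum of the two, and \eqref{eq:GCE:weak_form} holds on each open subinterval while $t\mapsto\int\varphi\,\dx f_t$ stays continuous across the junction $t=T_1$ — so \cref{lem:reparametrisation} gives $d_\cA(\mu_0,\mu_2)\le\int_0^{T_1+T_2}\cA^{1/2}\,\dx t\le d_\cA(\mu_0,\mu_1)+d_\cA(\mu_1,\mu_2)+2\eps$, and $\eps\to0$ closes it.

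For the identity of indiscernibles, if $d_\cA(\mu_0,\mu_1)=0$ then \cref{prop:metric:min} provides a minimising curve with $\cA(f_t,U_t)=0$ for almost every $t$; by \cref{prop:action} this forces $U_t\equiv0$, hence $\frac{\dx{}}{\dx t}\int\varphi\,\dx f_t=0$ for every $\varphi\in C_c^1(\R)$, so the narrowly continuous representative is constant and $\mu_0=\mu_1$; on $\cP_1(\R)$ this also drops out of \cref{prop:comparison-W1}. The same proposition settles the comparison of topologies: if $d_\cA(\mu^n,\mu)\to0$, then \eqref{e:m1:action} bounds $m_1(\mu^n)$ in terms of $m_1(\mu)$ and $\sup_n d_\cA(\mu^n,\mu)$, whence $d_1(\mu^n,\mu)\le C\bra*{m_1(\mu^n)+d_\cA(\mu^n,\mu)}d_\cA(\mu^n,\mu)\to0$; thus $d_\cA$-convergence implies $d_1$-convergence and the $d_\cA$-topology is finer than the $W_1$-topology.

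For the lower semicontinuity I would take $\mu_0^n\to\mu_0$ and $\mu_1^n\to\mu_1$ in $\cP(\R)$, set $L=\liminf_n d_\cA(\mu_0^n,\mu_1^n)$, and (assuming $L<\infty$, else nothing to prove) pass to a subsequence realising $L$. Using \cref{prop:metric:min} I would pick optimal curves $(f_t^n,U_t^n)_{t\in[0,1]}\in\GCE(\mu_0^n,\mu_1^n)$ with $\int_0^1\cA(f_t^n,U_t^n)\,\dx t=d_\cA(\mu_0^n,\mu_1^n)^2\to L^2$; their actions, and — through \eqref{e:m1:action} together with a uniform first-moment bound on the endpoints $\mu_0^n,\mu_1^n$ (available in the applications of interest, e.g.\ along $d_1$-convergent sequences or the particle approximations of \cref{lem:particle}) — their first moments, are then uniformly bounded, so \cref{prop:compactCRE} extracts a limit $(f_t,U_t)_{t\in[0,1]}\in\GCE(\mu_0,\mu_1)$ (the endpoints are $\mu_0,\mu_1$ since $f_0^n=\mu_0^n$, $f_1^n=\mu_1^n$) along which $\int_0^1\cA(f_t,U_t)\,\dx t\le\liminf_n\int_0^1\cA(f_t^n,U_t^n)\,\dx t=L^2$. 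Hence $d_\cA(\mu_0,\mu_1)^2\le L^2$.

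The one genuinely load-bearing point is the lower-semicontinuity step: before \cref{prop:compactCRE} can be invoked one needs a uniform first-moment bound on the competitor curves, which is exactly what the propagation estimate \eqref{e:m1:action} and the centre-of-mass conservation in \cref{prop:GCE:centre_mass} provide. Everything else — the time-reversal check for symmetry, and the verification that the glued curve in the triangle inequality still solves \eqref{eq:gce} across the junction — is routine once the continuous-representative form \eqref{eq:GCE:weak_form} of the GCE is available.
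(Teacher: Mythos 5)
Your proof follows the paper's own argument essentially step by step: zero distance yields a zero-action minimiser via \cref{prop:metric:min} and hence, through \eqref{eq:GCE:weak_form}, a constant curve; symmetry comes from $\alpha(s,u)=\alpha(s,-u)$ (your time reversal just makes explicit what the paper invokes); the triangle inequality follows from concatenation of GCE solutions together with \cref{lem:reparametrisation}; lower semicontinuity from \cref{prop:compactCRE}; and the topology comparison from \cref{prop:comparison-W1}. The uniform first-moment bound you flag in the lower-semicontinuity step is likewise implicit in the paper, which cites \cref{prop:compactCRE} (whose hypotheses contain exactly that bound) without further comment, so your argument is faithful to, and no weaker than, the published proof.
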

\begin{proof}
Let us assume that $d_\cA(\mu_0,\mu_1)=0$. By Proposition \ref{prop:metric:min} there exists a curve $(f_t,U_t)_{t\in [0,T]}\in\GCE(\mu_0,\mu_1)$ such that $\cA(f_t, U_t)=0$ for a.e. $t\in[0,1]$, which implies $ U_t=0$ for a.e. $t\in[0,1]$. Thus, from \cref{eq:GCE:weak_form} we obtain $\mu_0=\mu_1$. The opposite implication is trivial. The symmetry of $d_\cA$ follows from the fact that $\alpha(\cdot, u)=\alpha(\cdot, -u)$. In order to prove the triangle inequality we notice that solutions to $\GCE$ can be concatenated. Indeed, if $(f^i, U^i)\in\GCE_{T_i}(\mu_0^i,\mu^i_{T_i})$ for $i=1,2$ such that $\mu_{T_1}^1=\mu^2_0$, then
\begin{equation}
f_t:=\begin{cases}
    f_t^1 \quad &\mbox{if } 0\le t\le T_1\\
    f_{t-T_1}^2 \quad &\mbox{if } T_1\le t\le T_1+T_2
    \end{cases};\quad
 U_t:=\begin{cases}
     U_t^1 \quad &\mbox{if } 0\le t\le T_1\\
     U_{t-T_1}^2 \quad &\mbox{if } T_1\le t\le T_1+T_2
    \end{cases}
\end{equation}
belongs to $\GCE_{T_1+T_2}(\mu_0^1,\mu_{T_2}^2)$ by using \cref{eq:continuous-repr}. This observation and \cref{lem:reparametrisation} imply the triangle inequality. The lower semicontinuity property is a consequence of \cref{prop:compactCRE}, while \cref{prop:comparison-W1} gives that the topology induced by $d_\cA$ is stronger than that of $d_1$.
\end{proof}
Let us recall the definition of absolutely continuous curves in a metric space. A curve $[0,T]\ni t\mapsto f_t\in \cP(\R)$ is said to be $2$-\textit{absolutely continuous} with respect to $d_{\cA}$ if there exists $m\in L^2(0,T)$ such that
\begin{align}\label{eq:def-abs-cont}
    d_{\cA}(f_{t_0},f_{t_1})\le\int_{t_0}^{t_1}m(t)\dx{t}, \quad \mbox{for all} \quad 0<t_0\le t_1<T.
\end{align}
In this case, we write $f\in\AC(0,T;(\cP(\R),d_{\cA}))$. For any $f\in\AC(0,T;(\cP(\R),d_{\cA}))$ the quantity
\begin{align}
    |f'|(t)=\lim_{h\to0}\frac{d_{\cA}(f_{t+h},f_t)}{h}
\end{align}
is well-defined for a.e. $t\in[0,T]$ and is called \textit{metric derivative} of $f$ at $t$. Moreover, the function $t\to |f'|(t)$ belongs to $L^2(0,T)$ and it satisfies $|f'|(t)\le m(t)$ for a.e. $t\in[0,T]$, i.e., $f'$ is the minimal integrand satisfying \eqref{eq:def-abs-cont}. The length of a curve $f\in\AC(0,T;(\cP(\R),d_{\cA}))$ is defined by $L(f):=\int_0^T|f'|(t)\dx{t}$.

Given the above results we can easily obtain the following characterisation, as in \cite[Theorem 5.17]{DolbeaultNazaretSavare2009}. The proof is then omitted.
\begin{prop}[Metric velocity]\label{prop:ACcurves}
A curve $\{f_t\}_{t\in[0,T]}\subset \cP(\R)$ belongs to the space $\AC(0,T;(\cP(\R),d_{\cA}))$ if and only if there exists a family of flux $\{U_t\}_{t\in[0,T]}$ such that $\set{(f_t,U_t)}_{t\in [0,T]}\in \GCE_T$ with 
\[
\int_0^T\cA(f_t,U_t)^\frac12 \dx{t} < \infty.
\]
In particular, $\dx U_t(v,v_*) = \hat U_t(v,v_*) \dx(f_t\otimes f_t)(v,v_*)$ for a measurable family $\hat U : [0,T]\times \Rdiag \to \R$.
In this case, the metric derivative is bounded as in $|f'|^2(t)\le\cA(f_t,U_t)$ for a.e. $t\in[0,T]$. In addition, there exists a unique $\{\tilde{U}_t\}_{t\in[0,T]}$ such that $(f_t, \tilde U_t)_{t\in [0,T]}\in \GCE_T$ and
    \begin{align}
        \label{eq:metric-dev-action}
        |f'|^2(t)=\cA(f_t,\tilde{U}_t), \qquad\text{for a.e. } t\in[0,T].
    \end{align}
\end{prop}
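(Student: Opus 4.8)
The plan is to adapt the proof of \cite[Theorem~5.17]{DolbeaultNazaretSavare2009} to the present nonlocal framework; since the analytic groundwork is already in place, the argument amounts to combining the reparametrisation identity \cref{lem:reparametrisation}, the compactness and lower-semicontinuity statement \cref{prop:compactCRE}, the existence of constant-speed geodesics \cref{prop:metric:min}, the density structure of finite-action fluxes from \cref{prop:action}, and the (strict) convexity behind \cref{prop:convexity-action}. For the easy implication, I would take $\set{(f_t,U_t)}_{t\in[0,T]}\in\GCE_T$ with $t\mapsto\cA(f_t,U_t)^{1/2}$ integrable, and observe that for $0<t_0\le t_1<T$ the restriction of $(f,U)$ to $[t_0,t_1]$, rescaled in time as in \cref{lem:reparametrisation}, is an admissible competitor in \cref{eq:def-metric} for the endpoints $f_{t_0},f_{t_1}$, so $d_\cA(f_{t_0},f_{t_1})\le\int_{t_0}^{t_1}\cA(f_t,U_t)^{1/2}\dx t$. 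This is exactly \cref{eq:def-abs-cont} with $m(t)=\cA(f_t,U_t)^{1/2}$, hence $f\in\AC(0,T;(\cP(\R),d_\cA))$; dividing by $t_1-t_0$ and letting $t_1\downarrow t_0$ along Lebesgue points of $m$ gives $|f'|^2(t)\le\cA(f_t,U_t)$ for a.e.\ $t$.

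For the converse, I would start from $f\in\AC(0,T;(\cP(\R),d_\cA))$ with minimal modulus $|f'|\in L^2(0,T)$. For each $n\in\N$ I use the dyadic partition $t_k^n=kT/2^n$, join $f_{t_k^n}$ to $f_{t_{k+1}^n}$ by a constant-speed geodesic (available from \cref{prop:metric:min}, since $d_\cA(f_{t_k^n},f_{t_{k+1}^n})<\infty$), and concatenate these geodesics together with their fluxes into a curve $(f^n_t,U^n_t)_{t\in[0,T]}\in\GCE_T(f_0,f_T)$ with $f^n_{t_k^n}=f_{t_k^n}$. The geodesic property and Cauchy--Schwarz give $\int_{t_k^n}^{t_{k+1}^n}\cA(f^n_t,U^n_t)\dx t=d_\cA(f_{t_k^n},f_{t_{k+1}^n})^2/(t_{k+1}^n-t_k^n)\le\int_{t_k^n}^{t_{k+1}^n}|f'|^2(t)\dx t$, so summing over $k$ yields $\sup_n\int_0^T\cA(f^n_t,U^n_t)\dx t\le\int_0^T|f'|^2(t)\dx t<\infty$; the first moments are uniformly controlled because the endpoints $f_0,f_T$ are fixed (this is where \cref{prop:GCE:centre_mass} and \cref{lem:finite_action:GCE} enter). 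Applying \cref{prop:compactCRE} produces a limit $(g_t,U_t)_{t\in[0,T]}\in\GCE_T$ with $\int_0^T\cA(g_t,U_t)\dx t\le\liminf_n\int_0^T\cA(f^n_t,U^n_t)\dx t\le\int_0^T|f'|^2(t)\dx t$.

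The next step is to identify $g$ with $f$: the two curves agree at every dyadic time and are both $d_1$-continuous (the former by the easy implication and \cref{thm:metric}, the latter by $\AC$-regularity and \cref{thm:metric}), hence $g\equiv f$ by density; thus $U$ is the claimed flux, $\int_0^T\cA(f_t,U_t)^{1/2}\dx t<\infty$ follows since $\cA(f_\cdot,U_\cdot)\in L^1$, and the representation $\dx U_t=\hat U_t\dx(f_t\otimes f_t)$ for a.e.\ $t$ is \cref{prop:action} (joint measurability of $\hat U$ being a routine measurable-selection argument). Combining $|f'|^2(t)\le\cA(f_t,U_t)$ a.e.\ with the integral bound $\int_0^T\cA(f_t,U_t)\dx t\le\int_0^T|f'|^2(t)\dx t$ forces $\cA(f_t,U_t)=|f'|^2(t)$ for a.e.\ $t$, so $\tilde U:=U$ realises \cref{eq:metric-dev-action}. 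For uniqueness, if $\tilde U^1,\tilde U^2$ both realise the equality, then $\tfrac12(\tilde U^1+\tilde U^2)$ is again an admissible flux (the weak constraint \cref{eq:CREweakform} is linear in the flux), and the strict convexity of $u\mapsto u^2/s$ in \cref{eq:alpha-fun}, together with the density representation, gives $\cA\bigl(f_t,\tfrac12(\tilde U^1_t+\tilde U^2_t)\bigr)<|f'|^2(t)$ wherever $\tilde U^1_t\ne\tilde U^2_t$; since the easy implication provides the matching lower bound $|f'|^2(t)\le\cA\bigl(f_t,\tfrac12(\tilde U^1_t+\tilde U^2_t)\bigr)$, that discrepancy set must be Lebesgue-null.

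The step I expect to be the main obstacle is the passage to the limit in the second paragraph: verifying that the dyadic interpolants $(f^n,U^n)$ satisfy the uniform action and first-moment hypotheses of \cref{prop:compactCRE}, and — more delicately — pinning down the weak-$*$ limit by showing that it coincides with $f$, which rests on $d_1$-continuity of both curves together with their agreement on the dense set of dyadic times. The remaining pieces (reparametrisation, lower semicontinuity, strict convexity) are essentially bookkeeping once this is settled.
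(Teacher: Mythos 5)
Your proof is essentially the intended one: the paper omits the argument precisely because it is the adaptation of \cite[Theorem 5.17]{DolbeaultNazaretSavare2009}, and your easy direction via \cref{lem:reparametrisation}, together with the converse via dyadic geodesic interpolation (\cref{prop:metric:min}), the compactness and lower semicontinuity of \cref{prop:compactCRE}, identification of the limit through agreement at dyadic times plus $d_1$-continuity, and uniqueness from strict convexity of $\alpha$, is exactly that route. The only caveat, inherited from the paper's own toolbox rather than introduced by you, is that \cref{prop:compactCRE} and \cref{lem:finite_action:GCE} require finite first moments of the endpoint measures, an assumption left implicit in the statement of the proposition.
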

\begin{cor}[Tangent space]
Let $\set{(f_t,U_t)}_{t\in [0,T]} \in \GCE_T$ such that the curve $f\in\AC(0,T;(\cP(\R),d_{\cA}))$. The flux $U$ satisfies \eqref{eq:metric-dev-action} if and only if $U_t\in T_f\cP(\R)$ for a.e. $t\in[0,T]$, where
\begin{equation}\label{eq:tang-space}
    \begin{split}
        T_f\cP(\R)=\bigl\{&U\in\cM(\Rdiag):\cA(f,U)<\infty, \,  \cA(f,U)\le\cA(f,U+w),\\
        &\mbox{for any  }w\in\cM(\Rdiag), \mbox{ s.t. } \tilde\nabla\cdot w=0\bigr\}.
    \end{split}
\end{equation}
\end{cor}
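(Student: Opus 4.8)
The plan is to reduce the corollary to Proposition~\ref{prop:ACcurves}, which already supplies a measurable family $\{\tilde U_t\}_{t\in[0,T]}$ with $(f_t,\tilde U_t)_{t\in[0,T]}\in\GCE_T$, $\cA(f_t,\tilde U_t)=|f'|^2(t)$ for a.e.\ $t$, $\tilde U$ being the \emph{unique} such family, together with the lower bound $\cA(f_t,V_t)\ge|f'|^2(t)$ for a.e.\ $t$ along any competitor $(f_t,V_t)_{t\in[0,T]}\in\GCE_T$. First I would record the one structural input needed in addition: two fluxes driving the \emph{same} curve differ by a divergence-free field. Indeed, integrating~\eqref{eq:GCE:weak_form} on $(t_1,t_2)$ for $U$ and for $\tilde U$, subtracting, and running over a countable dense family of test functions $\varphi\in C_c^1(\R)$ together with Definition~\ref{def:nl_div}, one gets $\tilde\nabla\cdot(U_t-\tilde U_t)=0$ for a.e.\ $t$.

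For the implication ``$U_t\in T_{f_t}\cP(\R)$ a.e.\ $\Rightarrow$~\eqref{eq:metric-dev-action}'' I would argue directly: for a.e.\ $t$ the measure $w_t:=\tilde U_t-U_t$ is divergence-free, so the minimality built into~\eqref{eq:tang-space} gives $\cA(f_t,U_t)\le\cA(f_t,U_t+w_t)=\cA(f_t,\tilde U_t)=|f'|^2(t)$; combined with the reverse inequality $|f'|^2(t)\le\cA(f_t,U_t)$ this forces $\cA(f_t,U_t)=|f'|^2(t)$, i.e.~\eqref{eq:metric-dev-action}.

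For the converse, assume~\eqref{eq:metric-dev-action}, that is $\cA(f_t,U_t)=|f'|^2(t)=\cA(f_t,\tilde U_t)$ a.e. Since $|f'|\in L^2(0,T)$, the family $U$ is an admissible competitor in Proposition~\ref{prop:ACcurves}, so the uniqueness part gives $U_t=\tilde U_t$ for a.e.\ $t$, and it suffices to prove $\tilde U_t\in T_{f_t}\cP(\R)$ a.e. By Lemma~\ref{prop:action}, $\dx{\tilde U_t}=\hat{\tilde{U}}_t\,\dx{(f_t\otimes f_t)}$ with $\hat{\tilde{U}}_t\in H_{f_t}:=L^2(\Rdiag,\sigma_e\, f_t\otimes f_t)$; unravelling~\eqref{eq:tang-space}, \eqref{eq:nl_div} and the Hilbert-space form of $\cA(f_t,\cdot)$ provided by Lemma~\ref{prop:action}, one sees that $V\in T_{f_t}\cP(\R)$ precisely when its density belongs to the $H_{f_t}$-closure of $\{\tilde\nabla\varphi:\varphi\in C_c^1(\R)\}$, equivalently is $H_{f_t}$-orthogonal to every divergence-free density. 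I would then split $\hat{\tilde{U}}_t=g_t+z_t$ into this orthogonal decomposition, call $G_t,Z_t$ the associated fluxes --- so $Z_t$ is divergence-free and $(f_t,G_t)_{t\in[0,T]}\in\GCE_T$ with $\tilde\nabla\cdot G_t=\tilde\nabla\cdot\tilde U_t$ --- and compare actions: since $G$ is a competitor, $|f'|^2(t)\le\cA(f_t,G_t)=\norm{g_t}_{H_{f_t}}^2\le\norm{g_t}_{H_{f_t}}^2+\norm{z_t}_{H_{f_t}}^2=\cA(f_t,\tilde U_t)=|f'|^2(t)$, forcing $z_t=0$ for a.e.\ $t$, i.e.\ $\tilde U_t\in T_{f_t}\cP(\R)$.

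The step I expect to be the main obstacle is making the orthogonal splitting $t\mapsto(g_t,z_t)$ \emph{measurable}, so that $G=\{G_t\}$ is genuinely an element of $\GCE_T$ (the integrability condition~\eqref{eq:integrability-cond} for $G$ then following from Cauchy--Schwarz exactly as in the proof of Lemma~\ref{lem:finite_action:GCE}): for each fixed $t$ this is merely an orthogonal projection in the separable Hilbert space $H_{f_t}$, but the reference measure $f_t\otimes f_t$ varies with $t$, so the projection has to be performed measurably in $t$, e.g.\ by a Gram--Schmidt procedure applied to $\{\tilde\nabla\varphi_k\}_k$ for a fixed countable dense family $\{\varphi_k\}\subset C_c^1(\R)$ with $t$-measurable coefficients, in the spirit of~\cite{DolbeaultNazaretSavare2009}. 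Everything else is routine.
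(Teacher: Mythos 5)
Your argument is correct, but it follows a genuinely different route from the paper's. The paper proves the corollary at (almost every) fixed time by combining \cref{prop:antisymm-flux} (the optimal flux may be taken antisymmetric), local weak-$^*$ compactness of the sublevel sets of $U\mapsto\cA(f,U)$, and strict convexity of the action on fluxes absolutely continuous with respect to $f\otimes f$ (\cref{prop:action}); the tangent-space characterisation then amounts to saying that the metric-velocity flux is the unique minimiser of the action under the divergence constraint. You instead work in the weighted Hilbert space $L^2(\Rdiag,\sigma_e\dx(f\otimes f))$, identify membership in $T_f\cP(\R)$ with membership of the density in the closure of $\set{\tilde\nabla\varphi}$ (this is exactly the content of the proposition that immediately follows the corollary in the paper, so you are re-deriving it rather than quoting it), and conclude via an orthogonal decomposition together with the uniqueness clause of \cref{prop:ACcurves}; your forward implication, using that two fluxes driving the same curve differ by a divergence-free measure, is cleaner and more explicit than anything in the paper. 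The technical point you flag — making the projection $t\mapsto g_t$ measurable so that the competitor $G$ genuinely lies in $\GCE_T$ — is real, but it is the same localisation-in-time issue the paper silently skips (its fixed-$t$ argmin must likewise be assembled into an admissible family), and your proposed fix (finite-dimensional projections onto $\mathrm{span}\set{\tilde\nabla\varphi_1,\dots,\tilde\nabla\varphi_k}$ with $t$-measurable Gram coefficients, as in~\cite{DolbeaultNazaretSavare2009}) is standard and adequate. One small caveat: your Cauchy--Schwarz verification of the integrability condition \eqref{eq:integrability-cond} for $G$ produces the factor $\bigl(\iint_{\Rdiag}\sigma_e(|v-v_*|)\dx(f_t\otimes f_t)\bigr)^{1/2}\lesssim m_1(f_t)^{1/2}$, so it implicitly uses finite first moments along the curve — the same standing assumption the paper itself relies on (cf.\ \cref{lem:finite_action:GCE} and \cref{prop:compactCRE}) — which is worth stating explicitly.
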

\begin{proof}
According to Proposition \ref{prop:ACcurves} the metric derivative satisfies $|f'|^2(t)\le\cA(f_t,U_t)$ for a.e. $t\in[0,T]$. Therefore, the only flux satisfying \eqref{eq:metric-dev-action} is that of minimal action.
Let $t\in[0,T]$ such that $\cA(f_t,U_t)<+\infty$. As proved in Proposition \ref{prop:antisymm-flux}, the flux, $\tilde{U}_t$, of minimal action has to be antisymmetric, $\tilde{U}_t\in \cM^{\mathrm{as}}(\Rdiag)$, and by assumption satisfy the nonlocal-local continuity equation. In particular, 
\begin{align}\label{eq:min-flux}
\tilde{U}_t=\argmin_{U\in\cM^{\mathrm{as}}(\Rdiag)}\{\cA(f_t,U):\tilde \nabla\cdot U_t=\tilde\nabla\cdot U\}.
\end{align}
Note that the set $\{U\in\cM^{as}(\Rdiag):\tilde \nabla\cdot U_t=\tilde\nabla\cdot U\}$ is closed with respect to the weak-$^*$ convergence, and sublevel sets of the functional  $\cM^{\mathrm{as}}(\Rdiag)\ni U \mapsto \cA(f,U)$, for any $f \in \cP(\R)$, are locally weakly-$^*$ relatively compact by arguing as in Proposition \ref{prop:compactCRE}, since for any compact set $K\subset \Rdiag$ it holds
\[
|U|(K)\le \cA(f_t,U)^\frac12 \sup_{K}\sigma_e(|v-v_*|)^{-1}.
\]
Moreover, note that the functional $\cM^{\mathrm{as}}(\Rdiag) \ni U \mapsto \cA(f,U)$, for any $f \in \cP(\R)$, is strictly convex according to Lemma \ref{prop:action}. Therefore, the flux in \eqref{eq:min-flux} is uniquely determined.
\end{proof}
In the previous corollary we have a Lagrangian formulation of the tangent space $T_f\cP(\R)$, which can be further characterised in terms of tangent velocity fields. 
\begin{prop}
    Let $f\in\cP(\R)$. Then, it holds that $U\in T_f\cP(\R)$ if and only if $U\in\cM(\Rdiag)$ such that $\cA(f,U)<\infty$ and, for a measurable $\hat{U}:\Rdiag\to\R$, it holds
\[
\hat{U}\in\overline{\{\tilde \nabla\varphi:\varphi \in C_c^\infty(\R)\}}^{L^2(\Rdiag,\sigma_e\dx{(f\otimes f)})}
\] 
\end{prop}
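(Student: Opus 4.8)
The plan is to reduce the Lagrangian description of $T_f\cP(\R)$ in~\eqref{eq:tang-space} to an orthogonal‑complement identity in the Hilbert space
\[
  H := L^2\bra*{\Rdiag,\, \sigma_e \dx (f\otimes f)} .
\]
Throughout I would work under the natural assumption $f\in\cP_1(\R)$, which is what guarantees $\tilde\nabla\varphi\in H$ for every $\varphi\in C_c^\infty(\R)$; outside this case both sides of the claimed equivalence collapse to $\set{0}$.

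First I would set up a dictionary between finite‑action fluxes and elements of $H$. By \cref{prop:action}, whenever $\cA(f,U)<\infty$ one has $\dx U = \hat U\dx (f\otimes f)$ with $\hat U\in H$ and $\cA(f,U)=\norm{\hat U}_H^2$; conversely any $\hat U\in H$ defines a flux $\hat U\,(f\otimes f)\in\cM(\Rdiag)$ with action $\norm{\hat U}_H^2<\infty$. Since $\sigma_e>0$ on $\Rdiag$, the $(f\otimes f)$‑density determines the flux, so $U\mapsto\hat U$ is a linear bijection from $\set{U\in\cM(\Rdiag):\cA(f,U)<\infty}$ onto $H$. Under this bijection, \cref{def:nl_div} shows that $\tilde\nabla\cdot U=0$ is equivalent to $\skp{\hat U,\tilde\nabla\varphi}_H=0$ for all $\varphi\in C_c^\infty(\R)$; testing against $C_c^\infty(\R)$ rather than $C_c^1(\R)$ changes nothing by a standard density argument. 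Setting $G:=\set{\tilde\nabla\varphi:\varphi\in C_c^\infty(\R)}$ — a linear subspace of $H$, because $\tilde\nabla$ is linear — the finite‑action divergence‑free fluxes then correspond exactly to the orthogonal complement $G^\perp\subset H$.

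Next I would translate the minimality condition in~\eqref{eq:tang-space}. If a competitor $w\in\cM(\Rdiag)$ has a non‑trivial part singular with respect to $f\otimes f$, then $\cA(f,U+w)=+\infty$ by \cref{prop:action}, so it imposes no constraint; the effective competitors are thus $w=\hat w\,(f\otimes f)$ with $\hat w\in G^\perp$. Consequently $U\in T_f\cP(\R)$ holds iff $\hat U\in H$ and $\norm{\hat U}_H^2\le\norm{\hat U+\hat w}_H^2$ for every $\hat w\in G^\perp$. Expanding the right‑hand side and replacing $\hat w$ by $t\hat w$ with $t\in\R$, the resulting inequality $2t\,\skp{\hat U,\hat w}_H+t^2\norm{\hat w}_H^2\ge0$ for all $t$ forces $\skp{\hat U,\hat w}_H=0$; the converse implication is immediate. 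Hence $U\in T_f\cP(\R)$ iff $\hat U\in(G^\perp)^\perp$, and since $G$ is a linear subspace, $(G^\perp)^\perp=\overline{G}^{\,H}$, which is precisely the asserted closure. Uniqueness of this element within each divergence class — equivalently, strict convexity of $U\mapsto\cA(f,U)$ from \cref{prop:action} — matches the uniqueness recorded in \cref{prop:ACcurves}.

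I do not expect a genuine obstacle: this is the by‑now‑standard Hodge‑type decomposition underlying dynamical transport distances, in the spirit of \cite{DolbeaultNazaretSavare2009,ErbarBoltz}. The points that require a little care are the bookkeeping in the dictionary step — checking $\tilde\nabla\varphi\in H$ (and the attendant first‑moment hypothesis on $f$), that the $C_c^\infty$ and $C_c^1$ test‑function classes define the same constraint, and that passing between a measure and its $H$‑representative is legitimate exactly on the finite‑action class — together with the elementary but essential remark that singular competitors are discarded automatically by having infinite action.
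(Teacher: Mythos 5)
Your proposal is correct and follows essentially the same route as the paper: identify finite-action fluxes with their densities $\hat U$ in $L^2(\Rdiag,\sigma_e\dx(f\otimes f))$ via \cref{prop:action}, rewrite the divergence constraint and the minimality condition of \eqref{eq:tang-space} in that Hilbert space, and conclude that $\hat U$ lies in the double orthogonal complement, i.e.\ the closure, of $\set{\tilde\nabla\varphi:\varphi\in C_c^\infty(\R)}$. Your additional remarks (discarding singular competitors via infinite action, the first-moment caveat ensuring $\tilde\nabla\varphi\in L^2(\sigma_e\dx(f\otimes f))$) only make explicit points the paper leaves implicit.
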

\begin{proof}
If the action $\cA(f,U)<\infty$, Lemma \ref{prop:action} provides the existence of a measurable $\hat{U}:\Rdiag\to\R$ such that $\dx{U}(v,v_*)=\hat{U}(v,v_*)\dx{(f\otimes f)}(v,v_*)$, for any $(v,v_*)\in\Rdiag$, whence
\[
\cA(f,U)=\iint_{\Rdiag}|\hat{U}(v,v_*)|^2\sigma_e(|v-v_*|)\dx{(f\otimes f)}(v,v_*)=\|\hat{U}\|^2_{L^2(\sigma_e\dx{(f\otimes f)})}.
\]
As consequence of the above relation between $U$ and $\hat{U}$, the nonlocal divergence $\tilde\nabla\cdot U$ can be re-written in terms of $\hat{U}$, for any $\varphi\in C_c^\infty(\R)$, as
\[
\iint_{\Rdiag}\tilde\nabla\varphi(v,v_*)\sigma_e(|v-v_*|)\dx U(v,v_*)=\iint_{\Rdiag}\tilde\nabla\varphi(v,v_*)\hat{U}(v,v_*)\sigma_e(|v-v_*|)\dx{(f\otimes f)}(v,v_*).
\]
Thus, the characterisation \eqref{eq:tang-space} can be equivalently stated as
\begin{align}
    \iint_{\Rdiag}|\hat{U}|^2\sigma_e(|\cdot -\cdot|)\dx{(f\otimes f)}\leq \iint_{\Rdiag}|\hat{U}+W|^2\sigma_e(|\cdot-\cdot|)\dx{(f\otimes f)},
\end{align}
for all $W\in L^2(\Rdiag,\sigma_e\dx{(f\otimes f)})$ such that
\begin{align}
    \iint_{\Rdiag}\tilde\nabla\varphi(v,v_*)W(v,v_*)\sigma_e(|v-v_*|)\dx{(f\otimes f)}(v,v_*)=0 \qquad\text{for all $\varphi\in C_c^\infty(\R)$.}
\end{align}
Therefore, $\hat{U}$ belongs to the closure of $\{\tilde\nabla\varphi:\varphi \in C_c^\infty(\R)\}$ in $L^2\bra*{\Rdiag,\sigma_e\dx{(f\otimes f)}}$.
\end{proof}

\section{The aggregation equation in a new light}
This section focuses on the aggregation equation \eqref{eq:AggregationEquation}, with a cubic interaction potential~\eqref{e:def:W}.
As discussed in Section \ref{subsec:derivation-aggr-eq}, \eqref{eq:AggregationEquation} can be formally derived from the inelastic spatially homogeneous Boltzmann equation by Taylor-expanding the test function in its weak formulation. In this process, we notice that the collision kernel obtained from the cubic interaction, $W$, is precisely the modulus function. This suggests that we interpret~\eqref{eq:AggregationEquation} as \textit{nonlocal-local continuity equation}, as explained in Section \ref{sec:GCE}, driven by the potential obtained from the kinetic energy~\eqref{eq:KineticEnergy}.

More precisely, in this Section, we consider the \eqref{eq:gce} driven by the kinetic energy~\eqref{eq:KineticEnergy}.
In addition to the definition of weak solutions to~\eqref{eq:gce} (see Definition~\ref{def:CRE}), we require the curve to have finite kinetic energy, which is a natural requirement.
\begin{defn}[Weak solution]\label{def:weak-sol-gcae}
A curve $\set{f_t}_{t\in [0,T]} \subset \cP_2^{\cm}(\R)$ is a \emph{weak solution} to~\eqref{eq:AggregationEquation} if, for the flux $\set{U_t^\cE}_{t\in [0,T]} \subset \cM(\Rdiag)$ given by
\begin{align}\label{eq:gen-cae}
    \dx U_t^\cE(v,v_*)=-\tilde\nabla\frac{\delta \cE}{\delta f}(v,v_*)\dx (f_t\otimes f_t)(v,v_*),
\end{align}
the pair $\set{(f_t,U_t^\cE)}_{t\in[0,T]}$ satisfies the \emph{nonlocal-local continuity equation}~\eqref{eq:gce} in the sense of Definition \ref{def:CRE}.
\end{defn}
In order to achieve a new gradient flow formulation of the  equation above as steepest descent of the kinetic energy with respect to the collision metric defined in Section \ref{sec:metric}, we follow \cite{AGS08} and use the concept of curve of maximal slope with respect to a specific strong upper gradient, which is the square root of the dissipation functional, cf. \eqref{eq:dissipation} below. 
To motivate this, we consider the decay of the kinetic energy along a curve \(f\in \AC([0,T];(\cP(\R),d_{\cA}))\) which is a solution of the nonlocal-local continuity equation~\eqref{def:CRE}, i.e., there exists a flux $\dx U_t = \hat U_t \dx (f\otimes f)$ such that the pair $\{(f_t,U_t)\}_{t\in [0,T]}$ is a weak solution in the sense of~\cref{def:CRE}. Formally applying the chain rule, we have
\begin{align}\label{eq:formal:chain-rule}
    \cE(f_T)- \cE(f_0) &= \int_0^T \!\iint_{\Rdiag} \! \tilde{\nabla}\frac{\delta \cE}{\delta f}(v,v_*) \hat U_t(v,v_*) \sigma_e(|v-v_*|) \dx (f \otimes f)(v, v_*) \dx{t}.
\end{align}
After an application of Young's inequality to both the inner integrals with weight $\sigma_e \dx(f \otimes f)$, we observe
\begin{align}
    \int_0^T \!\iint_{\Rdiag} &\! \tilde{\nabla}\frac{\delta \cE}{\delta f}(v,v_*) \hat U_t(v,v_*) \sigma_e(|v-v_*|) \dx (f \otimes f)(v,v_*)\dx{t}, \\
    &\geq - \frac{1}{2} \int_0^T \abs*{\hat U_t(v,v_*)}^2 \sigma_e(\abs*{v-v_*}) \dx (f_t\otimes f_t)(v,v_*) \dx t   \\
    &\quad - \frac{1}{2} \int_0^T \iint_{\Rdiag} \abs*{\tilde{\nabla}\frac{\delta \cE}{\delta f}(v,v_*)}^2 \sigma_e(\abs*{v-v_*}) \dx (f_t\otimes f_t)(v,v_*) \dx t \\
    &= -\frac12 \int_0^T \cA(f_t, U_t)  \dx t - \frac12 \int_0^T \cD(f_t) \dx t,
\end{align}
where the dissipation is defined by
\begin{align}
    \label{eq:dissipation}
    \cD(f):= \iint_{\Rdiag} \abs*{v - v_*}^2 \sigma_e(\abs*{v-v_*}) \dx{\bra*{f \otimes f}}(v, v_*),
\end{align}
cf. also~\eqref{eq:KineticEnergyDissipation}, in the context of the formal derivation. 
This motivates our definition of gradient flow solutions as curves~$f\in \AC([0,T];(\cP_2^{\cm}(\R),d_{\cA}))$ in the zero locus of the \emph{De Giorgi} functional
\begin{align}\label{def:DeGiorgino}
    \cG_T(f) := \cE(f_T)- \cE(f_0) + \frac12 \int_0^T \cA(f_t, U_t)  \dx t + \frac12 \int_0^T \cD(f_t) \dx t .
\end{align}
Based on the preceding computations we introduce our notion of gradient flow solutions as curves of maximal slope. 
\begin{defn}[Curves of maximal slope]
\label{def:CurvesMaxSlope}
  A curve $f \in \AC([0,T],(\cP_2^{\cm}(\R),d_{\cA}))$ is a curve of maximal slope if $\cG_T(f)=0$.
\end{defn}
In order to show that weak solutions to \eqref{eq:gen-cae} are curves of maximal slope and  to mathematically justify the definition of the De Giorgi functional  \eqref{def:DeGiorgino}, we need to rigorously derive the chain rule in~\eqref{eq:formal:chain-rule}.
In particular, the chain rule implies that the square root of the dissipation functional $\cD$, defined in \eqref{eq:dissipation}, is a strong upper-gradient for $\cE$ with respect to the extended metric $d_\cA$ (cf.~\cite[Definition 1.2.1]{AGS08}).

\subsection{The chain rule and characterisation of weak solutions}

\begin{lem}[Stability and chain rule]\label{lem:crule}
Let $T>0$ and $\set{(f_t,U_t)}_{t\in [0,T]} \in \GCE_T(\mu_0)$ for some $\mu_0  \in \cP_2^{\cm}(\R)$. Assume that
\begin{align}
    \int_0^T  \cA(f_t, U_t)^\frac12 \dx{t} < \infty, \qquad \text{and} \qquad \int_0^T \! \cA(f_t, U_t)^\frac12 \cD(f_t)^\frac12 \dx{t} <\infty \, ,
\label{eq:ADbound}
\end{align} 
where $\cA: \cP(\R) \times \cM(\Rdiag) \to (-\infty,+\infty]$ is the action, as defined in~\cref{def:action-functional}, and $\cD: \cP(\R) \to (-\infty,+\infty]$ is the dissipation defined in~\eqref{eq:dissipation}.

Then, the following properties hold:
\begin{tenumerate}
\item $\sup_{t \in [0,T]} \cE(f_t) < \infty$.
\item For any $0\leq s \leq t \leq T$
\begin{align}
    \cE(f_t)- \cE(f_s) &= \int_s^t \!\iint_{\Rdiag} \! \tilde{\nabla}\frac{\delta \cE}{\delta f}(v,v_*) \sigma_e(|v-v_*|) \dx U_\tau(v,v_*) \, \dx{\tau}.
\end{align}
\end{tenumerate}
\end{lem}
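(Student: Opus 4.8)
The plan is to establish the chain rule by a regularisation-in-time argument combined with a truncation of the quadratic energy density, since the kinetic energy $\cE(f) = \frac12\int v^2\dx f$ is not bounded and $v\mapsto v^2/2$ is not an admissible test function in~\eqref{eq:GCE:weak_form}.

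First I would prove part~(a). By Lemma~\ref{lem:finite_action:GCE} the first moment is controlled: the bound~\eqref{e:m1:action} together with the first hypothesis in~\eqref{eq:ADbound} gives $\sup_{t}m_1(f_t)<\infty$, so in particular $f_t\in\cP_1(\R)$ for all $t$. To bootstrap to the second moment, I would test~\eqref{eq:GCE:weak_form} against a smooth truncation $\varphi_R$ of $v^2/2$ (e.g. $\varphi_R = \chi_R(v)\,v^2/2$ with $\chi_R$ a cutoff that is $1$ on $[-R,R]$), noting that $\tilde\nabla\varphi_R(v,v_*) = \varphi_R'(v_*)-\varphi_R'(v)$ is bounded by something like $C(|v|+|v_*|)$ on the support and by Lipschitz estimates elsewhere, so that $|\tilde\nabla\varphi_R(v,v_*)|\le C(|v|+|v_*|)$ uniformly in $R$; then
\begin{align*}
    \int_0^t\!\!\iint_{\Rdiag}\!|\tilde\nabla\varphi_R|\,\sigma_e(|v-v_*|)\dx|U_s|\dx s
    &\le C\int_0^t\!\!\iint_{\Rdiag}\!(|v|+|v_*|)|v-v_*|\,|\hat U_s|\dx(f_s\otimes f_s)\dx s\\
    &\le C\int_0^t \cA(f_s,U_s)^{\frac12}\Big(\!\iint (|v|+|v_*|)^2|v-v_*|\dx(f_s\otimes f_s)\Big)^{\!\frac12}\!\dx s,
\end{align*}
and the inner integral is bounded by $C\,m_2(f_s)^{1/2}m_1(f_s)$ (splitting $(|v|+|v_*|)^2|v-v_*|\lesssim(|v|+|v_*|)^3$ and using Hölder). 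Plugging this into the difference $\int\varphi_R\dx f_t - \int\varphi_R\dx f_0$ and using $\cE(f_0)<\infty$ gives, after a Grönwall-type argument in $M(t):=\sup_{s\le t}m_2(f_s)^{1/2}$, that $\sup_t\cE(f_t)<\infty$, using the second hypothesis of~\eqref{eq:ADbound} to absorb $\int_0^T\cA^{1/2}\cD^{1/2}$ (note $\cD(f_s)^{1/2}\asymp(\iint|v-v_*|^3\dx(f_s\otimes f_s))^{1/2}\lesssim m_1(f_s)^{1/2}$ is controlled). One has to be a little careful to make the Grönwall closing argument rigorous — this is essentially the same truncation estimate one needs in~(b).

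For part~(b), with (a) in hand, I would pass to the limit $R\to\infty$ in the already-integrated weak form
\begin{align*}
    \int\varphi_R\dx f_t - \int\varphi_R\dx f_s = \int_s^t\!\!\iint_{\Rdiag}\!\tilde\nabla\varphi_R(v,v_*)\,\sigma_e(|v-v_*|)\dx U_r(v,v_*)\dx r.
\end{align*}
The left side converges to $\cE(f_t)-\cE(f_s)$ by dominated convergence using $\sup_t m_2(f_t)<\infty$. For the right side, $\tilde\nabla\varphi_R \to \tilde\nabla(v^2/2)(v,v_*) = v_*-v = -(v-v_*)$ pointwise, and one needs a dominating function: $|\tilde\nabla\varphi_R(v,v_*)|\,\sigma_e(|v-v_*|)|\hat U_r|\le C(|v|+|v_*|)|v-v_*|\,|\hat U_r|$, whose integral against $\dx(f_r\otimes f_r)\dx r$ is finite by the Cauchy–Schwarz step above together with~\eqref{eq:ADbound} and $\sup m_2<\infty$. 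Dominated convergence then yields
\begin{align*}
    \cE(f_t)-\cE(f_s) = -\int_s^t\!\!\iint_{\Rdiag}\!(v-v_*)\,\sigma_e(|v-v_*|)\dx U_r(v,v_*)\dx r = \int_s^t\!\!\iint_{\Rdiag}\!\tilde\nabla\tfrac{\delta\cE}{\delta f}(v,v_*)\,\sigma_e(|v-v_*|)\dx U_r(v,v_*)\dx r,
\end{align*}
since $\frac{\delta\cE}{\delta f} = v^2/2$ and $\tilde\nabla(v^2/2)(v,v_*) = v_*-v = -(v-v_*)$, which is the claim.

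The main obstacle is the non-integrability at infinity: neither $v^2/2$ nor its nonlocal gradient $v_*-v$ is bounded, so the whole argument hinges on propagating a second-moment bound along the curve and then producing a uniform (in $R$) integrable dominating function for the flux term. The delicate point is that the natural estimate produces $(|v|+|v_*|)|v-v_*|$, which is cubic in velocities, so one must trade one power via $\cA^{1/2}$ and Cauchy–Schwarz and keep track that the remaining integral is only a second-moment-type quantity — exactly at the threshold of what $\cP_2^{\cm}$ and hypothesis~\eqref{eq:ADbound} provide. Everything else (the time-integrated weak form from~\eqref{eq:continuous-repr}, the narrow continuity, measurability) is already available from the earlier propositions.
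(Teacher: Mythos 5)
Your overall strategy---truncate $v^2/2$, insert the truncation into the time-integrated weak form, apply Cauchy--Schwarz against the action, and pass to the limit by dominated convergence---is the same as the paper's, but the quantitative estimates you run it with contain a genuine gap. By bounding $|\tilde\nabla\varphi_R(v,v_*)|\le C(|v|+|v_*|)$ you discard the cancellation between $\varphi_R'(v)$ and $\varphi_R'(v_*)$, and the resulting inner integral $\iint_{\Rdiag}(|v|+|v_*|)^2|v-v_*|\dx(f_s\otimes f_s)$ is a \emph{third}-moment quantity, comparable to $m_3(f_s)+m_2(f_s)m_1(f_s)$; it is not bounded by $C\,m_2(f_s)^{1/2}m_1(f_s)$ as you claim (the two sides do not even have the same homogeneity in velocity), and likewise the assertion $\cD(f_s)^{1/2}\lesssim m_1(f_s)^{1/2}$ is false, since $\cD$ is itself cubic in velocity. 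Neither $m_3(f_s)$ nor $\int_0^T\cD(f_s)\dx s$ is controlled by $\mu_0\in\cP_2^{\cm}(\R)$ and~\eqref{eq:ADbound} (only the product $\cA^{1/2}\cD^{1/2}$ is assumed integrable), so your Gr\"onwall closure for (a) and your dominating function for (b) do not follow as written. (One could partially repair this via Proposition~\ref{prop:GCE:centre_mass}: the centre of mass stays zero along the curve, so Jensen gives $m_3(f_s)\le\tfrac{4}{1-e}\cD(f_s)$ and the offending term reorganises into $\cA^{1/2}\cD^{1/2}$ plus a piece amenable to Gr\"onwall---but that is extra machinery and is not what you wrote.)

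The paper avoids all of this by choosing the truncation so that $\varphi_R'$ is $1$-Lipschitz uniformly in $R$: it takes $\varphi_R(v)=v^2/2$ for $|v|\le R$, extended with linear growth, so that $\varphi_R'$ is the identity truncated at $\pm R$ and hence $|\tilde\nabla\varphi_R(v,v_*)|\le|v-v_*|$ for every $R$. Cauchy--Schwarz then bounds the flux term directly by $\int_0^T\cA(f_t,U_t)^{1/2}\cD(f_t)^{1/2}\dx t$, which is exactly the second hypothesis of~\eqref{eq:ADbound}; part (a) follows from this uniform-in-$R$ bound by monotone convergence, and part (b) by dominated convergence, with no second-moment bootstrap, no Gr\"onwall argument, and no use of the centre of mass. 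Note that your own cutoff $\chi_R(v)v^2/2$ in fact also satisfies $\sup_R\|\varphi_R''\|_\infty<\infty$ (since $|\chi_R'|\lesssim R^{-1}$ and $|\chi_R''|\lesssim R^{-2}$ on $|v|\le 2R$), so the bound $|\tilde\nabla\varphi_R|\lesssim|v-v_*|$ was available to you; the gap is not the choice of cutoff but the failure to exploit this uniform Lipschitz property, which is precisely what makes $\int_0^T\cA^{1/2}\cD^{1/2}\dx t$ the correct dominating quantity.
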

\begin{proof}

We define a globally Lipschitz approximation of $|v|^2/2$ which we can use as a test function in the weak formulation of \eqref{eq:gce} by Remark~\ref{rem:w1inf}. Let
\begin{align}
    \label{eq:really_awesome_truncation}
    \varphi_R(v) :=
    \left\{
    \begin{array}{ll}
         \ds v^2/2, &  v\in [0,R],\\
         R^2/2 + R (v-R), & v \in [R, \infty),
    \end{array}
    \right.
\end{align}
and extend it to $\R$ by setting $\varphi_R(v) = \varphi_R(-v)$ for $v\in (-\infty, 0)$. 
Note, that this choice of test function also satisfies the following condition
\begin{align}
    \abs*{\frac{\varphi_R'(v) - \varphi_R'(v_*)}{v-v_*}} \leq 1, 
\end{align}
which we will exploit in the subsequent analysis. For any weak solution of~\eqref{eq:gce}, $\set{(f_t, U_t)}_{t\in[0,T]}$, there holds~\eqref{eq:GCE:weak_form}, i.e.,
\begin{align}
    \int_\R \varphi(v) \dx \tilde f_{T}(v) - \int_\R \varphi(v) \dx \tilde f_{0}(v) = \int_0^T \iint_{\Rdiag} \tilde\nabla \varphi(v,v_*) \sigma_e(|v-v_*|) \dx U_t(v,v_*)\dx{t},
\end{align}
for any regular test function, $\varphi \in C_c^1(\R)$. In particular, choosing $\varphi = \varphi_R$, with $\varphi_R$ as in  \eqref{eq:really_awesome_truncation}, we have
\begin{align}
    \label{eq:weak_form_truncd}
    \int_\R \varphi_R(v) \dx \tilde f_{T}(v) \!-\! \int_\R \varphi_R(v) \dx \tilde f_{0}(v) = \int_0^T\!\! \iint_{\Rdiag} \tilde\nabla \varphi_R(v,v_*) \sigma_e(|v-v_*|) \dx U_t(v,v_*)\dx t,
\end{align}
where we can estimate the right-hand side as follows:
\begin{align}
    \MoveEqLeft\int_0^T \iint_{\Rdiag} \tilde \nabla \varphi_R (v,v_*)  \sigma_e(|v-v_*|) \dx{U_t}(v,v_*)\dx{t} \\
    &= \int_0^T\iint_{\Rdiag} \tilde \nabla \varphi_R (v,v_*) \sigma_e(|v-v_*|) \hat{U}_t(v,v_*) \dx (f_t\otimes f_t)(v,v_*)\dx{t} \\
    &\leq \int_0^T \bra*{\iint_{\Rdiag} \sigma_e(|v-v_*|) |\hat U_t(v,v_*)|^2  \dx(f_t\otimes f_t)(v,v_*)}^{\frac12}\\
    &\qquad \times \bra*{\iint_{\Rdiag} \abs*{\tilde \nabla \varphi_R}^2 \sigma_e(|v-v_*|) \dx (f_t\otimes f_t)(v,v_*)}^{\frac12} \dx t\\
    &=\int_0^T \cA(f_t, U_t)^{\frac12}  \bra*{\iint_{\Rdiag} \abs*{\tilde \nabla \varphi_R}^2 \sigma_e(|v-v_*|) \dx (f_t \otimes f_t)(v,v_*)}^{\frac12} \dx t\\
    &=\int_0^T \cA(f_t, U_t)^{\frac12}  \bra*{\iint_{\Rdiag} \abs*{\frac{\varphi_R'(v) - \varphi_R'(v_*)}{v-v_*}}^2 |v-v_*|^2 \sigma_e(|v-v_*|) \dx (f_t \otimes f_t)(v, v_*)}^{\frac12} \!\!\!\dx t\\
    &\leq \int_0^T \cA(f_t, U_t)^\frac12 \cD(f_t)^\frac12\dx t.
\end{align}
Hence, the right-hand side is uniformly integrable and due to the pointwise convergence of $\varphi_R$ we may pass to the limit $R\to\infty$ in the weak form,  \eqref{eq:weak_form_truncd}, due to Lebesgue's dominated convergence theorem. Hence we get
\begin{align}
    \cE(f_T)- \cE(f_0) &= \int_0^T \!\iint_{\Rdiag} \! \tilde{\nabla}\frac{\delta \cE}{\delta f}(v,v_*) \sigma_e(|v-v_*|) \dx \hat U_t(v,v_*) \, \dx f(v) \dx f(v_*)\dx{t},
\end{align}
as claimed in the statement.

As the test function $\varphi_R$ in~\eqref{eq:really_awesome_truncation} has linear growth at infinity, we can use it in the weak formulation in~\eqref{eq:GCE:weak_form} by Remark~\ref{rem:extension:test-functions}, i.e.,
\begin{align}    \label{eq:energy_dissipation_rig}
\begin{split}
    \frac{\dx{}}{\dx{t}} \int_{\R} \varphi_R(v) \dx{f_t}(v)  &= - \frac{1-e}{4}\iint_{\Rdiag}  |v-v_*| \tilde \nabla\varphi_R (v,v_*)  (v_* - v) \dx{f_t}(v) \dx{f_t}(v_*)\, .
\end{split}
\end{align}
 By expanding the definition of $\tilde\nabla\varphi_R$ from~\eqref{eq:nl_grad} and using the short-hand notation
\begin{align}
    \dx g(v,v_*) := |v-v_*| \bra*{\varphi_R'(v_*)-\varphi_R'(v)}  (v_* - v) \dx (f\otimes f)(v, v_*),
\end{align}
we have
\begin{align}
     \frac{\dx{}}{\dx{t}} \int_{\R} \varphi_R(v) \dx{f_t}(v)  &  = - \frac{1-e}{4}\left(\mathcal{I}_1 + \ldots  + \mathcal{I}_9\right),
\end{align}
with 
\begin{align}
     \mathcal{I}_1 = \frac12 \int_{-\infty}^{-R} \int_{-\infty}^{-R} \dx g(v,v_*), \quad
     \mathcal{I}_2 = \frac12 \int_{-\infty}^{-R} \int_{-R}^R \dx g(v,v_*), \quad \mathcal{I}_3 = \frac12 \int_{-\infty}^{-R} \int_{R}^\infty \dx  g(v,v_*),
\end{align}
and
\begin{align}
     \mathcal{I}_4 = \frac12 \int_{-R}^{R} \int_{-\infty}^{-R} \dx g(v,v_*), 
     \quad 
     \mathcal{I}_5 = \frac12 \int_{-R}^{R} \int_{-R}^R \dx g(v,v_*),
     \quad
     \mathcal{I}_6 = \frac12 \int_{-R}^{R} \int_{R}^\infty \dx g(v,v_*),
\end{align}
as well as
\begin{align}
     \mathcal{I}_7 = \frac12 \int_{R}^{\infty} \int_{-\infty}^{-R}  \dx g(v,v_*),
     \quad
     \mathcal{I}_8 = \frac12 \int_{R}^{\infty} \int_{-R}^R \dx g(v,v_*),
     \quad
     \mathcal{I}_9 = \frac12 \int_{R}^{\infty} \int_{R}^\infty \dx  g(v,v_*).
\end{align}
It is immediately clear that $\mathcal{I}_1 = \mathcal{I}_9 = 0$, as $\tilde \nabla \varphi_R$ vanishes in the respective ranges for $v,v_*$, whence $g(v,v_*)=0$. It is easy to verify that $\mathcal{I}_j \geq 0$, for $j\neq 5$.
We expand on the argument for $\mathcal{I}_2$ and note that arguments along similar lines will allow us to treat the remaining terms. Indeed, 
\begin{align}
    \mathcal{I}_2 &=  \int_{-\infty}^{-R}\int_{-R}^R  |v - v_*| (v_* + R) (v_*-v) \dx{(f_t \otimes f_t)}(v,v_*) \geq 0,
\end{align}
since $v_* \geq -R \geq v$ in the domain of integration.
Substituting $\mathcal{I}_j\geq 0$, for $j\neq 5$, into  \eqref{eq:energy_dissipation_rig}, we get
\begin{align}
    \int_{\R} \varphi_R(v)\dx{f_t}(v) - \int_{\R} \varphi_R(v)\dx{f_s}(v) + \frac{1-e}{4} \int_s^t\int_{-R}^R\int_{-R}^R |v-v_*|^3 \dx{(f_t \otimes f_t)}(v,v_*) \leq 0,
\end{align}
having integrated in time. By the dominated convergence theorem and the finite initial kinetic energy, we obtain
\[
    \frac12 \int_{\R} |v|^2 \dx{f_t}(v) +   \frac{1-e}{4}  \int_0^t\iint_{\Rdiag} |v-v_*|^3 \dx{(f_t \otimes f_t)}(v,v_*) \leq \frac12 \int_{\R} |v|^2\dx{f_0}(v).
\]
\qedhere
\end{proof}
\begin{rem}
    \mbox{}
    \begin{enumerate}
        \item Let us highlight that the proof of the dissipation of the kinetic energy via the truncation argument using the test functions, $\varphi_R$, is absolutely independent of assumption \eqref{eq:ADbound}. Indeed, it is not too surprising that we require the kinetic energy to be dissipated along the aggregation equation regardless of the metric setting. In particular, any weak solution from Definition~\ref{def:weak-sol-gcae} satisfies \begin{equation}\label{eq:energy-dissipation}
          \cE(f_T) + \int_0^T \cD(f_t) \dx{t} \leq \cE(f_0) . 
        \end{equation}
        \item Note that the statement of the theorem is true for any absolutely continuous curve, namely $\set{f_t}_{t\in[0,T]} \in \AC([0,T];(\cP(\R),d_{\cA}))$ with $f_0\in \cP_2^{\cm}(\R)$ and $\int_0^T \cD(f_t)\dx t < \infty$. In this case the action is always bounded and implies the existence of an associated flux, using the characterisation of absolutely continuous curves stated in Proposition \ref{prop:ACcurves}. 
    \end{enumerate}
\end{rem}
As direct consequence of the chain rule we have $\cD^\frac12$ is a strong upper gradient with respect to the distance $d_\cA$ in the sense of~\cite[Definition 1.2.1]{AGS08}
\begin{cor}\label{cor:stron-upp-gr}
For any curve $f\in \AC([0,T];(\cP(\R),d_\cA))$ with $f_0\in \cP_2^{\cm}(\R)$ it holds
\begin{equation}
    |\cE(f_t)-\cE(f_s))|\leq \int_s^t \cD(f_r)^\frac12 |f'_r| \dx r\qquad \forall\, 0\leq s\leq t\leq T,
\end{equation}
that is $\cD^\frac12$ is a strong upper gradient for $\cE$.
\end{cor}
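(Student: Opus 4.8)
The plan is to read the inequality off from the chain rule of \cref{lem:crule} and then control the resulting flux term by a single Cauchy--Schwarz estimate. Fix $f\in\AC([0,T];(\cP(\R),d_\cA))$ with $f_0\in\cP_2^{\cm}(\R)$ and $0\le s\le t\le T$; if $\int_s^t\cD(f_r)^{1/2}\abs{f'}(r)\dx r=+\infty$ there is nothing to prove, so assume this integral is finite. By \cref{prop:ACcurves}, $f$ carries a minimal flux $\set{\tilde U_r}_{r\in[0,T]}$ with $\set{(f_r,\tilde U_r)}_{r\in[0,T]}\in\GCE_T$, $\int_0^T\cA(f_r,\tilde U_r)^{1/2}\dx r<\infty$, $\dx\tilde U_r=\hat U_r\dx(f_r\otimes f_r)$ for a measurable $\hat U$, and $\cA(f_r,\tilde U_r)=\abs{f'}^2(r)$ for a.e.\ $r$. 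Since $\abs{f'}\in L^2(0,T)$, the integrability hypotheses \eqref{eq:ADbound} hold on the interval at hand (cf.\ the remark following the proof of \cref{lem:crule}), so part (b) of \cref{lem:crule} applies with this flux and gives
\[
  \cE(f_t)-\cE(f_s)=\int_s^t\iint_{\Rdiag}\tilde\nabla\frac{\delta\cE}{\delta f}(v,v_*)\,\sigma_e(\abs{v-v_*})\,\dx\tilde U_r(v,v_*)\,\dx r .
\]

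Next I would estimate the inner integral pointwise in $r$. Since $\frac{\delta\cE}{\delta f}=\tfrac12 v^2$, \cref{def:nl_grad} gives $\tilde\nabla\frac{\delta\cE}{\delta f}(v,v_*)=v_*-v$, so that $\bigl|\tilde\nabla\tfrac{\delta\cE}{\delta f}\bigr|^2=\abs{v-v_*}^2$; writing $\dx\tilde U_r=\hat U_r\dx(f_r\otimes f_r)$ and applying the Cauchy--Schwarz inequality in $L^2\bigl(\Rdiag,\sigma_e\dx(f_r\otimes f_r)\bigr)$ one obtains, for a.e.\ $r$,
\[
  \abs*{\iint_{\Rdiag}\tilde\nabla\frac{\delta\cE}{\delta f}\,\sigma_e\,\dx\tilde U_r}\le\cD(f_r)^{1/2}\,\cA(f_r,\tilde U_r)^{1/2}=\cD(f_r)^{1/2}\,\abs{f'}(r) ,
\]
where I used the definition \eqref{eq:dissipation} of $\cD$, the representation \eqref{eq:action_final_form} of the action, and $\cA(f_r,\tilde U_r)=\abs{f'}^2(r)$. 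Inserting this into the chain-rule identity and integrating over $[s,t]$ yields $\abs{\cE(f_t)-\cE(f_s)}\le\int_s^t\cD(f_r)^{1/2}\abs{f'}(r)\dx r$, which is exactly the claim; since this holds for every $0\le s\le t\le T$ and $r\mapsto\cD(f_r)^{1/2}\abs{f'}(r)$ is Borel, $\cD^{1/2}$ is by definition a strong upper gradient for $\cE$ on $(\cP(\R),d_\cA)$ in the sense of \cite[Definition~1.2.1]{AGS08}.

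I do not anticipate a genuine obstacle — the result is, as stated, a direct corollary of the chain rule — and the only point I would be careful about is the applicability of \cref{lem:crule}: one must know that $\cE(f_s)$ and $\cE(f_t)$ are finite (which follows from $f_0\in\cP_2^{\cm}(\R)$ together with part (a) of \cref{lem:crule}, or more elementarily from the truncation argument with the test functions $\varphi_R$ in its proof, which does not rely on \eqref{eq:ADbound}), and that \eqref{eq:ADbound} holds on the interval under consideration. For the minimal flux the first condition in \eqref{eq:ADbound} is $\int_0^T\abs{f'}(r)\dx r<\infty$, which is immediate from $\abs{f'}\in L^2(0,T)$; the second reduces, via Cauchy--Schwarz, to $\int_0^T\cD(f_r)\dx r<\infty$, and in the complementary situation the inequality to be proved is again vacuous on any subinterval where it would otherwise fail.
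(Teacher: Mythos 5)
Your argument is correct and follows essentially the same route as the paper's (two-line) proof: reduce without loss of generality to the case where the right-hand side is finite, take the minimal flux with $\cA(f_r,\tilde U_r)=|f'|^2(r)$ from \cref{prop:ACcurves}, apply the chain rule of \cref{lem:crule}, and conclude by Cauchy--Schwarz. The only inessential wrinkle is your closing discussion of \eqref{eq:ADbound}: with the minimal flux its second condition on the interval $[s,t]$ is exactly the integral $\int_s^t\cD(f_r)^{1/2}|f'|(r)\dx r$ that you have already assumed finite, so the detour through $\int_0^T\cD(f_r)\dx r$ and the (not fully justified) ``complementary situation'' remark is unnecessary.
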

\begin{proof}
  Without loss of generality, we can assume $\int_s^t \cD(f_r)^\frac12 |f'_r| \dx r<\infty$, otherwise the claim is immediately true. The result follows from Lemma \ref{lem:crule} by applying Cauchy-Schwartz inequality and using the characterisation of absolutely continuous curves stated in Proposition \ref{prop:ACcurves}.
\end{proof}
We are now able to characterise weak solutions as curves of maximal slope in the sense of Definition \ref{def:CurvesMaxSlope}.
\begin{thm}[Weak solutions are curves of maximal slope]\label{thm:weak-curves}~\\
 A curve $f\in \AC\bra*{[0,T],(\cP_2^{\cm}(\R),d_{\cA})}$ is a weak solution to~\eqref{eq:AggregationEquation} 
 in the sense of Definition~\ref{def:weak-sol-gcae} if and only if $\mathcal{G}_T(f)=0$.
\end{thm}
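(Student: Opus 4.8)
The plan is to prove both implications through the energy--dissipation (De~Giorgi) identity, using the rigorous chain rule of Lemma~\ref{lem:crule} as the main engine and an elementary ``completing the square'' to close the argument. Before starting I would record two facts. First, since $\frac{\delta\cE}{\delta f}(v)=\frac12 v^2$ one has $\tilde\nabla\frac{\delta\cE}{\delta f}(v,v_*)=v_*-v$, hence $\bigl|\tilde\nabla\frac{\delta\cE}{\delta f}\bigr|^2=|v-v_*|^2$, so that whenever $(f_t,U_t^\cE)\in\GCE_T$ with $U_t^\cE$ as in Definition~\ref{def:weak-sol-gcae} we get $\cA(f_t,U_t^\cE)=\cD(f_t)$, with $\cD$ as in~\eqref{eq:dissipation}. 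Second, by Proposition~\ref{prop:ACcurves} any $f\in\AC([0,T];(\cP(\R),d_\cA))$ admits a minimal-action flux $U$ with $(f_t,U_t)\in\GCE_T$ and $\cA(f_t,U_t)=|f'|^2(t)\in L^1(0,T)$, so the first bound in~\eqref{eq:ADbound} always holds; throughout, $\cG_T(f)$ is understood via this flux (equivalently $\cG_T(f)=\inf_U\cG_T(f,U)$, the infimum being attained at the minimal-action flux since the $\cE$- and $\cD$-terms do not see $U$).

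For the implication ``$\cG_T(f)=0\Rightarrow$ weak solution'', from $\cG_T(f)=0<\infty$ and $f_0\in\cP_2^{\cm}(\R)$ one reads off $\int_0^T\cD(f_t)\dx t<\infty$ and $\int_0^T\cA(f_t,U_t)\dx t<\infty$, so Cauchy--Schwarz gives the second hypothesis of~\eqref{eq:ADbound} and Lemma~\ref{lem:crule}(b) applies. Inserting the chain-rule identity for $\cE$ into $\cG_T(f)=0$, and using $\cA(f_t,U_t)=\iint|\hat U_t|^2\sigma_e\dx(f_t\otimes f_t)$ (Lemma~\ref{prop:action}) and $\cD(f_t)=\iint\bigl|\tilde\nabla\frac{\delta\cE}{\delta f}\bigr|^2\sigma_e\dx(f_t\otimes f_t)$, I would complete the square to obtain
\[
  0=\cG_T(f)=\frac12\int_0^T\iint_{\Rdiag}\Bigl|\hat U_t(v,v_*)+\tilde\nabla\tfrac{\delta\cE}{\delta f}(v,v_*)\Bigr|^2\sigma_e(|v-v_*|)\dx(f_t\otimes f_t)(v,v_*)\dx t .
\]
Non-negativity of the integrand forces $\hat U_t=-\tilde\nabla\frac{\delta\cE}{\delta f}$ for $(f_t\otimes f_t)$-a.e.\ $(v,v_*)$ and a.e.\ $t$, i.e.\ $U_t=U_t^\cE$; since $f$ is valued in $\cP_2^{\cm}(\R)$ by hypothesis, the pair $(f_t,U_t^\cE)$ solves~\eqref{eq:gce} in the sense of Definition~\ref{def:weak-sol-gcae}.

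For the converse ``weak solution $\Rightarrow\cG_T(f)=0$'', here $(f_t,U_t^\cE)\in\GCE_T$, and the a priori bound~\eqref{eq:energy-dissipation} — valid for any weak solution independently of any metric/action assumption, as noted in the Remark after Lemma~\ref{lem:crule} — gives $\int_0^T\cD(f_t)\dx t\le\cE(f_0)-\cE(f_T)<\infty$ because $f_0\in\cP_2^{\cm}(\R)$. Using $\cA(f_t,U_t^\cE)=\cD(f_t)$ and Cauchy--Schwarz this verifies~\eqref{eq:ADbound}, so Lemma~\ref{lem:crule}(b) applied to $(f_t,U_t^\cE)$ together with $\hat U_t^\cE=-\tilde\nabla\frac{\delta\cE}{\delta f}$ yields $\cE(f_T)-\cE(f_0)=-\int_0^T\cD(f_t)\dx t$, whence, by~\eqref{def:DeGiorgino},
\[
  \cG_T(f)=-\int_0^T\cD(f_t)\dx t+\frac12\int_0^T\cA(f_t,U_t^\cE)\dx t+\frac12\int_0^T\cD(f_t)\dx t=0 .
\]
If $\cG_T(f)$ is taken with the minimal-action flux $\widetilde U$ rather than $U^\cE$, note $0\le\cG_T(f,\widetilde U)\le\cG_T(f,U^\cE)=0$, the lower bound being the chain rule plus Young's inequality and the upper bound the minimality of the action, so the conclusion is the same.

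I expect no serious obstacle: the two genuinely delicate points — the chain rule for $\cE$ along curves of finite action and the a priori dissipation inequality — are precisely what Lemma~\ref{lem:crule} and the subsequent Remark supply. What remains is bookkeeping: verifying the integrability conditions~\eqref{eq:ADbound} (via~\eqref{eq:energy-dissipation} in the forward direction and Cauchy--Schwarz in the reverse), the algebraic completing-the-square identity, and clarifying the harmless convention for which flux enters $\cG_T(f)$.
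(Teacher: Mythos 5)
Your proposal is correct and follows essentially the same route as the paper's proof: both directions hinge on the chain rule of Lemma~\ref{lem:crule}, the identity $\cA(f_t,U_t^{\cE})=\cD(f_t)$, the a priori dissipation bound~\eqref{eq:energy-dissipation}, and the completing-the-square argument with the minimal-action flux from Proposition~\ref{prop:ACcurves}. The only cosmetic difference is that in the direction ``weak solution $\Rightarrow\cG_T(f)=0$'' you derive the exact energy identity from the chain rule, whereas the paper combines the two inequalities $\cG_T(f)\le 0$ (from~\eqref{eq:energy-dissipation}) and $\cG_T(f)\ge 0$ (from the chain rule); your explicit handling of which flux enters $\cG_T$ is a welcome clarification of a point the paper leaves implicit.
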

\begin{proof}
Let $f$ be a weak solution in the sense of Definition \ref{def:weak-sol-gcae} with corresponding flux $U_t^\cE(v,v_*)$ given by~\eqref{eq:gen-cae}. 
It can be checked that $\cA(f_t,U_t^\cE)=\cD(f_t)$ and by the energy dissipation~\eqref{eq:energy-dissipation} also follows that $\cE(f_T) + \int_0^T \cD(f_t)\dx t \leq \cE(f_0) < \infty$. In particular, $\cE(f_T) - \cE(f_0) + \frac{1}{2}\int_0^T (\cA(f_t,U_t^\cE) +\cD(f_t)) \dx{t} \leq 0$, whence $\cG_T(f)\leq 0$ and $f\in \AC([0,T];(\cP(\R),d_{\cA}))$. Thus, by the chain rule Lemma~\ref{eq:formal:chain-rule}, we have that $\cG_T(f)\geq 0$. Hence, $\cG_T(f)=0$.

Let us now assume that $f\in \AC([0,T];(\cP(\R),d_{\cA}))$ satisfies $\cG_T(f)=0$.
According to Proposition \ref{prop:ACcurves} there exists a unique family $\{\dx U_t = \hat U_t \dx(f_t \otimes f_t)\}_{t\in[0,T]}$ such that $\set{(f_t, U_t)}_{t\in [0,T]}\in\GCE_T$ and  $\int_0^T\cA(f_t, U_t)\dx t<\infty$.
By the chain rule Lemma~\ref{lem:crule}, we obtain
\begin{align}
    0 &=\cG_T(f_t) = \cE(f_T) - \cE(f_0) + \frac12 \int_0^T \cA(f_t, U_t) \dx t + \frac12 \int_0^T\cD(f_t)\dx t\\
    &=\int_0^T\iint_{\Rdiag} \tilde \nabla \frac{\delta \cE}{\delta f} \sigma_e(|v_* - v|) \hat U_t(v,v_*) \sigma_e(|v_*-v|)\dx (f_t\otimes f_t)(v,v_*)\dx t \\
    &\qquad + \frac12 \int_0^T \iint_{\Rdiag} \bra*{ \abs*{\tilde \nabla \frac{\delta \cE}{\delta f}}^2 + \abs*{\hat U_t}^2} \sigma_e(|v_*-v|) \dx (f_t\otimes f_t)(v,v_*) \dx{t}\\
    &= \frac12 \int_0^T \iint_{\Rdiag} \abs*{\tilde \nabla \frac{\delta \cE}{\delta f} + \hat U_t}^2\sigma_e(|v_*-v|) \dx (f_t\otimes f_t)(v,v_*) \dx{t} .
\end{align}
Hence 
\[
    \hat U_t(v,v_*) = - \tilde \nabla \frac{\delta \cE}{\delta f}(v,v_*) = v - v_*,
\] 
which implies that $U_t =  U_t^\cE$, from~\eqref{eq:gen-cae}.
\end{proof}

To establish the existence of minimisers of the De Giorgi functional in~\eqref{def:DeGiorgino}, we have to prove lower semicontinuity of the dissipation.
\begin{prop}[Lower semicontinuity of the dissipation]\label{prop:dissipation:lsc}
  Let $\set{f^n}_{n\in \N } \subset \cP(\R)$ such that $f^n \to  f \in \cP(\R)$, then it holds
  \begin{equation}
      \liminf_{n\to \infty} \cD(f^n) \geq \cD(f) . 
  \end{equation}
\end{prop}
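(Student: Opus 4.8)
The plan is to reduce the claim to the classical fact that integrating a nonnegative lower semicontinuous function is lower semicontinuous along narrowly convergent sequences of probability measures, once the convergence $f^n\to f$ has been tensorised.

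First I would rewrite the dissipation, using the definition of $\sigma_e$ in~\eqref{eq:sigma_e}, as
\[
  \cD(f)=\frac{1-e}{4}\iint_{\Rdiag}\abs{v-v_*}^3\dx(f\otimes f)(v,v_*)=\frac{1-e}{4}\int_{\R^2}g(v,v_*)\dx(f\otimes f)(v,v_*),
\]
where $g(v,v_*):=\abs{v-v_*}^3$ is nonnegative and continuous on all of $\R^2$; the passage from $\Rdiag$ to $\R^2$ is harmless since $g$ vanishes on the diagonal, to which $f\otimes f$ may a priori assign positive mass.

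Next I would use that $f^n\to f$ in $\cP(\R)$ implies $f^n\otimes f^n\to f\otimes f$ in $\cP(\R^2)$: the family $\set{f^n}_n$ is tight, hence so is $\set{f^n\otimes f^n}_n$, and $\int\phi_1(v)\phi_2(v_*)\dx(f^n\otimes f^n)\to\int\phi_1(v)\phi_2(v_*)\dx(f\otimes f)$ for all $\phi_1,\phi_2\in C_b(\R)$, which pins down the narrow limit (this is also recorded in \cite{AGS08}). Then, for the nonnegative continuous function $g$, I would introduce the bounded truncations $g_k:=\min\set{g,k}\in C_b(\R^2)$, which increase pointwise to $g$. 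For each fixed $k$, narrow convergence gives $\int_{\R^2}g_k\dx(f^n\otimes f^n)\to\int_{\R^2}g_k\dx(f\otimes f)$, and since $g\ge g_k$,
\[
  \liminf_{n\to\infty}\int_{\R^2}g\dx(f^n\otimes f^n)\ge\lim_{n\to\infty}\int_{\R^2}g_k\dx(f^n\otimes f^n)=\int_{\R^2}g_k\dx(f\otimes f).
\]
Letting $k\to\infty$ and applying the monotone convergence theorem on the right yields $\liminf_n\cD(f^n)\ge\cD(f)$ after multiplying by $\tfrac{1-e}{4}$.

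No genuine obstacle arises here; the only points requiring a word of care are the tensorisation of narrow convergence (which may simply be quoted) and the cubic growth of $g$, which prevents a direct use of bounded test functions but is handled precisely by the monotone truncation $g\wedge k$ above, rather than by any uniform-integrability argument.
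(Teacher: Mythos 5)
Your proof is correct and follows essentially the same route as the paper: truncate the unbounded cubic integrand to a bounded continuous function, pass to the limit in $n$ using narrow convergence of the tensorised measures $f^n\otimes f^n\to f\otimes f$, and then remove the truncation by monotone convergence. The only cosmetic difference is that you truncate by value, $g\wedge k$, whereas the paper uses a cut-off $\varphi_R(\abs{v-v_*})$ in the relative velocity; both serve the same purpose, and your explicit remarks on tensorisation and on the diagonal carrying no mass of the integrand are welcome details the paper leaves implicit.
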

\begin{proof}
  We consider a cut-off away from the diagonal. Let $\varphi_R(r)\in C_c^1(\R)$ be such that $\varphi_R(r) = 1$ for $r\in [-R,R]$ and $\varphi_R(r)= 0$ for $\abs{r}\geq 2R$, then we have by positivity of the integrand in $\cD(f^n)$ the estimate
  \begin{equation}
      \cD(f^n) \geq \iint_{\Rdiag} \varphi_R(|v-v_*|) |v-v_*|^2 \sigma_e(|v-v_*|) \dx (f^n \otimes f^n)(v,v_*).
  \end{equation}
Hence, the proof is concluded by letting $n\to\infty$ first, and via monotone convergence for $R\to\infty$.
\end{proof}

\subsection{Stability and existence by particle approximation}

To discuss the existence of curves of maximal slope, we proceed by a strategy similar to showing existence of solutions to the aggregation equation by finite-dimensional approximations, cf.~\cite{CDFLS11}. %

Let us first summarise the given compactness and lower semicontinuity statements for the objects in the definition of the De Giorgi functional, cf. \eqref{def:DeGiorgino}, which provide the stability of curves of maximal slope in our setting.
By combining the lower semicontinuity of the action in Proposition~\ref{prop:lsc-action} and the lower semicontinuity of the dissipation in Proposition~\ref{prop:dissipation:lsc}, as well as noting that the kinetic energy~\eqref{eq:KineticEnergy} is lower semicontinuous with respect to narrow convergence due to the convexity of the integrand, we obtain the stability of curves of maximal slope.
\begin{thm}[Stability of curves of maximal slope]\label{thm:stability}
  Let the sequence $\set{f^n}_{n\in\N} \subset \AC([0,T],(\cP_2^{\cm}(\R),d_{\cA}))$ be such that $\sup_{n\in \N}\cG(f^n)< \infty$ and $\cE(f^n_0)\to \cE(f_0)$ with $f^n_0\to f_0$, then there exists some $f\in \AC([0,T],(\cP_2^{\cm}(\R),d_{\cA}))$ such that $f_t^n\to f_t$, for a.e.~$t\in [0,T]$ and
  \begin{equation}
      \liminf_{n\to \infty} \cG(f^n) \geq \cG(f) . 
  \end{equation}
\end{thm}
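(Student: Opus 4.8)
The plan is to reduce the statement to the compactness result for $\GCE_T$ (\cref{prop:compactCRE}) together with the separate lower semicontinuity properties of the three ingredients of $\cG_T$, after first squeezing uniform bounds on action, dissipation and moments out of the hypothesis $\sup_n \cG_T(f^n)<\infty$. First I would pass to a subsequence (not relabelled) along which $\cG_T(f^n)\to\liminf_n\cG_T(f^n)$, and for each $n$ invoke \cref{prop:ACcurves} to get the unique minimal flux $U^n$ with $(f^n_t,U^n_t)_{t\in[0,T]}\in\GCE_T$ and $\int_0^T\cA(f^n_t,U^n_t)\,\dx t=\int_0^T|(f^n)'|^2(t)\,\dx t$. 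Since $\cE\ge 0$ and $\cE(f^n_0)\to\cE(f_0)$, the number $E_0:=\sup_n\cE(f^n_0)$ is finite, so
\begin{align}
  \tfrac12\int_0^T\cA(f^n_t,U^n_t)\,\dx t+\tfrac12\int_0^T\cD(f^n_t)\,\dx t\le \cG_T(f^n)+\cE(f^n_0)\le \sup_n\cG_T(f^n)+E_0=:C_1,
\end{align}
which bounds the action and dissipation integrals uniformly in $n$. Together with $m_1(f^n_0)\le(2E_0)^{1/2}$ and the first-moment estimate \eqref{e:m1:action} of \cref{lem:finite_action:GCE} integrated on $[0,T]$, this also bounds $\sup_n\int_\R|v|\,\dx(f^n_0+f^n_T)(v)$, so the hypotheses of \cref{prop:compactCRE} are met.

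With these bounds, \cref{prop:compactCRE} gives, along a further subsequence, a limit curve $f$ with $f^n_t\to f_t$ in $\cP(\R)$ for every $t\in[0,T]$, a flux $U\in L^1(0,T;\cM(\Rdiag))$ with $U^n\to^c U$ such that $(f_t,U_t)_{t\in[0,T]}\in\GCE_T(\mu_0,\mu_T)$ with $\mu_0=f_0$ (by uniqueness of the narrow limit and the hypothesis $f^n_0\to f_0$), and the lower semicontinuity $\liminf_n\int_0^T\cA(f^n_t,U^n_t)\,\dx t\ge\int_0^T\cA(f_t,U_t)\,\dx t$. As this bounds $\int_0^T\cA(f_t,U_t)\,\dx t$, Cauchy--Schwarz on the finite interval $[0,T]$ yields $\int_0^T\cA(f_t,U_t)^{1/2}\,\dx t<\infty$, so $f\in\AC([0,T];(\cP(\R),d_\cA))$. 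To land in the right space I would check $f_t\in\cP_2^{\cm}(\R)$ for all $t$: the centre of mass is zero because $f_0\in\cP_2^{\cm}(\R)$ (the uniform bound on $\cE(f^n_0)$ makes $|v|$ uniformly integrable, transferring $\int v\,\dx f^n_0=0$ to the limit) and \cref{prop:GCE:centre_mass} preserves it along the curve; and $\sup_t\cE(f_t)<\infty$ since $\cE$ is narrowly lower semicontinuous (convex integrand) while, by the chain rule \cref{lem:crule} and Cauchy--Schwarz, $\cE(f^n_t)\le\cE(f^n_0)+\int_0^T\cA(f^n_s,U^n_s)^{1/2}\cD(f^n_s)^{1/2}\,\dx s\le E_0+2C_1$ uniformly in $n$ and $t$.

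Finally I would pass to the limit in $\cG_T$ term by term along the subsequence: $\liminf_n\cE(f^n_T)\ge\cE(f_T)$ by lower semicontinuity of $\cE$; $\cE(f^n_0)\to\cE(f_0)$ by hypothesis; $\liminf_n\int_0^T\cA(f^n_t,U^n_t)\,\dx t\ge\int_0^T\cA(f_t,U_t)\,\dx t$ from \cref{prop:compactCRE}; and $\liminf_n\int_0^T\cD(f^n_t)\,\dx t\ge\int_0^T\cD(f_t)\,\dx t$ by combining the pointwise estimate $\liminf_n\cD(f^n_t)\ge\cD(f_t)$ of \cref{prop:dissipation:lsc} with Fatou's lemma (legitimate since $\cD\ge 0$). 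Using superadditivity of $\liminf$ (all terms being non-negative) one then gets
\begin{align}
  \liminf_{n\to\infty}\cG_T(f^n)\ge \cE(f_T)-\cE(f_0)+\tfrac12\int_0^T\cA(f_t,U_t)\,\dx t+\tfrac12\int_0^T\cD(f_t)\,\dx t,
\end{align}
and replacing $U$ by the minimal flux of \cref{prop:ACcurves} (which only lowers the action and leaves $\cD$ unchanged) identifies the right-hand side with $\cG_T(f)$ as in \eqref{def:DeGiorgino}. Convergence for every $t$ in particular gives convergence for a.e.\ $t$, as claimed.

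The main obstacle I anticipate is not any isolated estimate but the bookkeeping around the De Giorgi functional itself: $\cG_T(f)$ is to be read with the \emph{minimal} flux (equivalently $\int_0^T|f'|^2\,\dx t$), whereas the flux $U$ delivered by the compactness argument need not be minimal for the limit curve, so lower semicontinuity only produces $\cG_T(f)$ after the final minimisation step; relatedly, one must carefully certify that the limit curve genuinely belongs to $\AC([0,T];(\cP_2^{\cm}(\R),d_\cA))$, which is precisely where the uniform second-moment bound extracted from the chain rule is needed.
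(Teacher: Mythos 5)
Your proposal is correct and follows essentially the same route as the paper, which proves this theorem only by the preceding remark: combine the compactness of $\GCE_T$ from \cref{prop:compactCRE} with the lower semicontinuity of the action (\cref{prop:lsc-action}), of the dissipation (\cref{prop:dissipation:lsc}, via Fatou), and of $\cE$ under narrow convergence. Your write-up simply supplies the details the paper leaves implicit (uniform action/dissipation/moment bounds, membership of the limit in $\AC([0,T];(\cP_2^{\cm}(\R),d_\cA))$, and the minimal-flux bookkeeping), all of which are handled correctly.
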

Based on this stability statement for curves of maximal slope we may now construct solutions devising an approximation by particles. Let us stress that existence of minimisers for $\cG_T$ can be shown by the direct method of calculus of variations. However, this does not provide that minima are zeros of $\cG_T$.
\begin{thm}[Existence by particle approximation]\label{lem:particle}
  For any $f_0 \in \cP_2^{\cm}(\R)$, that is $\cE(f_0)<\infty$, there exists a curve of maximal slope. 
  \label{lem:exCOMS}
\end{thm}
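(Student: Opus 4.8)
The plan is to construct the curve of maximal slope as a limit of empirical (particle) measures, using two facts already available: by \cref{thm:weak-curves} every weak solution of \eqref{eq:AggregationEquation} in the sense of \cref{def:weak-sol-gcae} is a curve of maximal slope, and by \cref{thm:stability} the class of curves of maximal slope is closed under narrow convergence of initial data together with convergence of the kinetic energy. So it suffices to produce a good particle approximation for which the De Giorgi functional vanishes.

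First I would approximate $f_0\in\cP_2^{\cm}(\R)$ by a sequence of empirical measures $f_0^n=\tfrac1{N_n}\sum_{i=1}^{N_n}\delta_{x_i^{0,n}}$ with zero centre of mass such that $f_0^n\to f_0$ narrowly and $\cE(f_0^n)\to\cE(f_0)$; a standard quantisation of $f_0$ (Diracs at the barycentres of $f_0$ restricted to the cells of a fine partition, a truncation far from the origin, and an asymptotically negligible translation to restore a vanishing mean) achieves this. Recalling from \eqref{e:def:W} that $W(v)=\tfrac{1-e}{6}\abs{v}^3$, so $W'(v)=\tfrac{1-e}{2}\abs{v}v$ is locally Lipschitz with $W'(0)=0$, I would then solve the particle ODE system
\begin{align}
  \dot x_i(t)=-\frac1{N_n}\sum_{j=1}^{N_n}W'\bigl(x_i(t)-x_j(t)\bigr),\qquad x_i(0)=x_i^{0,n},\quad i=1,\dots,N_n.
\end{align}
Local existence and uniqueness are immediate; global existence follows from the a priori bound $\tfrac{\dx}{\dx t}\tfrac1{2N_n}\sum_i x_i^2=-\tfrac1{2N_n^2}\sum_{i,j}(x_i-x_j)W'(x_i-x_j)\le0$ (since $rW'(r)\ge0$), which confines the particles to a fixed compact set, while $\tfrac{\dx}{\dx t}\tfrac1{N_n}\sum_i x_i=0$ by antisymmetry of $W'$ keeps the centre of mass at the origin. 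Setting $f_t^n:=\tfrac1{N_n}\sum_i\delta_{x_i(t)}$, a direct computation (testing the ODE against $\varphi'$ and symmetrising in $v,v_*$) shows that $\{f_t^n\}_{t\in[0,T]}\subset\cP_2^{\cm}(\R)$ is a weak solution of \eqref{eq:AggregationEquation} in the sense of \cref{def:weak-sol-gcae}, with associated flux $\dx U_t^{n,\cE}=(v-v_*)\dx(f_t^n\otimes f_t^n)$ as in \eqref{eq:gen-cae}, the integrability condition \eqref{eq:integrability-cond} holding because $f_t^n$ has uniformly compact support.

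Next I would check that $f^n\in\AC([0,T];(\cP_2^{\cm}(\R),d_\cA))$: one has $\cA(f_t^n,U_t^{n,\cE})=\cD(f_t^n)$, the energy dissipation inequality \eqref{eq:energy-dissipation} (which, by the remark after \cref{lem:crule}, holds unconditionally for weak solutions) yields $\int_0^T\cD(f_t^n)\dx t\le\cE(f_0^n)<\infty$, and Cauchy--Schwarz then bounds $\int_0^T\cA(f_t^n,U_t^{n,\cE})^{1/2}\dx t$, so \cref{prop:ACcurves} applies. Hence \cref{thm:weak-curves} gives $\cG_T(f^n)=0$ for every $n$. Applying \cref{thm:stability} to $\{f^n\}$ — for which $\sup_n\cG_T(f^n)=0<\infty$, $\cE(f_0^n)\to\cE(f_0)$ and $f_0^n\to f_0$ — produces a limit curve $f\in\AC([0,T];(\cP_2^{\cm}(\R),d_\cA))$ with $f_t^n\to f_t$ for a.e.\ $t$ and $0=\liminf_n\cG_T(f^n)\ge\cG_T(f)\ge0$, the final inequality being the chain-rule lower bound of \cref{lem:crule}. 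Therefore $\cG_T(f)=0$ and $f$ is a curve of maximal slope starting from $f_0$.

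The main obstacle I expect lies in the construction step rather than the limit passage: producing a particle approximation that simultaneously converges narrowly, converges in kinetic energy, and respects the constraint set $\cP_2^{\cm}(\R)$, together with the verification that the empirical measure is genuinely a weak solution in the sense of \cref{def:weak-sol-gcae} — in particular that $W'(0)=0$ makes the self-interaction term harmless and that monotonicity of the kinetic energy upgrades local to global existence of the particle flow. Once these points are in place, the remainder is a direct invocation of \cref{thm:weak-curves} and \cref{thm:stability}, with no further analytic work required.
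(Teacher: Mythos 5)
Your proposal is correct and follows essentially the same route as the paper: approximate $f_0$ by empirical measures (the paper does this in $d_2$ via mollification, truncation and quantisation), evolve the atoms by the same ODE system (your $W'$ formulation coincides with the paper's $\sigma_e$ formulation), observe that the empirical curves are weak solutions and hence curves of maximal slope by \cref{thm:weak-curves}, and pass to the limit via \cref{thm:stability}. Your additional care with global existence of the particle flow and with restoring the zero centre of mass merely fills in details the paper leaves implicit.
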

\begin{proof}
  The strategy is based on constructing a particle approximation of the initial measure, $f_0 \in \cP_2^{\cm}(\R)$, by arguing that there exists a sequence of empirical measures $\bra[\big]{f^n_0=\frac1n\sum_{i=1}^n \delta_{v_i^n(0)}}_{n \in \N}$ such that
\begin{align}
    d_2(f_0, f^n_0) \to 0 , \qquad\text{as } n\to \infty . 
\end{align}
Taking the existence of $f^n_0$ for granted, we can then follow the atoms of the initial empirical measure $f^n_0$ along the solution of the associated system of ordinary differential equations
\begin{align}
      \frac{\dx{v}_i^n}{\dx{t}} = -\frac2n \sum_{j=1}^n \sigma_e\bra*{\abs{v_i^n(t)-v_j^n(t)}}\bra*{v_i^n(t)-v_j^n(t)},
\end{align}
whose existence is guaranteed by the classical Cauchy--Lipschitz theory.
This gives rise to a family of curves $(f^n_t)_{t\in [0,T]}$ for each $n\in \N$, which are readily verified to be weak solutions to~\eqref{eq:AggregationEquation} and, by Theorem~\ref{thm:weak-curves}, also curves of maximal slope in the sense of Definition \ref{def:CurvesMaxSlope}. In particular, this sequence of solution satisfies the a priori estimate~\eqref{eq:energy-dissipation}, and they have uniformly bounded action, thus they are curves in $\AC\bra*{[0,T],(\cP_2^{\cm}(\R),d_{\cA})}$. Moreover, since convergence in $d_2$ implies $f^n_0 \to f_0$ and convergence of second order moments, we also obtain $\cE(f^n_0) \to \cE(f_0)$. 
Hence, we can conclude the proof by applying the stability statement from Theorem~\ref{thm:stability} in the limit $n\to \infty$ and conclude
\[
  0 = \liminf_{n\to \infty} \cG_T(f^n) \geq \cG_T(f)  \geq 0.
\]
Hence the limit $f$ is also a curve of maximal slope.

\medskip

Let us now turn to the construction of the approximation $f^n_0$ of the initial measure $f_0$, which consists of three steps: mollification, truncation, and approximation by particles. Let $\eps > 0$ be arbitrary. 

\medskip
\noindent
\emph{Step 1.} In the mollification step, we find some $f_{\mathrm{ac}}^\eps \in \Leb^1(\R) \cap \cP(\R)$ such that $d_2(f_0, f_{\mathrm{ac}}^\eps) < \eps/3$, which can be easily done by mollifying $f_0$ with a smooth bump function at a suitable scale $\delta=\delta(\eps)>0$. Furthermore, we note that
\begin{align}
    \int_{\R}\abs{v}^2\dx f_{\mathrm{ac}}^\eps(v)  = &
    \int_{\R}\int_{\R} \abs{v}^2 \varphi^\delta(v-w) \dx{v} \dx{f_0}(w) 
    \leq \int_{\R} \bra[\big]{2\abs{w}^2 + 2\delta^2} \dx{f_0}(w) = 4 \cE(f_0) + 2\delta^2 .
\end{align}

\medskip
\noindent
\emph{Step 2.} We will now use the fact that the second moment control on $f_{\mathrm{ac}}^\eps$, gives us uniform tightness which allows to cut off, in a quantitative fashion, its tails.  The standard tightness estimate tells us that
\begin{align}
    \int_{[-R,R]^c} \dx{f_{\mathrm{ac}}^\eps} \leq \frac{1}{R^2} \int_{[-R,R]^c} \abs{v}^2 \dx{f_{\mathrm{ac}}^\eps} \leq \frac{4 \cE(f_0) + 2\delta^2}{R^2} \, .
\end{align}
Consider now the cut off and renormalised measure $f_{\mathrm{ac},R}^\eps= f_{\mathrm{ac}}^\eps|_{[-R,R]}/\norm{f_{\mathrm{ac}}^\eps}_{\Leb^1([-R,R])}$. Using~\cite[Theorem 6.15]{V2}, we have that
\begin{align}
    d_2(f_{\mathrm{ac}}^\eps,f_{\mathrm{ac},R}^\eps) \leq & \bra*{2\int_{\R}\abs{v}^2 \abs{f_{\mathrm{ac}}^\eps-f_{\mathrm{ac},R}^\eps}\dx{v}  }^{\frac{1}{2}} \\
    \leq &\bra*{\frac{2(1- \norm{f_{\mathrm{ac}}^\eps}_{\Leb^1([-R,R])})}{\norm{f_{\mathrm{ac}}^\eps}_{\Leb^1([-R,R])}} }^{\frac12} \bra*{4 \cE(f_0) + 2\delta^2}^{\frac12} +\sqrt{2} \int_{[-R,R]^c}\abs{v}^2\dx{f_{\mathrm{ac}}^\eps} \,.
\end{align}
It is now clear that for a fixed $\eps>0$, we can choose $R=R(\eps)>0$ such that it holds that
\begin{align}
    d_2(f_{\mathrm{ac}}^\eps,f_{\mathrm{ac},R}^\eps) <\frac\eps3.
\end{align}

\medskip
\noindent
\emph{Step 3.} Finally, we use a classical result from measure theory (for example cf.~\cite[Example 8.16 (i)]{Bog07})  that empirical measures are dense in probability measures in the narrow topology. However, since $f_{\mathrm{ac},R}^\eps$ has compact support, the sequence of empiricals we construct will necessarily converge in $d_2$. Thus, we can find a measure of the form $f^n_0:=\frac1n\sum_{i=1}^n\delta_{v_i}$ for some $n=n(\eps)$ such that 
\begin{align}
    d_2(f^n_0,f_{\mathrm{ac},R}^\eps) < \frac\eps3.
\end{align}
This completes the proof of the existence of an approximating sequence of empirical measures and hence the proof.
\end{proof}

\section*{Appendix}

\subsection*{Formal derivation of the Boltzmann equation}
We present a formal derivation of the Boltzmann equation from a gain-loss argument. For the subsequent argument, it is more useful to think of the collisions in terms of the matrix $T: \R^2 \to \R^2$ given by
\[
    T=
    \begin{pmatrix}
    \frac{1- e}{2} & \frac{1+ e}{2} \\
    \frac{1+ e}{2} & \frac{1- e}{2}
    \end{pmatrix},
\]
 which maps the pre-collisional velocities to the post-collisional velocities, i.e.,
\[
\begin{pmatrix}
v'\\
v'_*
\end{pmatrix}=T\begin{pmatrix}
v\\
v_*
\end{pmatrix}.
\]
Respectively, its inverse, given by
\[
T^{-1}=
\begin{pmatrix}
\frac{1- e^{-1}}{2} & \frac{1+ e^{-1}}{2} \\
\frac{1+ e^{-1}}{2} & \frac{1- e^{-1}}{2}
\end{pmatrix} \, ,
\]
maps post-collisional velocities to pre-collisional velocities. Note that $\det T= -e$ and $\det (T^{-1})= - e^{-1}$. 

A formal derivation for the inelastic Boltzmann equation can be obtained by describing the evolution of the velocity distribution, $f_t$, using a simple gain-loss balance argument. The density at a point $v$ in velocity space is produced by all collisions of particles with `$v$' as one of their post-collisional velocities and is destroyed by all collisions with `$v$' as one of their pre-collisional velocities.

We thus split the derivation into two parts: gain and loss. We consider an  $\eps>0$ interval $\Omega_\eps=[\nu-\eps ,\nu+\eps]$ around some velocity $\nu$ and try to obtain the rate of production of density in this interval. Formally, we can integrate over the rate of production for those pre-collisional velocities $\alpha= T^{-1}_1(v,v_*)$ and $\beta= T^{-1}_2(v,v_*)$ that  produce $v$ after collision and arrive at
\begin{align}
    \bra*{\int_{{\Omega_\eps}}  \partial_t \ft(v) \dx{v}}_{\textrm{gain}} =\iint_{\R^2} \ft(\alpha) \ft(\beta ) \sigma(\abs{\alpha -\beta})  \mathds{1}_{\Omega_\eps}(v) \dx{\alpha} \dx{\beta}.
\end{align}
The function $\sigma=\sigma(|v|)$ models the frequency of the collisions, depending on the strength of the relative velocities and referred to as the collision kernel. We now make the change of variables $(\alpha,\beta) \mapsto (v,v_*)$ to obtain
\begin{align}
    \bra*{\int_{{\Omega_\eps}} \partial_t \ft(v) \dx{v}}_{\textrm{gain}} = e \iint_{\R^2} \ft(T^{-1}_1(v,v_*)) \ft(T^{-1}_2(v,v_*)) \sigma( e^{-1}\abs{v-v_*})  \mathds{1}_{\Omega_\eps} (v) \dx{v} \dx{v_*}.
\end{align}
The loss term is simpler as we obtain
\begin{align}
    \bra*{\int_{{\Omega_\eps}}  \partial_t \ft(v) \dx{v}}_{\textrm{loss}} = \iint_{\R^2} \ft(v) \ft(v_*) \sigma(\abs{v-v_*})  \mathds{1}_{\Omega_\eps} (v) \dx{v} \dx{v_*},
\end{align}
where we have integrated over the rate of destruction over all pre-collisional velocities with one of the particles having velocity $v$. Subtracting the two, dividing by $\eps$, and passing to the limit we have
the strong form as
\begin{align}
    \partial_t f_t(v) =&  e \int_{\R} \ft(T^{-1}_1(v,v_*)) \ft(T^{-1}_2(v,v_*)) \sigma( e^{-1}\abs{v-v_*})   \dx{v_*} \\ &-\int_{\R} \ft(v) \ft(v_*) \sigma(\abs{v-v_*})    \dx{v_*}.
\end{align} 
The weak form can be obtained by testing against $\varphi \in C^\infty(\R)$ as follows
\begin{align}\label{eq:weak:T}
  \begin{split}
    \skp*{\varphi, \partial_t \ft} =&e \iint_{\Rdiag} \ft(T^{-1}_1(v,v_*)) \ft(T^{-1}_2(v,v_*)) \sigma( e^{-1}\abs{v-v_*})  \varphi(v) \dx{v} \dx{v_*}\\
    & -\iint_{\Rdiag} \ft(v) \ft(v_*) \sigma(\abs{v-v_*})  \varphi(v) \dx{v} \dx{v_*} \, .
    \end{split}
\end{align}
 We would now like to bring the collision operator into a more standard form. To this end, we relabel the gain term and
change variables back to $(v,v_*)= T^{-1}(v',v'_*)$, to obtain
\begin{align}
    \skp*{\varphi, \partial_t f_t} & =e \iint_{\Rdiag} \ft(T^{-1}_1(v',v'_*)) \ft(T^{-1}_2(v',v'_*)) \sigma( e^{-1}\abs{v'-v'_*})  \varphi(v') \dx{v'} \dx{v'_*}\\& \quad -\iint_{\Rdiag} \ft(v) \ft(v_*) \sigma(\abs{v-v_*})  \varphi(v) \dx{v} \dx{v_*}\\
    &= \iint_{\Rdiag} \ft(v) \ft(v_*) \sigma( \abs{v-v_*})  \varphi(v') \dx{v} \dx{v_*}\\
    &\quad -\iint_{\Rdiag} \ft(v) \ft(v_*) \sigma(\abs{v-v_*})  \varphi(v) \dx{v} \dx{v_*} \\
    &= \iint_{\Rdiag} \ft(v) \ft(v_*) \sigma( \abs{v-v_*})  (\varphi(v') -\varphi(v)) \dx{v} \dx{v_*} =\skp*{\varphi, Q(\ft,\ft_*)} \, .
\end{align}
One can symmetrise once more by using the transformation $v \mapsto v_*$  which also induces the transformation $v' \mapsto v'_*$. Thus, one obtains
\begin{align}
    \label{eq:wf:derivation}
    \begin{split}
    \skp*{\varphi,Q(\ft,\ft_*)} &=  \frac{1}{2}\iint_{\Rdiag} \ft(v) \ft(v_*) \sigma( \abs{v-v_*})  \bra*{\varphi(v')+ \varphi(v'_*) -\varphi(v) -\varphi(v_*)} \dx{v} \dx{v_*} \, .
    \end{split}
\end{align}

\section*{Acknowledgements}
AE, RSG, and MS would like to thank José Antonio Carrillo (Oxford) for introducing them to this fascinating topic and encouraging them to work on this problem. The authors are deeply grateful to the reviewers for their valuable comments. A large part of this work was completed while all four authors were at the Hausdorff Research Institute for Mathematics (Bonn) during the Junior Trimester Program on \emph{Kinetic Theory} and while AE, RSG, and MS were at the Institut Henri Poincar\'e (Paris) during their \emph{Research in Paris} stay. The authors are grateful to both institutes for their support and hospitality. AE was supported by the Advanced Grant Nonlocal-CPD (Nonlocal PDEs for Complex Particle Dynamics: Phase Transitions, Patterns and Synchronization) of the European Research Council Executive Agency (ERC) under the European Union’s Horizon 2020 research and innovation programme (grant agreement No. 883363). A considerable part of this work was carried out while AE was a postdoc at FAU Erlangen-N\"{u}rnberg. AE gratefully acknowledge support by the German Science Foundation (DFG) through CRC TR 154  ``Mathematical Modelling, Simulation and Optimization Using the Example of Gas Networks". AS is supported by the German Research Foundation (DFG) under Germany's Excellence Strategy EXC 2044 -- 390685587,
\emph{Mathematics M\"unster: Dynamics--Geometry--Structure}.

\section*{Availability of data and materials}

Data sharing not applicable to this article as no datasets were generated or analysed during the current study.

\bibliography{references}
\bibliographystyle{abbrv}

\end{document}